\definecolor{mydarkblue}{rgb}{0,0.08,0.45} 
\newtheorem{thm}{Theorem}
\newtheorem{lem}{Lemma}
\newtheorem{cor}{Corollary}
\newtheorem{prop}{Proposition}
\theoremstyle{definition}
\newcommand{\RS}{\mathsf{RS}}
\newcommand{\RRS}{\mathsf{RRS}}
\newcommand{\RCM}{\mathsf{RCM}}
\newcommand{\ReSync}{\mathsf{ReSync}}
\newcommand{\SpectrIn}{\mathsf{SpectrIn}}
\newcommand{\R}{\mathbb{R}}
\newcommand{\e}{\begin{equation}}
\newcommand{\ee}{\end{equation}}
\newcommand{\en}{\begin{equation*}}
\newcommand{\een}{\end{equation*}}
\newcommand{\eqn}{\begin{eqnarray}}
\newcommand{\eeqn}{\end{eqnarray}}
\newcommand{\bmat}{\begin{bmatrix}}
\newcommand{\emat}{\end{bmatrix}}
\newcommand{\btab}{\begin{tabular}}
\newcommand{\etab}{\end{tabular}}
\newcommand{\var}{\operatorname{Var}}
\newcommand{\vct}[1]{\boldsymbol{#1}}
\newcommand{\mtx}[1]{\boldsymbol{#1}}
\newcommand{\T}{\mathrm{T}}
\newcommand{\trace}{\operatorname{trace}}
\newcommand{\dist}{\operatorname{dist}}
\newcommand{\Dist}{\operatorname{Dist}}
\DeclareMathOperator*{\minimize}{\text{minimize}}
\DeclareMathOperator*{\argmin}{\text{argmin}}
\def \st {\operatorname*{subject\ to\ }}
\def \so {\operatorname*{SO}}
\newcommand{\Retr}{\operatorname{Retr}}
\newcommand{\calA}{\mathcal{A}}
\newcommand{\calE}{\mathcal{E}}
\newcommand{\calG}{\mathcal{G}}
\newcommand{\calN}{\mathcal{N}}
\newcommand{\calO}{\mathcal{O}}
\newcommand{\calP}{\mathcal{P}}
\newcommand{\calR}{\mathcal{R}}
\newcommand{\calV}{\mathcal{V}}
\newcommand{\vu}{\vct{u}}
\newcommand{\mA}{\mtx{A}}
\newcommand{\mB}{\mtx{B}}
\newcommand{\mE}{\mtx{E}}
\newcommand{\mG}{\mtx{G}}
\newcommand{\mO}{\mtx{O}}
\newcommand{\mP}{\mtx{P}}
\newcommand{\mQ}{\mtx{Q}}
\newcommand{\mR}{\mtx{R}}
\newcommand{\mS}{\mtx{S}}
\newcommand{\mU}{\mtx{U}}
\newcommand{\mV}{\mtx{V}}
\newcommand{\mW}{\mtx{W}}
\newcommand{\mX}{\mtx{X}}
\newcommand{\mY}{\mtx{Y}}
\newcommand{\mZ}{\mtx{Z}}
\newcommand{\mPhi}{\mtx{\Phi}}
\newcommand{\mPsi}{\mtx{\Psi}}
\newcommand{\mSigma}{\mtx{\Sigma}}
\newcommand{\mw}{\mtx{W}}
\newcommand{\mId}{{\bm I}}
\newcommand{\mI}{\mtx{I}}
\newcommand{\mWm}{\mtx{W}^{(m)}}
\newcommand{\mPi}{{\bf \Pi}}
\begin{document}

\title{ReSync: Riemannian Subgradient-based \\ Robust Rotation Synchronization}

\author{Huikang Liu  \\ School of Information Management and Engineering \\ Shanghai University of Finance and Economics\\ liuhuikang@shufe.edu.cn \And Xiao Li \\ School of Data Science
 \\ The Chinese University of Hong Kong, Shenzhen\\ lixiao@cuhk.edu.cn \And Anthony Man-Cho So \\ Department of Systems Engineering and Engineering Management \\ The Chinese University of Hong Kong\\ manchoso@se.cuhk.edu.hk}

\maketitle

\begin{abstract}
This work presents $\ReSync$, a Riemannian subgradient-based algorithm for solving the robust rotation synchronization problem, which arises in various engineering applications. $\ReSync$ solves a least-unsquared minimization formulation over the rotation group, which is nonsmooth and nonconvex, and aims at recovering the underlying rotations directly. We provide strong theoretical guarantees for $\ReSync$ under the random corruption setting. Specifically, we first show that the initialization procedure of $\ReSync$ yields a proper initial point that lies in a local region around the ground-truth rotations. We next establish the weak sharpness property of the aforementioned formulation and then utilize this property to derive the local linear convergence of $\ReSync$ to the ground-truth rotations. By combining these guarantees, we conclude that $\ReSync$ converges linearly to the ground-truth rotations under appropriate conditions. Experiment results demonstrate the effectiveness of $\ReSync$.
\end{abstract}

\section{Introduction}\label{sec:intro}

Rotation synchronization ($\RS$) is a fundamental problem in many engineering applications. For instance, $\RS$ (also known as ``rotation averaging") is an important subproblem of structure from motion (SfM) and simultaneous localization and mapping (SLAM) in computer vision \cite{govindu2004lie,hartley2013rotation,eriksson2019rotation}, where the goal is to compute the absolute orientations of objects from relative rotations between pairs of objects. $\RS$ has also been applied to sensor network localization \cite{yu2011angular,cucuringu2012},  signal recovery from phaseless observations \cite{alexeev2014},  digital communications \cite{so2010}, and cryo-EM imaging \cite{singer2011viewing,shkolnisky2012viewing}.  

Practical measurements of relative rotations are often \emph{incomplete} and \emph{corrupted}, leading to the problem of robust rotation synchronization ($\RRS$) \cite{martinec2007,hartley2011l1,hartley2013rotation,wang2013exact,chatterjee2017robust,shi2020message}. The goal of $\RRS$ is to reconstruct a set of ground-truth rotations $\mX_1^\star,\cdots,\mX_i^\star,\cdots, \mX_n^\star\in \so(d)$
from measurements of relative rotations represented as
\begin{equation}\label{eq:measurement model}
\mY_{ij} = \begin{cases}
	\mX_i^{\star }\mX_j^{\star \top},\quad &(i,j)\in\calA, \\
	\mO_{ij},\quad  &(i,j)\in\calE\setminus\calA,\\
	\bm{0},&(i,j)\in\calE^c,
\end{cases}
\quad 
\text{with}
\quad
(i,j)\in \begin{cases}
	\calA,\quad &\text{with ratio} \ pq, \\
	\calE\setminus\calA,\quad  &\text{with ratio} \ (1-p)q,\\
	\calE^c,&\text{otherwise}.
\end{cases}
\end{equation}
Here, $\so(d) := \big\{\mR\in\mathbb{R}^{d\times d}:\mR^\top\mR=\mId,\det(\mR)=1 \big\}$ denotes the rotation group (also known as the special orthogonal group), $\calE$ represents the indices of all available observations, $\calA$ denotes the indices of true observations, $\calA^c:=\calE\setminus\calA$ is the indices of \emph{outliers}, $\mO_{ij} \in \so(d)$ is an outlying observation, and the missing observations are set to be $\bm{0}$ by convention; see, e.g., \cite[section 2.1]{lerman2021robust}. We use $q\in(0,1)$ to denote the observation ratio and $p\in(0,1)$ to denote the ratio of true observations. 

\textbf{Related works.} 
Due to the vast amount of research in this field, our overview will necessarily focus on theoretical investigations of $\RS$.
In the case where no outliers exist in the measurement model \eqref{eq:measurement model}, i.e., $p = 1$, a natural formulation is to minimize a smooth least-squares function $\sum_{(j,j)\in \calE}\|\mX_i \mX_j^\top -\mY_{ij}\|_F^2$ over 
$\mX_i \in \so(d), \ 1\leq i \leq n$. Spectral relaxation and semidefinite relaxation (SDR) are typical approaches for addressing this problem \cite{singer2011angular,arie2012global,bandeira2013cheeger,bandeira2017tightness,bandeira2018random,rosen2019se}, where they provide strong recovery guarantees. However, these results cannot be directly applied to the corrupted model \eqref{eq:measurement model} due to the existence of outliers (i.e., $p<1$) and the sensitivity of the least-squares solution to outlying observations. 

Theoretical understanding of $\RRS$ is still rather limited. One typical setting for theoretical analysis of $\RRS$ is the random corruption model ($\RCM$); see \Cref{sec:setup}. The work \cite{wang2013exact} introduces a least-unsquared formulation and applies the SDR method to tackle it. Under the $\RCM$ and in the full observation case where $q = 1$, it is shown that the minimizer of the SDR reformulation exactly recovers the underlying Gram matrix (hence the ground-truth rotations) under the conditions that the true observation ratio $p\geq 0.46$ for $\so(2)$ (and $p\geq 0.49$ for $\so(3)$) and $n\to \infty$. In \cite{lerman2021robust}, the authors established the relationship between cycle-consistency and exact recovery and introduced a message-passing algorithm. Their method is tailored to find the corruption level in the graph, rather than recovering the ground-truth rotations directly. They provided linear convergence guarantees for their algorithm once the ratios satisfy $p^8q^{2}= \Omega(\log n/n)$ under the $\RCM$. However, it is unclear how this message-passing algorithm is related to other optimization procedures for solving the problem. Let us mention that they also provided guarantees for other compact groups and corruption settings. Following partly the framework established in \cite{lerman2021robust}, the work \cite{shi2022robust} presents an interesting nonconvex quadratic programming formulation of $\RRS$. It is shown that the global minimizer of the nonconvex formulation recovers the true corruption level (still not the ground-true rotations directly) when $p^2 q^2 = \Omega(\log n / n)$ under the $\RCM$. Unfortunately, the work does not provide a concrete algorithm that provably finds a global minimizer of the nonconvex formulation. In \cite{maunu2023depth}, the authors introduced and analyzed a depth descent algorithm for recovering the underlying rotation matrices. In the context of the $\RCM$, they showed asymptotic convergence of their algorithm to the underlying rotations without providing a specific rate. The result is achieved under the conditions that the algorithm is initialized near $\mX^\star$, $q\geq \calO(\log n /n)$, and $p\geq 1- 1/(d(d-1)+2)$. The latter requirement translates to $p\geq 3/4$ for $\so(2)$ and $p\geq 7/8$ for $\so(3)$. It is important to note, however, that the primary focus of their research lies in the adversarial corruption setup rather than the $\RCM$.


\textbf{Main contributions.} Towards tackling the $\RRS$ problem under the measurement model \eqref{eq:measurement model}, we consider the following least-unsquared formulation, which  was introduced in \cite{wang2013exact} as the initial step for applying the SDR method:
\begin{equation}
\begin{split}\label{eq:opt problem}
&\minimize_{\mX\in\R^{nd\times d}} \ f(\mX) := \sum_{(i,j)\in\calE} \|\mX_i \mX_j^\top -\mY_{ij}\|_F\\
&\st \mX_i \in \so(d), \ 1\leq i \leq n.
\end{split} 
\end{equation}
Note that this problem is \emph{nonsmooth} and \emph{nonconvex} due to the unsquared Frobenius-norm loss and the rotation group constraint, respectively. We design a \emph{\textbf{R}i\textbf{e}mannian \textbf{S}ubgradient s\textbf{ync}hronization} algorithm ($\ReSync$) for addressing problem \eqref{eq:opt problem}; see \Cref{alg:ReSync}. $\ReSync$ will first call an initialization procedure named $\SpectrIn$ (see \Cref{alg:SpectrIn}), which is a spectral relaxation method. Then, it implements an iterative Riemannian subgradient procedure. $\ReSync$ targets at directly recovering the ground-truth rotations $\mX^\star\in \so(d)^n$ rather than the Gram matrix or the corruption level. Under the $\RCM$ (see \Cref{sec:setup}), we provide the following strong theoretical guarantees for $\ReSync$:
\begin{enumerate}[label=\textup{\textrm{(S.\arabic*)}},topsep=0pt,itemsep=-0.7ex,partopsep=0ex]
\item \label{S1} \emph{Initialization}. The first step of $\ReSync$ is to call $\SpectrIn$ for computing the initial point $\mX^0$. Theoretically, we establish that $\mX^0$ can be relatively close to $\mX^\star$ depending on $p$ and $q$; see \Cref{thm:init}.
\item \label{S2} \emph{Weak sharpness}. We then establish a problem-intrinsic property of the formulation \eqref{eq:opt problem} called weak sharpness; see \Cref{thm:weak sharpness}. This property characterizes the geometry of problem \eqref{eq:opt problem} and is of independent interest.  
\item \label{S3} \emph{Convergence analysis}. Finally, we derive the local linear rate of convergence for $\ReSync$ based on the established weak sharpness property; see \Cref{thm:convergence}.
\end{enumerate}
The main idea is that the weak sharpness property in \ref{S2} helps to show \emph{linear convergence} of $\ReSync$ to $\mX^\star$ in \ref{S3}. However, this result only holds \emph{locally}. Thus, we need the initialization guarantee in \ref{S1} to initialize our algorithm in this local region and then argue that it will not leave this region once initialized. We refer to \Cref{sec:init,sec:weak sharp,sec:convergence} for more technical challenges and our proof ideas. Combining the above theoretical results yields our overall guarantee: \emph{$\ReSync$ converges linearly to the ground-truth rotations $\mX^\star$ when $p^7q^2 = \Omega (\log n/ n)$}; see \Cref{thm:main result}. 


\textbf{Notation.} Our notation is mostly standard. We use $\R^{nd\times d}\ni\mX= ( \mX_1; \ldots; \mX_n ) \in \so(d)^n$ to represent the Cartesian product of all  the variables $\mX_i\in\so(d), 1\leq i\leq n$. The same applies to the ground-truth rotations $\mX^\star = ( \mX_1^\star; \cdots; \mX_n^\star ) $. Let  $\mathcal{E}_i=\{j\mid (i,j)\in \mathcal{E}\}$, $\calA_i=\{j\mid (i,j)\in \calA\}$, and $\calA_i^c = \mathcal{E}_i \setminus \calA_i$. We also define $\calA_{ij}=\calA_i\cap\calA_j$ for simplicity. For a set $S$, we use $|S|$ to denote its cardinality. For any matrix $\mX, \mY \in \R^{nd\times d}$, we define the following distance  up to a global rotation:
\[
    \dist\left(\mX,\mY \right) = \|\mX - \mY \mR^\star \|_F, \text{ where } \mR^\star = \mathop{\arg\min}_{\mR \in \so(d)} \|\mX\mR - \mY\|_F^2 = \mathcal{P}_{\so(d)} (\mX^\top\mY).
\]
Besides, we introduce the following distances up to the global rotation $\mR^\star$ defined above:
\[
\dist_1\left(\mX,\mY \right) = \sum_{i = 1}^n \|\mX_i - \mY_i \mR^\star \|_F,  \quad \dist_{\infty} \left(\mX,\mY \right) = \max_{1 \leq i \leq n} \|\mX_i - \mY_i \mR^\star \|_F.
\]

\vspace{-0.3cm}
\section{Algorithm and Setup}\label{sec:problem and alg}
\vspace{-0.3cm}
\subsection{ReSync: Algorithm Development}\label{sec:alg}

In this subsection, we present $\ReSync$ for tackling the nonsmooth nonconvex formulation \eqref{eq:opt problem}; see \Cref{alg:ReSync}. Our algorithm has two main parts, i.e., initialization and an iterative Riemannian subgradient procedure.

\textbf{Initialization.} $\ReSync$ first calls a procedure $\SpectrIn$ (see \Cref{alg:SpectrIn}) for initialization. $\SpectrIn$ is a spectral relaxation-based initialization technique. 
$\SpectrIn$ computes the first $d$ leading unit eigenvectors of the data matrix to form $\mPhi \in \R^{nd \times d}$. We multiply $\sqrt{n}$ to those eigenvectors to ensure that its norm matches that of $\so(d)^n$. We also construct $\mPsi$, which reverses the sign of the last column of $\mPhi$ so that the determinants of $\mPhi$ and $\mPsi$ differ by a sign.
\begin{wrapfigure}{r}{0.28\textwidth} 
\vspace{-0.5mm}
\includegraphics[width=0.28\textwidth]{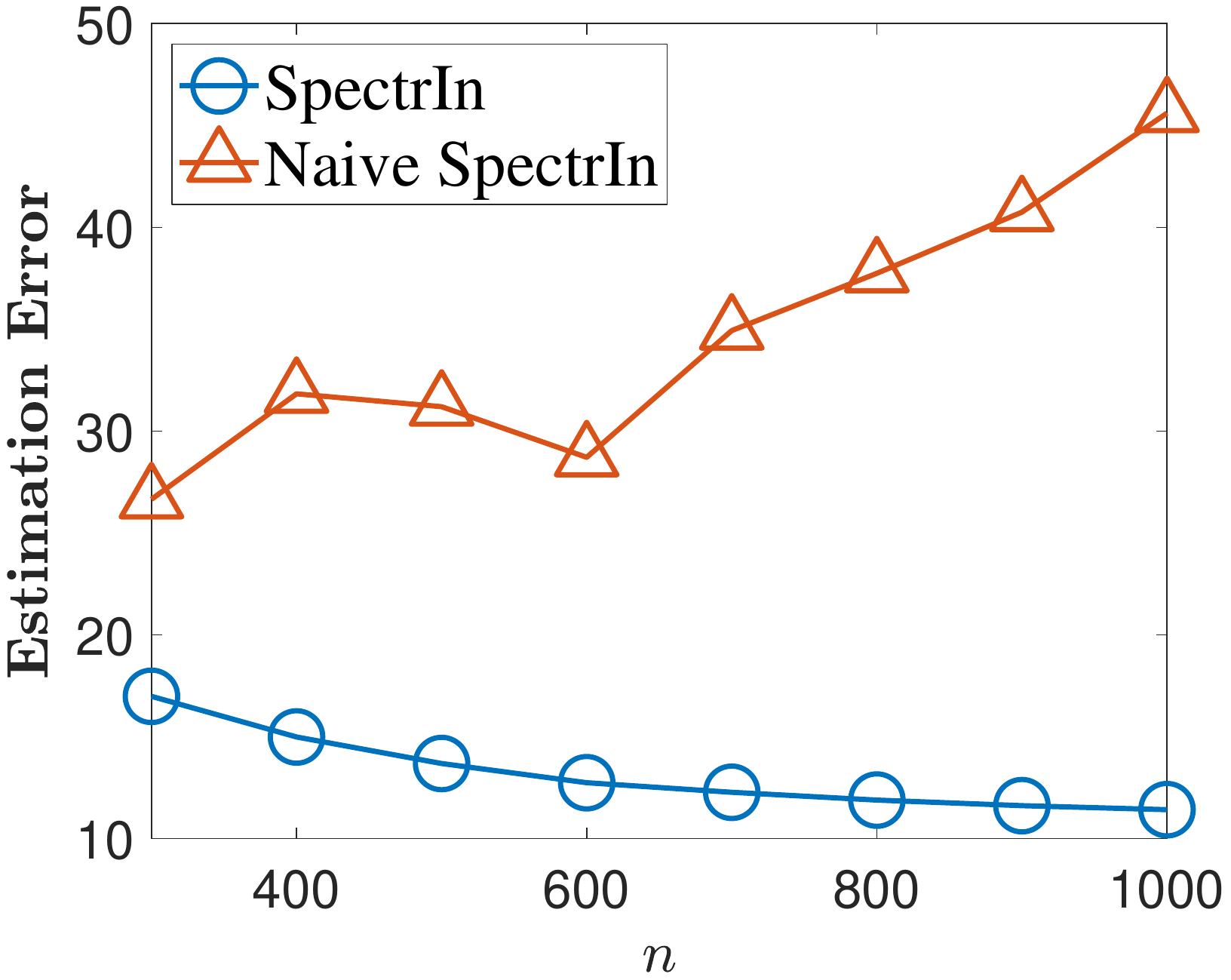}
	\caption{The average under 100 simulations of the initial distance $\dist(\mX^0,\mX^\star)$ computed by \Cref{alg:SpectrIn} versus naive spectral initialization (i.e., outputting $\mX^0 = \widetilde \mPhi$ directly) with $p = 0.2, q =0.2$ and $d=3$. \label{fig:init}}
 \vspace{-1.0cm}
\end{wrapfigure}
Then, we compute the projection of $\mPhi$ and $\mPsi$ onto $\so(d)^n$. The projection is computed in a block-wise manner, namely
\[
    \widetilde \mPhi_i = \calP_{\so(d)}(\mPhi_i), \quad  \ 1\leq i\leq n,
\]
where $\mPhi_i, \widetilde \mPhi_i \in \R^{d\times d}$ are the $i$-th block of $\mPhi$ and $\widetilde \mPhi$, respectively. The projection can be explicitly evaluated as 
\[
    \widetilde \mPhi_i = \left\{ 
    \begin{aligned}
    &\mP_i\mQ_i^\top, \quad \text{if} \ \det(\mPhi_i) >0,\\
    &\widehat\mP_i\mQ_i^\top, \quad \text{otherwise},
    \end{aligned}
    \right. \quad  \ 1\leq i\leq n.
\]
Here, $\mP_i, \mQ_i\in \R^{d\times d}$ are the left and right singular vectors of $\mPhi_i$ (with descending order of singular values), respectively, and $\widehat\mP_i$ is obtained by reversing the sign of the last column of $\mP_i$. The initial point $\mX^0$ is chosen as $\widetilde \mPhi$ or $\widetilde \mPsi$, depending on which is closer to $\so(d)^n$. 

Let us mention that the computation of $\widetilde\mPsi$ and Steps 5 - 9 in $\SpectrIn$ can practically improve the approximation error $\dist(\mX^0,\mX^\star)$. We demonstrate such a phenomenon in \Cref{fig:init}, in which ``Naive $\SpectrIn$'' refers to outputing $\mX^0 = \widetilde \mPhi$ directly in \Cref{alg:SpectrIn}.

\textbf{Riemannian subgradient update.} $\ReSync$ then implements an iterative Riemannian subgradient procedure after obtaining the initial point $\mX^0$. The key is to compute the search direction (Riemannian subgradient) $\widetilde \nabla_{\calR} f(\mX_i^k)$ and the retraction $\Retr_{\mX_i^k}(\cdot)$ onto $\so(d)$ for $1\leq i\leq n$. Towards providing concrete formulas for the Riemannian subgradient update, let us impose the  Euclidean inner product $\langle \mA,  \mB \rangle = \trace(\mA^\top \mB)$ as the inherent Riemannian metric. Consequently, the tangent space to $\so(d)$ at $\mR\in \so(d)$ is given by $\T_{\mR}:=\{\mR \mS: \mS\in \R^{d\times d}, \mS + \mS^\top = 0 \}$. The Riemannian subgradient $ \widetilde \nabla_{\calR} f(\mX_i) $ can be computed as \cite[Theorem 5.1]{yang2014optimality}
\begin{equation}\label{eq:compute Rie Grad}
\widetilde \nabla_{\calR} f(\mX_i) =  \calP_{\T_{\mX_i}}(\widetilde{\nabla}f(\mX_i)), \quad 1\leq i\leq n,
\end{equation}
where the projection can be computed as $\calP_{\T_{\mX_i}}(\mB)=\mX_i \big(\mX_i^\top\mB - \mB^\top\mX_i\big)/2$ for any $\mB\in \R^{d\times d}$ and $\widetilde{\nabla}f(\mX_i)$ is the Euclidean subgradient of $f$ with respect to the $i$-th block variable $\mX_i$.  Let us define $f_{i,j}(\mX) := \|\mX_i \mX_j^\top - \mY_{ij}\|_F$. The Euclidean subdifferential $\partial f(\mX_i)$ with respect to the block variable $\mX_i$ is given by
\[
   \partial f(\mX_i) = 2\sum_{j:(i,j)\in\calE}   \partial f_{i,j}(\mX_i), \quad \text{with} \quad  \partial f_{i,j}(\mX_i) = \begin{cases}
	\frac{\mX_i -\mY_{ij}\mX_j}{ \|\mX_i \mX_j^\top - \mY_{ij}\|_F},\ \ \text{if } \  \| \mX_i \mX_j^\top - \mY_{ij}\|_F\neq 0,\\
	 \mV \in\R^{d\times d},  \  \|\mV\|_F \leq 1, \ \ \text{otherwise.}
	\end{cases}
\]

\begin{minipage}{0.48\textwidth}
\vspace{-0.3cm}
\begin{algorithm}[H]
	\caption{$\ReSync$: Riemannian Subgradient Synchronization}
	\label{alg:ReSync}
	\begin{algorithmic}[1]
		\REQUIRE {Initialize $\mX^0 =\SpectrIn (\mY)$ (\Cref{alg:SpectrIn}), where $\mY\in\mathbb{R}^{nd\times nd}$ and its $(i,j)$-th block is $\mY_{i,j}\in \R^{d\times d}$;}
		\STATE {Set iteration count $k=0$;}
		\WHILE {stopping criterion not met}
		\STATE {Update the step size $\mu_k$;}
		\STATE{ Riemannian subgradient update:
                \[
 		        \mX_i^{k+1}=\Retr_{\mX_i^k}\left( - \mu_k \widetilde{\nabla}_{\calR}f(\mX_i^k) \right)
      	    \]
           for $1\leq i\leq n$;
           }
		\STATE {Update iteration count $k = k+1$;}
		\ENDWHILE
	\end{algorithmic}
\end{algorithm}
\end{minipage}
\hfill
\begin{minipage}{0.48\textwidth}
\vspace{-0.3cm}
\begin{algorithm}[H]
	\caption{$\SpectrIn$: Spectral Initialization}
	\label{alg:SpectrIn}
	\begin{algorithmic}[1]
		\STATE {\bf Input:} $\mY\in\mathbb{R}^{nd\times nd}$;
		\STATE Compute the $d$ leading unit eigenvectors of $\mY$: $\{\bm{u}_1,\dots,\bm{u}_d\}$;
            \STATE Set $\mPhi=\sqrt{n}[\bm{u}_1, \bm{u}_2,\dots,\bm{u}_d]\in\mathbb{R}^{nd\times d}$ and $\mPsi=\sqrt{n}[\bm{u}_1,\bm{u}_2,\dots,\bm{u}_{d-1}, -\bm{u}_d]$;
            \STATE Compute $\widetilde \mPhi = \calP_{\so(d)^n}(\mPhi)$ and $\widetilde \mPsi = \calP_{\so(d)^n}(\mPsi)$;
		\IF{ $\|\widetilde \mPhi - \mPhi\|_F \leq \|\widetilde \mPsi - \mPsi\|_F$ }
		\STATE{$\mX^0 = \widetilde \mPhi$;}
		\ELSE{}
		\STATE{$\mX^0 = \widetilde \mPsi$; }
		\ENDIF
		\STATE {\bfseries Output:} Initial point $\mX^0$.
	\end{algorithmic}
\end{algorithm}
\end{minipage}

Any element $\widetilde{\nabla}f(\mX_i)\in \partial f(\mX_i)$ is called a Euclidean subgradient. In $\ReSync$, one can choose an arbitrary subgradient $\widetilde{\nabla}f(\mX_i)\in \partial f(\mX_i)$ at $\mX_i$.

Mimicking the gradient method to update along the search direction $\widetilde \nabla_{\calR} f(\mX_i)$ provides a point $\mX_i^+ = \mX_i - \mu \widetilde \nabla_{\calR} f(\mX_i)$ on the tangent space $\T_{\mX_i}$ at $\mX_i$,  which may violate the manifold constraint ``$\mX_i^+ \in \so(d)$".  One common approach in Riemannian optimization is to employ a retraction operator to address the feasibility issue. For $\so(d)$, we can use a QR decomposition-based retraction and implement the Riemannian subgradient step as
\begin{equation}\label{eq:RSG step}
\mX_i^+ = \Retr_{\mX_i}\left(-\mu \widetilde \nabla_{\calR} f(\mX_i)\right) = \operatorname{Qr}\left(\mX_i -\mu \widetilde \nabla_{\calR} f(\mX_i)\right), \quad 1\leq i \leq n.
\end{equation}
Here, $\operatorname{Qr}(\mB)$ returns the Q-factor in the thin QR decomposition of $\mB$, while the diagonal entries of the R-factor are restricted to be positive \cite{Boumal2014}.

Finally, setting $\mX_i = \mX_i^k$, $\mX_j = \mX_j^k$ for all $j$ such that $(i,j)\in \calE$, $\mu = \mu_k$ in \eqref{eq:compute Rie Grad} and \eqref{eq:RSG step} yields a concrete implementation of Step 4 in $\ReSync$ and leads to $\so(d) \ni \mX_i^{k+1} = \mX_i^+$ for $1\leq i \leq n$. This completes the description of one full iteration of $\ReSync$. Note that the per-iteration complexity of the Riemannian subgradient procedure is $\calO(n^2 q)$, and Algorithm 2 has computational cost $\calO(n^3)$.


\vspace{-0.2cm}
\subsection{RCM Setup for Theoretical Analysis}\label{sec:setup}
\vspace{-0.2cm}
We develop our theoretical analysis of $\ReSync$ by adopting the random corruption model ($\RCM$). The $\RCM$ was previously used in many works to analyze the performance of various synchronization algorithms; see, e.g., \cite{wang2013exact,gao2019multi,lerman2021robust,shi2022robust}. Specifically, we can represent our measurement model \eqref{eq:measurement model} on a graph $\calG(\calV, \calE)$, where $\calV$ is a set of $n$ nodes representing $\{\mX_1^\star,\cdots, \mX_n^\star\}$ and $\calE$ is a set of edges containing all the available measurements $\{\mY_{i,j}, (i,j)\in \calE\}$. We assume that the graph $\calG$ follows the well-known Erd\"{o}s-R\'{e}nyi model $\mathsf{G}(n, q)$, which implies that each edge $(i,j)\in \calE$ 
is observed with probability $q$, independently from every other edge. Each edge  $(i,j)\in \calE$ is a true observation (i.e., $(i,j)\in\calA$) with probability $p$ and an outlier (i.e., $(i,j)\in\calA^c$) with probability $1-p$.
Furthermore, the outliers $\{\mO_{i,j}\}_{(i,j)\in \calA^c}$ are assumed to be independently and uniformly distributed on $\so(d)$.

\vspace{-0.3cm}
\section{Main Results}\label{sec:main}
\vspace{-0.2cm}
In this section, we present our theoretical results for $\ReSync$. Our main results are summarized in the following theorem, which states that our proposed algorithm can converge at a linear rate to the underlying rotations $\mX^\star$. Our standing assumption in this section is stated below.
\begin{tcolorbox}
\centering
All our theoretical results in this section are based on the $\RCM$; see \Cref{sec:setup}.
\end{tcolorbox}

\begin{thm}[overall]\label{thm:main result}
Suppose that the ratios $p$ and $q$ satisfy
\[
    p^7q^2 = \Omega\left(\frac{\log n}{n}\right).
\]
With probability at least $1-\calO(1/n)$, $\ReSync$ with $\mu_k = \mu_0\gamma^k$, where $\mu_0 = \Theta(p^2/n)$ and $\gamma = 1-\frac{pq}{16}$, converges linearly to the ground-truth rotations $\mX^\star$ (up to a global rotation), i.e., 
\[
    \dist\left(\mX^k,\mX^\star \right) \leq \xi_0\gamma^k,  \quad \dist_{\infty}\left(\mX^k,\mX^\star \right) \leq \delta_0 \gamma^k, \quad \forall k\geq 0.
\]
Here, $\xi_0 = \Theta(\sqrt{np^5q})$ and $\delta_0 = \Theta(p^2)$.
\end{thm}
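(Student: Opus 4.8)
The plan is to derive \Cref{thm:main result} by stitching together the three ingredients \ref{S1}--\ref{S3} that have already been announced: the spectral initialization guarantee (\Cref{thm:init}), the weak sharpness property of the formulation \eqref{eq:opt problem} (\Cref{thm:weak sharpness}), and the local linear convergence of $\ReSync$ (\Cref{thm:convergence}). No new technical device is needed beyond those results; the content of the proof is to check that their quantitative bounds are mutually compatible under the single sample complexity condition $p^7 q^2 = \Omega(\log n/n)$, and to control the failure probability by a union bound.

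Concretely, I would first invoke \Cref{thm:weak sharpness} to obtain an event $\calG_1$ of probability at least $1-\calO(1/n)$ on which \eqref{eq:opt problem} is weakly sharp around $\mX^\star$, and \Cref{thm:init} to obtain an event $\calG_2$, also of probability at least $1-\calO(1/n)$, on which $\SpectrIn$ returns a point $\mX^0$ with $\dist(\mX^0,\mX^\star) = \calO(\sqrt{np^5q})$ together with the accompanying $\dist_\infty$ control that theorem provides. Each of these holds under a sample complexity that is no stronger than $p^7 q^2 = \Omega(\log n/n)$, so both conditions are subsumed. On the intersection $\calG_1\cap\calG_2$, which still has probability at least $1-\calO(1/n)$, I would verify that $\mX^0$ lies inside the neighborhood of $\mX^\star$ in which \Cref{thm:convergence} certifies linear convergence; this amounts to comparing the initialization radii $\Theta(\sqrt{np^5q})$ and $\Theta(p^2)$ against the basin radii demanded by \Cref{thm:convergence}, with the condition $p^7 q^2 = \Omega(\log n/n)$ being precisely what absorbs the hidden constants. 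Then applying \Cref{thm:convergence} with $\mu_k = \mu_0\gamma^k$, $\mu_0 = \Theta(p^2/n)$, $\gamma = 1-\tfrac{pq}{16}$ yields $\dist(\mX^k,\mX^\star)\le\xi_0\gamma^k$ and $\dist_\infty(\mX^k,\mX^\star)\le\delta_0\gamma^k$ for all $k\ge 0$ with $\xi_0=\Theta(\sqrt{np^5q})$ and $\delta_0=\Theta(p^2)$, which is exactly the claimed statement.

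I expect the main obstacle to be the containment step: ensuring that \emph{both} the aggregate error $\dist(\mX^0,\mX^\star)$ \emph{and} the worst-block error $\dist_\infty(\mX^0,\mX^\star)$ coming out of \Cref{thm:init} are small enough to enter the basin of \Cref{thm:convergence}, and that the iterates never escape that basin — the standard ``stay-in-region'' induction that has to be threaded through the linear recursion while tracking the alignment rotation $\mR^\star$ consistently from one iterate to the next. Matching the $p$- and $q$-exponents in the initialization radii against those appearing in the basin radii is what forces the $p^7 q^2$ scaling in the hypothesis, so this bookkeeping is where the stated sample complexity is ultimately pinned down; everything else is a routine union bound and substitution of constants.
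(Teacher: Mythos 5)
Your proposal is correct and follows essentially the same route as the paper: \Cref{thm:init} places $\mX^0$ in $\mathcal{N}_F^0\cap\mathcal{N}_\infty^0$ provided $\xi_0=\calO(\sqrt{\log n}/(p\sqrt{q}))$ and $\delta_0=\calO(\sqrt{\log n}/(p\sqrt{nq}))$, and substituting the values $\delta_0=\Theta(p^2)$, $\xi_0=\Theta(\sqrt{np^5q})$ mandated by \Cref{thm:convergence} turns the binding constraint $\sqrt{\log n}/(p\sqrt q)\lesssim\sqrt{np^5q}$ into exactly $p^7q^2=\Omega(\log n/n)$, after which \Cref{thm:convergence} iterates the contraction. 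The only cosmetic difference is that you invoke \Cref{thm:weak sharpness} as a separate high-probability event; the paper leaves it absorbed inside \Cref{thm:convergence}, and the stay-in-region induction and alignment-rotation bookkeeping you flag are likewise already internal to that theorem rather than part of the proof of \Cref{thm:main result} itself.
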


The basic idea of the proof is to establish the problem-intrinsic property of weak sharpness and then use it to derive a linear convergence result. However, the result only holds locally. Thus, we develop a procedure to initialize the algorithm in this local region and argue that $\ReSync$ will not leave this region afterwards. In the remaining parts of this section, we implement the above ideas and highlight the challenges and approaches to overcoming them.

\subsection{Analysis of SpectrIn with Leave-One-Out Technique}\label{sec:init}

\begin{thm}[initialization] \label{thm:init}
Let $\mX^0$ be generated by $\SpectrIn$ (see \Cref{alg:SpectrIn}). Suppose that the ratios $p$ and $q$ satisfy
\[
   p^2q = \Omega\left(\frac{\log n}{n}\right).
\]
 Then, with probability at least $1-\calO(1/n)$, we have
\begin{align}
     \dist(\mX^0, \mX^{\star}) = \calO\left( \frac{\sqrt{\log n}}{p\sqrt{q}} \right) \quad \text{and} \quad \dist_\infty (\mX^0, \mX^{\star}) = \calO\left( \frac{\sqrt{\log n}}{p\sqrt{nq}} \right).
\end{align}
\end{thm}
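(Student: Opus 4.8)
The plan is to analyze $\SpectrIn$ by comparing the leading eigenspace of the data matrix $\mY$ to that of its expectation, and then controlling how the block-wise projection onto $\so(d)^n$ interacts with this eigenspace approximation. First I would write $\mY = \E[\mY] + (\mY - \E[\mY])$ and compute $\E[\mY]$ explicitly under the $\RCM$: the true observations appear with probability $pq$, and since the outliers $\mO_{ij}$ are uniform on $\so(d)$ they have mean zero (for $d\geq 2$, $\E[\mO_{ij}] = \bm 0$), so the off-diagonal $(i,j)$-blocks of $\E[\mY]$ equal $pq\,\mX_i^\star\mX_j^{\star\top}$. Hence $\E[\mY] = pq\,\mX^\star\mX^{\star\top} + (\text{diagonal correction})$, whose top-$d$ eigenspace is exactly $\Range(\mX^\star)$ with eigenvalue $\approx npq$, and there is a spectral gap of order $npq$ to the remaining eigenvalues. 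Next I would bound the perturbation $\|\mY - \E[\mY]\|_{\mathrm{op}}$ using a matrix Bernstein / Erd\H{o}s–R\'enyi-type concentration inequality; the entries are independent across edges, bounded in norm, and standard arguments give $\|\mY - \E[\mY]\|_{\mathrm{op}} = \calO(\sqrt{nq\log n})$ with probability $1-\calO(1/n)$ (the $\sqrt{q}$ scaling comes from the variance being controlled by the observation probability, the $\sqrt{\log n}$ from the union bound / tail). Applying the Davis–Kahan $\sin\Theta$ theorem with gap $\Theta(npq)$ and perturbation $\calO(\sqrt{nq\log n})$ yields a subspace error (in Frobenius norm on the $nd\times d$ eigenvector matrix, after the $\sqrt n$ rescaling) of order $\sqrt{nq\log n}/(npq) \cdot \sqrt n = \calO(\sqrt{\log n}/(p\sqrt q))$, which is the claimed $\dist$ bound up to the orthogonal-vs-$\so(d)$ projection step.

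The $\dist_\infty$ bound is the more delicate part and is where the leave-one-out technique enters (as flagged in the subsection title). A naive per-block bound from the Frobenius estimate would only give $\calO(\sqrt{\log n}/(p\sqrt q))$ for each block, losing the crucial $\sqrt n$ improvement. Instead, for each fixed index $i$ I would introduce an auxiliary matrix $\mY^{(i)}$ obtained from $\mY$ by zeroing out (or replacing with their expectations) the $i$-th block row and column; the leading eigenspace $\mPhi^{(i)}$ of $\mY^{(i)}$ is \emph{independent} of the randomness in row/column $i$, so one can condition on it and bound $\|\mPhi_i - \mPhi^{(i)}_i\mR\|_F$ (for a suitable alignment rotation $\mR$) by writing the $i$-th block through the eigenvector fixed-point relation $\mPhi_i \approx \frac{1}{\lambda}(\mY\mPhi)_i = \frac{1}{\lambda}\sum_j \mY_{ij}\mPhi_j$ and using that $\{\mY_{ij}\}_j$ is a sum of $\calO(nq)$ independent mean-contributions hitting the (now independent) vectors $\mPhi^{(i)}_j$, giving a per-block fluctuation of order $\sqrt{nq\log n}/(npq) = \calO(\sqrt{\log n}/(p\sqrt{nq}))$. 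One also needs $\|\mPhi^{(i)} - \mPhi\|_F$ to be small (it is, by another Davis–Kahan since $\mY$ and $\mY^{(i)}$ differ by a low-rank order-$\sqrt{nq}$-norm perturbation) so that $\mPhi^{(i)}_j$ can be substituted for $\mPhi_j$ inside the sum. Combining these gives the $\ell_\infty$ bound.

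Finally I would handle the projection step: $\mX^0_i = \calP_{\so(d)}(\mPhi_i)$ (or of $\mPsi_i$), and standard perturbation bounds for the polar/SVD-based projection onto $\so(d)$ show that $\|\calP_{\so(d)}(\mPhi_i) - \mX_i^\star\mR\|_F \leq C\|\mPhi_i - \mX_i^\star\mR\|_F$ as long as $\mPhi_i$ is close enough to an orthogonal matrix (its singular values are bounded away from $0$), which holds on the good event; the sign-flip construction of $\mPsi$ versus $\mPhi$ together with Step 5–9 of $\SpectrIn$ resolves the determinant/reflection ambiguity so that the chosen output is genuinely aligned (after a \emph{rotation}, not merely an orthogonal transformation) with $\mX^\star$, which is what the definitions of $\dist$ and $\dist_\infty$ require. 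The hypothesis $p^2 q = \Omega(\log n / n)$ is exactly what makes the perturbation $\calO(\sqrt{nq\log n})$ small relative to the gap $\Theta(npq)$, i.e. $\sqrt{\log n}/(p\sqrt q) = o(\sqrt n)$, so that all the "close enough" conditions above are met. The main obstacle I anticipate is the leave-one-out argument for $\dist_\infty$: setting up the right auxiliary matrix, controlling the alignment rotations consistently across the chain of comparisons ($\mPhi \leftrightarrow \mPhi^{(i)} \leftrightarrow$ truth), and pushing the independence through the eigenvector fixed-point identity without circularity all require care, whereas the Frobenius-norm $\dist$ bound is a fairly routine Davis–Kahan plus matrix-concentration computation.
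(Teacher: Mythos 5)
Your proposal follows essentially the same route as the paper's proof: decompose $\mY = pq\,\mX^\star\mX^{\star\top} + \mW$ with $\mathbb{E}[\mO_{ij}]=\bm 0$, bound $\|\mW\|_{\mathrm{op}}=\calO(\sqrt{nq\log n})$ by matrix Bernstein, apply Davis--Kahan for the Frobenius bound, then use the leave-one-out matrices $\mY^{(m)}$ together with the eigenvector fixed-point identity $\mPhi_m = (\mY\mPhi)_m\mSigma^{-1}$ for the $\ell_\infty$ bound, finishing with the $\Dist$-to-$\dist$ transfer via the sign-flip construction and the Lipschitzness of block-wise projection onto $\so(d)$. The obstacles you anticipate (chaining alignment rotations, avoiding circularity through independence) are precisely the ones the paper's argument handles via $\mPi^m$, $\mPi^\star$ and Lemma~\ref{lemma:dist-Dist}, so your plan is sound.
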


The works \cite{singer2011angular} and \cite{chen2014information} show that exact reconstruction of $\mX^\star$ is information-theoretically possible if the condition $p^2q = \Omega\left(\log n / n\right)$ holds for the cases $d = 2$ and $d = 3$, respectively. Though \Cref{thm:init} does not provide exact recovery, it achieves an optimal sample complexity for reconstructing an approximate solution in the infinity norm. Specifically, \Cref{thm:init} shows that, as long as $p^2q \geq C\log n / n$ for some constant $C >0$ large enough, the $\ell_\infty$-distance $\dist_\infty (\mX^0, \mX^{\star})$ (i.e., $\max_{1\leq i\leq n}\dist(\mX_i, \mX_i^\star)$) can be made relatively small. However, the $\ell_2$-distance $\dist(\mX^0, \mX^{\star})$ is of the order
 $\Omega(\sqrt{n})$ under such a sample complexity. 

The work \cite{ling2022near} considers orthogonal and permutation group synchronization and shows that spectral relaxation-based methods achieve near-optimal performance bounds. Our result differs from that of \cite{ling2022near} in twofold: 1) Our approach follows the standard leave-one-out analysis based on the standard ``$\Dist$" (up to $\operatorname{O}(d)$ invariance) defined above \Cref{lem:davis-kahan} in the Appendix. Nonetheless, we have to transfer the results to ``$\dist$" due to the structure of $\so(d)$ in \Cref{lemma:dist-Dist}, which is a nontrivial step due to the specific structure of $\so(d)$. 2) Our result can handle incomplete observations (i.e., $q < 1$). In the case of incomplete observations, the construction in \eqref{def:noise} in the Appendix becomes more intricate; it has the additional third column, rendering the analysis of our \Cref{lem:operator-norm} more involved. 


We prove \Cref{thm:init} with some matrix concentration bounds and the leave-one-out technique. We provide the proof sketch below and refer to \Cref{appen:initialization} for the full derivations. 

\textbf{Proof outline of \Cref{thm:init}}.  According to \eqref{eq:measurement model} and the fact $\mathbb{E}(\mO_{ij}) = \bm{0}$ since outliers are assumed to be independently and uniformly distributed on $\so(d)$ in the RCM (see \Cref{appen:initialization}), we know that $\mathbb{E}(\mY_{ij}) =  pq\mX^\star_i\mX^{\star \top}_j$ for all $(i,j) \in [n] \times [n]$. This motivates us to introduce the noise matrix $\mW_{ij} = \mY_{ij}- pq\mX^\star_i\mX^{\star \top}_j$, i.e.,
\begin{equation}\label{eq:data matrix decomposition}
    \mY = pq\mX^\star\mX^{\star \top} + \mW.
\end{equation}
The condition $p^2q = \Omega(\log n / n)$ in \Cref{thm:init} ensures that the expectation $pq\mX^\star\mX^{\star \top}$ will dominate the noise matrix $\mW$ in the decomposition \eqref{eq:data matrix decomposition}.

We first discuss how to bound $\dist(\mX^0, \mX^{\star})$.
Notice that $\mX^0$ and $\mX^\star$ are the $d$ leading eigenvectors of $\mY$ (after projection onto $\so(d)^n$) and $ pq\mX^\star\mX^{\star \top}$, respectively. We can then use the matrix perturbation theory (see \Cref{lem:davis-kahan}) to bound $\dist(\mX^0, \mX^{\star})$. Towards this end, we need to estimate the operator norm $\|\mW\|_2$, which could be done by applying the standard matrix Bernstein concentration inequality \cite{tropp2015introduction} since the blocks $\{\mW_{ij}\}$ are i.i.d. white noise with bounded operator norms and variances; see \Cref{lem:operator-norm}. 

We next turn to bound the initialization error in the infinity norm, i.e., $\dist_\infty (\mX^0, \mX^{\star})$. Let us use $(\mW \mX^0)_m\in \R^{d\times d}$ to denote the $m$-th block of $\mW\mX^0 \in \R^{nd \times d}$ for $1\leq m\leq n$. The main technical challenge lies in deriving a sharp bound for the term $\max_{1\leq m \leq  n}\|(\mW \mX^0)_m\|_F$, as it involves two \emph{dependent} random quantities, i.e., the noise matrix $\mW$ and the initial $\mX^0$ that is obtained by projecting the first $d$ leading eigenvectors of $\mY$ onto $\so(d)^n$. To overcome such a statistical dependence, we utilize the leave-one-out technique. This technique was utilized in \cite{zhong2018near} to analyze the phase synchronization problem and was later applied to many other synchronization problems \cite{abbe2020entrywise, chen2019spectral,deng2021strong,  fan2018eigenvector, ling2022near}. Let us define
\begin{equation}
    \mY^{(m)} = pq\mX^\star \mX^{\star \top} + \mW^{(m)} \quad \text{with}\quad \mW^{(m)}_{kl} = \mW_{kl} \cdot \bm{1}_{\{k\neq m\}}\cdot \bm{1}_{\{l\neq m\}}. 
\end{equation}
That is, we construct $\mW^{(m)}\in \R^{nd \times nd}$ by setting the $m$-th block-wise row and column  of $\mW$ to be $\bm{0}$. Then, it is easy to see that $\mY^{(m)}$ is statistically independent of $\mw_m^\top\in \R^{d\times nd}$, where the latter denotes the $m$-th block-wise row of $\mW$. Let $\mX^{(m)}$ be the $d$ leading eigenvectors of $\mY^{(m)}$. Consequently, $\mX^{(m)}$ is also independent of $\mw_m^\top$. Based on the above discussions, we can bound each $\|(\mW \mX^0)_m\|_F$ in the following way:
\begin{equation}
        \|(\mW \mX^0)_m\|_F = \|\mw_m^\top \mX^0\|_F \leq \|\mw_m^\top \mX^{(m)}\|_F + \|\mw_m^\top(\mX^0 -\mX^{(m)})\|_F. 
\end{equation}
The first term $\|\mw_m^\top \mX^{(m)}\|_F$ can be bounded using an appropriate concentration inequality due to the statistical independence between $\mw_m^\top$ and $\mX^{(m)}$. The second term can be bounded as
\[
\|\mw_m^\top(\mX^0 -\mX^{(m)})\|_F \leq \|\mw_m\|_2 \cdot \|\mX^0 -\mX^{(m)}\|_F, 
\] 
in which $\|\mw_m\|_2$ can be further bounded by matrix concentration inequality (see \Cref{lem:operator-norm}) and $\|\mX^0 -\mX^{(m)}\|_F$ can be bounded using standard matrix perturbation theory (see \Cref{coro:dist0m}).

\subsection{Weak Sharpness and Exact Recovery}\label{sec:weak sharp}

We next present a property that is intrinsic to problem \eqref{eq:opt problem} in the following theorem. 

\begin{thm}[weak sharpness]\label{thm:weak sharpness}
Suppose that the ratios $p$ and $q$ satisfy
\[
     p^2q^2 = \Omega\left(\frac{\log n}{n}\right).
\]
Then, with probability at least $1-\calO(1/n)$, for any $\mX\in \so(d)^n$ satisfying $\dist_\infty(\mX, \mX^\star) = \calO(p)$, we have
\[
    \setlength{\abovedisplayskip}{2pt} 
    \setlength{\belowdisplayskip}{2pt}
    f(\mX) - f(\mX^\star) \geq \frac{npq}{8} \dist_1(\mX,\mX^\star).
\]
\end{thm}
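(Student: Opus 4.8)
The plan is to lower-bound $f(\mX)-f(\mX^\star)$ by expanding the sum over edges into three groups — true observations $\calA$, outliers $\calA^c$, and (implicitly) the missing edges which contribute nothing — and to show that the gain on the true edges $\calA$ overwhelms the possible loss on the outlier edges $\calA^c$. For a single true edge $(i,j)\in\calA$ we have $\mY_{ij}=\mX_i^\star\mX_j^{\star\top}$, so $f_{i,j}(\mX^\star)=0$ and $f_{i,j}(\mX)=\|\mX_i\mX_j^\top-\mX_i^\star\mX_j^{\star\top}\|_F$; the first step is to show that, after fixing the global rotation $\mR^\star$ attaining $\dist(\mX,\mX^\star)$ and writing $\mE_i := \mX_i - \mX_i^\star\mR^\star$, one has a two-sided bound $f_{i,j}(\mX)\gtrsim \|\mE_i\|_F + \|\mE_j\|_F$ up to lower-order terms, using $\|\mE_i\|_F,\|\mE_j\|_F = \calO(p)$ small together with orthogonality of $\mX_i^\star,\mX_j^\star,\mR^\star$. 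Indeed $\mX_i\mX_j^\top - \mX_i^\star\mX_j^{\star\top} = \mE_i\mX_j^\top + \mX_i^\star\mR^\star\mE_j^\top$ (since $\mX_i^\star\mR^\star(\mX_j^\star\mR^\star)^\top = \mX_i^\star\mX_j^{\star\top}$), and multiplying on the right by the orthogonal matrix $\mX_j$ and exploiting near-orthonormality of the two summands gives $f_{i,j}(\mX)\geq c(\|\mE_i\|_F+\|\mE_j\|_F)$ for a constant $c$ close to $1$, once a careful bookkeeping of cross terms is done. Summing over $j\in\calA_i$ and then over $i$, and using that in the $\RCM$ each node has $|\calA_i|\approx npq$ true neighbors (with high probability, via a Chernoff bound on the Erdős–Rényi–plus–corruption model), produces a term of order $npq\cdot\sum_i\|\mE_i\|_F = npq\cdot\dist_1(\mX,\mX^\star)$ with an explicit constant.

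The second step is to control the outlier contribution $\sum_{(i,j)\in\calA^c}\big(f_{i,j}(\mX)-f_{i,j}(\mX^\star)\big)$. By the reverse triangle inequality $f_{i,j}(\mX)-f_{i,j}(\mX^\star)\geq -\|\mX_i\mX_j^\top-\mX_i^\star\mX_j^{\star\top}\|_F\geq -(\|\mE_i\|_F+\|\mE_j\|_F)$ (using the identity above and orthogonality), so the outlier edges can cost at most $\sum_{(i,j)\in\calA^c}(\|\mE_i\|_F+\|\mE_j\|_F) = \sum_i |\calA_i^c|\,\|\mE_i\|_F$. Since $|\calA_i^c|\approx nq(1-p)$, this is of order $nq(1-p)\dist_1(\mX,\mX^\star)$, which is strictly dominated by the $npq$ gain from the true edges provided $p$ is bounded away from... — and this is exactly where the bound is \emph{not} tight enough by itself: a naive worst-case argument needs $p > 1/2$. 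The crucial refinement, and what I expect to be the main obstacle, is that the outlier observations $\mO_{ij}$ are \emph{random} (uniform on $\so(d)$) and independent, so $f_{i,j}(\mX)-f_{i,j}(\mX^\star)$ is on \emph{average} nonnegative, or at least has a controllably small negative part; one must invoke a concentration argument showing that, uniformly over the (continuum of) $\mX$ in the prescribed $\ell_\infty$-ball, $\sum_{(i,j)\in\calA^c}\big(f_{i,j}(\mX)-f_{i,j}(\mX^\star)\big)$ deviates from its expectation by at most $\calO(\sqrt{\log n/n})\cdot n$ times the relevant scale. This uniform control — likely via an $\varepsilon$-net over the $\ell_\infty$-ball, a Lipschitz bound on $\mX\mapsto f_{i,j}(\mX)$, and a Bernstein/Hoeffding bound on the i.i.d. outlier terms at each net point — is what drives the appearance of $p^2q^2 = \Omega(\log n/n)$ rather than $pq = \Omega(\log n / n)$, and is the technically heaviest part.

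Finally I would assemble the pieces: with high probability, $f(\mX)-f(\mX^\star) \geq (\text{const}\cdot npq)\dist_1(\mX,\mX^\star) - (\text{small})\cdot \dist_1(\mX,\mX^\star)$ where the "small" term bundles both the outlier worst-case slack and the concentration error, and for $n$ large the constant works out to beat $1/8$, yielding $f(\mX)-f(\mX^\star)\geq \frac{npq}{8}\dist_1(\mX,\mX^\star)$. A couple of bookkeeping points need care: (i) the global rotation $\mR^\star$ in $\dist$, $\dist_1$ is common to all blocks, so the per-edge bounds must be phrased in terms of the \emph{same} $\mR^\star$, which is legitimate since $f$ is itself invariant under a global right-multiplication; (ii) the small-$\dist_\infty$ hypothesis $\dist_\infty(\mX,\mX^\star) = \calO(p)$ is exactly what is needed to make the per-edge lower bound $f_{i,j}(\mX)\geq c(\|\mE_i\|_F+\|\mE_j\|_F)$ hold with $c$ close to $1$ (the quadratic cross-terms are then $\calO(p)$ smaller); (iii) all the Erdős–Rényi degree estimates and the net-based concentration bound hold simultaneously on an event of probability $1-\calO(1/n)$ by a union bound, which is where the $\log n$ enters. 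I would present the per-edge two-sided estimate as a standalone lemma, then the two summation/concentration estimates, then combine.
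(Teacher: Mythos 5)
There are two genuine gaps here, one in each of your two main steps, and both are things the paper explicitly works around.

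\paragraph{The per-edge lower bound for the true-edge sum is false.}
You propose a per-edge estimate $f_{i,j}(\mX)\geq c(\|\mE_i\|_F+\|\mE_j\|_F)$ with $\mE_i:=\mX_i-\mX_i^\star\mR^\star$, where $\mR^\star$ is the \emph{global} optimal alignment. Such a lower bound cannot hold edge-by-edge: take any $\mQ\in\so(d)$, $\mQ\neq\mR^\star$, and set $\mX_i=\mX_i^\star\mQ$, $\mX_j=\mX_j^\star\mQ$. Then $\mX_i\mX_j^\top=\mX_i^\star\mX_j^{\star\top}$, so $f_{i,j}(\mX)=0$, while $\|\mE_i\|_F=\|\mQ-\mR^\star\|_F\neq 0$. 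The residual $f_{i,j}$ only sees the \emph{relative} misalignment between nodes $i$ and $j$, not their common drift away from $\mR^\star$, and near-orthogonality of the two summands in $\mE_i\mX_j^\top+\mX_i^\star\mR^\star\mE_j^\top$ cannot be manufactured by the smallness of $\dist_\infty$. The paper avoids this trap by never bounding $f_{i,j}$ from below individually. Instead, Proposition~\ref{prop:gx} lower-bounds $g(\mX)=\sum_{(i,j)\in\calA}\|\mX_i-\mX_j\|_F$ globally: first a \emph{bridging} argument through common true neighbors $k\in\calA_{ij}$ gives $\sum_{i,j}\|\mX_i-\mX_j\|_F\lesssim \frac{1}{pq}\,g(\mX)$, using the concentration of $|\calA_{ij}|\approx np^2q^2$ from Lemma~\ref{lem:concentration of sets}; then an $\so(d)$-specific mean-centering inequality (Lemma~\ref{lem:a}, via $\overline{\mX}=\frac1n\sum_i\mX_i$) gives $\sum_{i,j}\|\mX_i-\mX_j\|_F\geq\frac{n}{2}\dist_1(\mX,\mX^\star)$. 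Combining yields $g(\mX)\gtrsim npq\cdot\dist_1(\mX,\mX^\star)$ without any false per-edge claim. Note in particular that the condition $p^2q^2=\Omega(\log n/n)$ is precisely what makes $|\calA_{ij}|$ concentrate; it enters through this bridging step, not through a net.

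\paragraph{The $\varepsilon$-net route for the outlier sum does not reach the claimed sample complexity.}
You correctly identify that the naive reverse-triangle bound $h(\mX)-h(\mX^\star)\geq -\sum_i|\calA_i^c|\|\mE_i\|_F$ needs $p>1/2$, and you propose to rescue it by a uniform concentration bound over an $\varepsilon$-net in the $\ell_\infty$-ball. The paper explicitly remarks that this route fails at $p^2q^2=\Omega(\log n/n)$: the net over $\so(d)^n$ has exponentially many (in $nd$) points, so a union bound would demand a much stronger sample complexity (essentially $pq=\Omega(1)$). What the paper does instead is to \emph{linearize}: by convexity of $\|\cdot\|_F$,
\[
  h(\mX)-h(\mX^\star)\;\geq\;\sum_{(i,j)\in\calA^c}\Big\langle\tfrac{\mX_i^{\star\top}\mX_j^\star-\mO_{ij}}{\|\mX_i^{\star\top}\mX_j^\star-\mO_{ij}\|_F},\;\mX_i^\top\mX_j-\mX_i^{\star\top}\mX_j^\star\Big\rangle.
\]
After centering, the randomness enters only through the matrices $\mZ_{ij}$ built from $\mO_{ij}$, and the $\mX$-dependence separates: the sum becomes a bilinear form in $(\mX_i-\mId)$ and $(\mX_j-\mId)$ plus linear terms, as in \eqref{eq:term2}. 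This lets the paper bound the whole expression by $\big(\|\mZ\|_{\operatorname{op}}+\max_i\|\sum_j\mZ_{ij}\|_F\big)\cdot\dist_1(\mX,\mX^\star)$ \emph{uniformly in $\mX$}, with no net, using a single matrix Bernstein bound on $\mZ$ and the deterministic expectation term; see \eqref{eq:split outliers}--\eqref{eq:matrix concentration bound}. The small-$\dist_\infty$ hypothesis is then used only to absorb the $\|\mA\|_F$-weighted quadratic term, not to tame cross-terms in a per-edge estimate.

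In short, the global structure of your plan (true edges dominate, outliers controlled by randomness) is right, but both the per-edge lower bound and the $\varepsilon$-net mechanism are unsound; the paper replaces them with a neighbor-bridging argument plus an $\so(d)$ mean-centering lemma for $g$, and a linearization-plus-matrix-Bernstein argument for $h$.
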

Some remarks on \Cref{thm:weak sharpness} are in order. 
This theorem shows that problem \eqref{eq:opt problem} possesses the \emph{weak sharpness} property \cite{burke1993weak}, which is intrinsic to the problem and independent of the algorithm used to solve it. It is known that with this property, various subgradient-type methods can achieve linear convergence \cite{davis2018subgradient,li2020nonconvex}. We will establish a similar linear convergence result for $\ReSync$ in the next subsection based on \Cref{thm:weak sharpness}.

The weak sharpness property shown in \Cref{thm:weak sharpness} is of independent interest, as it could be helpful when analyzing other optimization algorithms (not just $\ReSync$) for solving problem \eqref{eq:opt problem}. Currently, only a few applications are known to produce sharp optimization problems, such as robust low-rank matrix recovery \cite{li2020nonconvex}, robust phase retrieval \cite{duchi2019solving}, and robust subspace recovery \cite{li2021weakly}. Furthermore, sharp instances of manifold optimization problems are especially scarce. Hence, \Cref{thm:weak sharpness} extends the list of optimization problems that possess the weak sharpness property and contributes to the growing literature on the geometry of structured nonsmooth nonconvex optimization problems.

It is worth noting that \Cref{thm:weak sharpness} also establishes the \emph{exact recovery} property of the formulation \eqref{eq:opt problem}. Specifically, up to a global rotation, the ground-truth $\mX^\star$ is guaranteed to be the unique global minimizer of $f$ over the region $\so(d)^n \cap \{\mX: \dist_\infty(\mX, \mX^\star) = \calO(p)\}$. Consequently, recovering the underlying $\mX^\star$ reduces to finding the global minimizer of $f$ over the aforementioned region. As we will show in the next subsection, $\ReSync$ will converge linearly to the global minimizer $\mX^\star$ when initialized in this region. However, the initialization requirement is subject to the stronger condition $p^4q = \Omega\left({\log n}/{n}\right)$ on the ratios $p$ and $q$, which is ensured by \Cref{thm:init}. 

We list our main ideas for proving \Cref{thm:weak sharpness} below. The full proof can be found in \Cref{appen:weak sharpness}.

\textbf{Proof outline of \Cref{thm:weak sharpness}}. 
Note that the objective function $f$ can be decomposed into two parts:
\begin{equation}\label{eq:fx random}
f(\mX)=\overbracket{\sum_{(i,j)\in\calA}\|\mX_i^\top\mX_j - \mX_i^{\star \top}\mX_j^\star\|_F}^{g(\mX)} + \overbracket{\sum_{(i,j)\in\calA^c}\|\mX_i^\top\mX_j-\mO_{ij}\|_F}^{h(\mX)}.
\end{equation} 
It is easy to see that $g(\mX^\star) = 0$ and $g(\mX) \geq 0$. Based on the fact that the true observation is uniformly distributed in all the indices, we have
$
\mathbb{E}\left(g(\mX)\right) = pq \sum_{1\leq i,j \leq n} \|\mX_i^\top\mX_j - \mX_i^{\star \top}\mX_j^\star\|_F \geq \frac{npq}{2}\dist_1(\mX, \mX^\star)
$; see \Cref{appen:gx} for the last inequality. A traditional way to lower bounding $g(\mX)$ using $\mathbb{E}(g(\mX))$ for all $\mX\in \so(d)$ is to apply concentration inequality and an epsilon-net covering argument. Unfortunately, the sample complexity condition $p^2q^2 = \Omega(\log n / n)$ does not lead to a high probability result in this way. Instead, our approach is to apply the concentration theory on the cardinalities of index sets rather than on $\mX$ directly; see the following lemma.
\begin{lem}[concentration of cardinalities of index sets] \label{lem:concentration of sets}
Given any $\epsilon = \Omega\left(\frac{\sqrt{\log n}}{\sqrt{n}pq}\right)$, with probability at least $1-\calO(1/n)$, we have
\begin{align*}
		(1-\epsilon)nq&\leq|\calE_i|\leq(1+\epsilon)nq, \quad 
		& (1-\epsilon)npq\leq|\calA_i|\leq(1+\epsilon)npq,\\
		(1-\epsilon)npq^2&\leq|\mathcal{E}_i \cap \calA_{j}|\leq(1+\epsilon)npq^2, \quad 
		&(1-\epsilon)np^2q^2 \leq|\calA_{ij}|\leq(1+\epsilon)np^2q^2
\end{align*}
for any $1\leq i,j\leq n$. See \Cref{sec:intro} for the notation. 
\end{lem}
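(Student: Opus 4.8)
\textbf{Proof outline of \Cref{lem:concentration of sets}.}
The plan is to express each of the four cardinalities as a sum of independent Bernoulli random variables, apply a multiplicative Chernoff bound to each, and then take a union bound over the relevant indices. Under the $\RCM$, distinct (unordered) edges of $\calG$ are mutually independent, with each edge present in $\calE$ with probability $q$ and, independently, flagged as a true observation (i.e.\ placed in $\calA$) with probability $pq$. Fixing $i$, and for the last two quantities also $j\neq i$, we may therefore write
\[
|\calE_i|=\sum_{k\neq i}\bm{1}_{\{(i,k)\in\calE\}},\qquad |\calA_i|=\sum_{k\neq i}\bm{1}_{\{(i,k)\in\calA\}},
\]
\[
|\calE_i\cap\calA_j|=\sum_{k\notin\{i,j\}}\bm{1}_{\{(i,k)\in\calE\}}\bm{1}_{\{(j,k)\in\calA\}},\qquad |\calA_{ij}|=\sum_{k\notin\{i,j\}}\bm{1}_{\{(i,k)\in\calA\}}\bm{1}_{\{(j,k)\in\calA\}},
\]
where in each case the summands are independent $\{0,1\}$-valued random variables (for the last two this uses $i\neq j$, so that the edge pairs $\{(i,k),(j,k)\}$ are pairwise disjoint over $k$ and each summand is a product of two independent edge indicators). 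Their means are, respectively, $(n-1)q$, $(n-1)pq$, $(n-2)pq^2$, and $(n-2)p^2q^2$; since $\epsilon=\Omega(\sqrt{\log n}/(\sqrt n\,pq))\gg 1/n$, replacing $n-1$ and $n-2$ by $n$ changes these means by a factor $1\pm o(\epsilon)$, which I would absorb by slightly shrinking $\epsilon$. I would also assume $\epsilon\in(0,1)$ without loss of generality (for $\epsilon\geq 1$ the lower bounds are vacuous and the upper bounds follow from the one-sided form of Chernoff).

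Next, I would invoke the multiplicative Chernoff bound: if $X$ is a sum of independent $\{0,1\}$-valued random variables with mean $\mu$, then $\Pr[|X-\mu|\geq\epsilon\mu]\leq 2\exp(-\epsilon^2\mu/3)$ for every $\epsilon\in(0,1)$. Applying this to each of the four quantities, and noting that, because $p,q\leq 1$, the four means satisfy $nq\geq npq\geq npq^2\geq np^2q^2$, every one of the four deviation events has probability at most $2\exp(-\epsilon^2 np^2q^2/3)$. There are $O(n)$ choices of $i$ for the first two bounds and $O(n^2)$ pairs $(i,j)$ for the last two, so a union bound controls the total failure probability by $O(n^2)\cdot 2\exp(-\epsilon^2 np^2q^2/3)$. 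Choosing the constant hidden in $\epsilon=\Omega(\sqrt{\log n}/(\sqrt n\,pq))$ large enough forces $\epsilon^2 np^2q^2\geq 9\log n$, whence this bound is $O(n^2)\cdot n^{-3}=O(1/n)$, which is the claim.

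This is essentially a routine Chernoff-plus-union-bound argument, so I do not anticipate a genuine obstacle; the only point requiring care --- and the reason the threshold for $\epsilon$ takes the specific form $\sqrt{\log n}/(\sqrt n\,pq)$ --- is the bookkeeping that pits the $O(n^2)$ union bound against the \emph{smallest} of the four means. That smallest mean, $\Theta(np^2q^2)$, belongs to $|\calA_{ij}|$ (the number of common true neighbours of $i$ and $j$), and it is this quantity, which simultaneously has the smallest expectation and must be controlled over all $\binom n2$ pairs, that dictates the sample-complexity condition. Besides this, I would double-check the independence structure underlying the product-Bernoulli representations of $|\calE_i\cap\calA_j|$ and $|\calA_{ij}|$ for $i\neq j$, and remark that for $i=j$ these sets reduce to $\calA_i$, which is already handled by the second displayed bound. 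The contrast with the ``naive'' route flagged in the proof outline of \Cref{thm:weak sharpness} --- an $\epsilon$-net argument over $\mX\in\so(d)^n$ --- is precisely that the present approach concentrates only $O(n^2)$ scalar counts, thereby avoiding the far larger sample complexity that covering a continuum would demand.
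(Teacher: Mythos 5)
Your argument is essentially the same as the paper's: both express each cardinality as a sum of independent Bernoulli indicators, apply a concentration inequality to each (you use multiplicative Chernoff, the paper uses Bernstein — these give the same order of bound here since the indicators are bounded), and then take a union bound over the $O(n^2)$ index pairs, observing that the smallest mean $\Theta(np^2q^2)$, which belongs to $|\calA_{ij}|$, dictates the threshold on $\epsilon$. Your version is somewhat more careful about the exact means ($n-1$ or $n-2$ vs.\ $n$), the independence of the edge-pair indicators for $i\neq j$, and the degenerate $i=j$ case, but these are bookkeeping details that the paper silently absorbs; the route is the same.
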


We then provide a sharp lower bound on $g(\mX)$ based on \Cref{lem:concentration of sets}.
\begin{prop}\label{prop:gx}
Under the conditions of \Cref{thm:weak sharpness}, with probability at least $1 - \mathcal{O}(1 / n)$, we have
\begin{align}\label{ineq:lem:bd1}
  g(\mX) \geq \frac{3npq}{16} \dist_1\left(\mX, \mX^\star\right), \quad \forall \mX \in \so(d)^n.
\end{align}
\end{prop}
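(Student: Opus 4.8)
The plan is to lower-bound $g(\mX)$ by a fixed multiple of the \emph{full} error sum $\sum_{1\le i,j\le n}e_{ij}(\mX)$, where $e_{ij}(\mX):=\|\mX_i^\top\mX_j-\mX_i^{\star\top}\mX_j^\star\|_F\ge0$, and then invoke the inequality $\sum_{1\le i,j\le n}e_{ij}(\mX)\ge\frac n2\dist_1(\mX,\mX^\star)$ established in \Cref{appen:gx}. Since $g(\mX)=\sum_{(i,j)\in\calA}e_{ij}(\mX)$, it suffices to show that, on a single high-probability event, $\sum_{i,j}e_{ij}(\mX)\le\frac{8}{3pq}\,g(\mX)$ holds for \emph{every} $\mX\in\so(d)^n$ at once. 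The crucial feature of this route is that the only randomness it uses is that of the corruption model --- which ordered pairs fall in $\calA$ --- so, after conditioning on that event, the estimate is purely deterministic in $\mX$; this is how we sidestep an $\epsilon$-net argument over $\so(d)^n$, which would be too lossy at the sample complexity $p^2q^2=\Omega(\log n/n)$. The event we condition on is the one in \Cref{lem:concentration of sets}, which has probability $1-\calO(1/n)$ and which we apply with a small tolerance $\epsilon$ (any $\epsilon$ of order $\Omega(\sqrt{\log n}/(\sqrt n\,pq))$ is allowed, and $p^2q^2=\Omega(\log n/n)$ with a large enough implied constant lets us take $\epsilon\le1/7$).

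The key structural fact is a triangle inequality for the relative-rotation errors: for any indices $i,j,k$,
\begin{align*}
\mX_i^\top\mX_j-\mX_i^{\star\top}\mX_j^\star &=\bigl(\mX_i^\top\mX_k-\mX_i^{\star\top}\mX_k^\star\bigr)\,\mX_k^\top\mX_j\\
&\quad+\mX_i^{\star\top}\mX_k^\star\,\bigl(\mX_k^\top\mX_j-\mX_k^{\star\top}\mX_j^\star\bigr),
\end{align*}
so, because $\mX_k^\top\mX_j$ and $\mX_i^{\star\top}\mX_k^\star$ are orthogonal (hence Frobenius-norm preserving), $e_{ij}(\mX)\le e_{ik}(\mX)+e_{kj}(\mX)$. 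For any ordered pair $(i,j)$ the relay set $\calA_{ij}=\calA_i\cap\calA_j$ is nonempty --- indeed $|\calA_{ij}|\ge(1-\epsilon)np^2q^2\ge1$ by \Cref{lem:concentration of sets}, using $np^2q^2=\Omega(\log n)$ --- so averaging the triangle inequality over $k\in\calA_{ij}$ gives
\[
e_{ij}(\mX)\le\frac{1}{|\calA_{ij}|}\sum_{k\in\calA_{ij}}\bigl(e_{ik}(\mX)+e_{kj}(\mX)\bigr)\le\frac{1}{(1-\epsilon)np^2q^2}\sum_{k\in\calA_{ij}}\bigl(e_{ik}(\mX)+e_{kj}(\mX)\bigr).
\]

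Now sum over all ordered pairs $(i,j)$ and count multiplicities. By the $i\leftrightarrow j$ symmetry and $e_{ki}(\mX)=e_{ik}(\mX)$ (a transpose), the two halves coincide, and because $k\in\calA_{ij}$ is equivalent to $k\in\calA_i$ together with $j\in\calA_k$,
\[
\sum_{i,j}\sum_{k\in\calA_{ij}}e_{ik}(\mX)=\sum_{i}\sum_{k\in\calA_i}e_{ik}(\mX)\,|\calA_k|\le(1+\epsilon)npq\sum_{i}\sum_{k\in\calA_i}e_{ik}(\mX)=(1+\epsilon)npq\cdot g(\mX),
\]
where we used $|\calA_k|\le(1+\epsilon)npq$ from \Cref{lem:concentration of sets}. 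Combining the last two displays, $\sum_{i,j}e_{ij}(\mX)\le\frac{2(1+\epsilon)}{(1-\epsilon)pq}\,g(\mX)$, hence $g(\mX)\ge\frac{(1-\epsilon)npq}{4(1+\epsilon)}\dist_1(\mX,\mX^\star)$ after applying the bound from \Cref{appen:gx}; with $\epsilon\le1/7$ the coefficient is at least $\frac{3npq}{16}$, which is the claim, and it holds simultaneously for all $\mX\in\so(d)^n$ on the event of \Cref{lem:concentration of sets}.

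I expect the main obstacle to be hitting the precise constant $\frac{3}{16}$: the ``two-hop'' detour through $\calA$ costs a factor of $2$ (the two endpoints of the relaying edge), and one must also absorb the multiplicative slack $\frac{1+\epsilon}{1-\epsilon}$ coming from the cardinality fluctuations --- both are affordable only because the hypothesis $p^2q^2=\Omega(\log n/n)$ (with a sufficiently large implied constant) simultaneously guarantees that every $|\calA_{ij}|\ge1$, so relay nodes always exist, and that $\epsilon$ can be pushed below $1/7$. If one were content with an unspecified constant $c>0$ in place of $\frac{3}{16}$, a cruder counting would suffice; obtaining the stated value is exactly what forces the careful bookkeeping above.
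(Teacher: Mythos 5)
Your proof is correct and follows essentially the same route as the paper's: the same two-hop triangle inequality $e_{ij}\le e_{ik}+e_{kj}$ averaged over relay nodes $k\in\calA_{ij}$, the same re-indexing/counting that brings in $|\calA_k|$, the same use of \Cref{lem:concentration of sets} for the cardinality bounds, the same invocation of $\sum_{i,j}e_{ij}\ge\tfrac n2\dist_1(\mX,\mX^\star)$, and the same arithmetic showing $\epsilon\le 1/7$ suffices for the constant $3/16$. The only cosmetic difference is that the paper first reduces to $\mX_i^\star=\mId$ so that $e_{ij}=\|\mX_i-\mX_j\|_F$, while you keep the general form and verify the triangle inequality via the orthogonality identity directly.
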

Next, to lower bound $h(\mX) - h(\mX^\star) = \sum_{(i,j)\in\calA^c} \big( \|\mX_i^\top\mX_j-\mO_{ij}\|_F - \|\mX_i^{\star \top}\mX_j^\star -\mO_{ij}\|_F\big)$ we first bound 
\[
h(\mX) - h(\mX^\star) 
 \geq \sum_{(i,j)\in\calA^c} \left\langle\frac{\mX_i^{\star \top}\mX_j^\star-\mO_{ij}}{\|\mX_i^{\star \top}\mX_j^\star-\mO_{ij}\|_F},\mX_i^\top\mX_j-\mX_i^{\star \top}\mX_j^\star \right\rangle,
\]
where the inequality comes from the convexity of the norm function $\mU \mapsto \|\mU\|_F$ whenever $\mX_i^{\star \top}\mX_j^\star-\mO_{ij} \neq \bm{0}$.
Then, using the orthogonality of each block of $\mX^\star$, we further have
\begin{equation}\label{eq:bound hx}
h(\mX) - h(\mX^\star) \geq  \sum_{(i,j)\in\calA^c} \left\langle \frac{\mId- \mX_i^{\star}\mO_{ij}\mX_j^{\star \top}}{\|\mId- \mX_i^{\star}\mO_{ij}\mX_j^{\star \top}\|_F}, \mX_i^{\star}\mX_i^\top\mX_j\mX_j^{\star \top}-\mId \right\rangle.
\end{equation}
Recall that since the outliers $\{\mO_{i,j}\}_{(i,j) \in \calA^c}$ are independently and uniformly distributed on $\so(d)$, so are $\{\mX_i^{\star}\mO_{ij}\mX_j^{\star \top}\}_{(i,j) \in \calA^c}$. This observation indicates that $\left\{ {\mId- \mX_i^{\star}\mO_{ij}\mX_j^{\star \top}} / {\|\mId- \mX_i^{\star}\mO_{ij}\mX_j^{\star \top}\|_F} \right\}_{(i,j)\in \calA^c}$ are i.i.d. random matrices. Hence, by invoking concentration results that utilize the randomness of the outliers $\{\mO_{i,j}\}_{(i,j) \in \calA^c}$ and the cardinalities  $(i,j)\in \calA^c$, we obtain the following result.
\begin{prop}\label{prop:hx}
Under the conditions of \Cref{thm:weak sharpness}, with probability at least $1 - \mathcal{O}(1 / n)$, we have
\begin{align}\label{ineq:lem:bd1}
h(\mX) - h(\mX^\star) \geq - \frac{npq}{16} \dist_1\left(\mX, \mX^\star\right)
\end{align}
for all $\mX\in \so(d)^n$ satisfying $\dist_\infty(\mX, \mX^\star) = \calO(p)$.
\end{prop}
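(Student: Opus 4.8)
The plan is to start from the lower bound \eqref{eq:bound hx}, which reduces the claim to showing that
$\Sigma(\mX):=\sum_{(i,j)\in\calA^c}\langle \mZ_{ij},\mM_{ij}\rangle\ge-\tfrac{npq}{16}\dist_1(\mX,\mX^\star)$ for every admissible $\mX$, where $\mZ_{ij}:=(\mId-\mX_i^\star\mO_{ij}\mX_j^{\star\top})/\|\mId-\mX_i^\star\mO_{ij}\mX_j^{\star\top}\|_F$ and $\mM_{ij}:=\mX_i^\star\mX_i^\top\mX_j\mX_j^{\star\top}-\mId$. The central idea is the splitting $\mZ_{ij}=c\mId+(\mZ_{ij}-c\mId)$ with $c\mId:=\E\mZ_{ij}$. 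First I would verify that $\E\mZ_{ij}$ really is a scalar matrix: since $\mO_{ij}$ is Haar on $\so(d)$, so is $\mX_i^\star\mO_{ij}\mX_j^{\star\top}$, and $\E\mZ_{ij}$ is both symmetric (Haar is inversion-invariant, so $\mZ_{ij}$ and $\mZ_{ij}^\top$ are equidistributed) and commutes with every $\mQ\in\so(d)$ (Haar is conjugation-invariant, using $\|\mQ\mA\mQ^\top\|_F=\|\mA\|_F$); hence $\E\mZ_{ij}=c\mId$ with $c=\tfrac1{2d}\E\|\mId-\mR\|_F\in(0,1/\sqrt d]$ for $\mR$ Haar on $\so(d)$. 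This scalar fact is exactly what tames the ``mean part'': because $\mB_{ij}:=\mX_i^\star\mX_i^\top\mX_j\mX_j^{\star\top}\in\so(d)$, we get $\langle c\mId,\mM_{ij}\rangle=c(\trace\mB_{ij}-d)=-\tfrac c2\|\mId-\mB_{ij}\|_F^2$, which is \emph{quadratic} in the perturbation rather than linear.

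Next I would set up the perturbation bookkeeping. Using the global-rotation invariance of $f$ (hence of $h$) we may assume the optimal alignment in $\dist$ is the identity, so that, writing $\mF_i:=\mX_i-\mX_i^\star$ and $e_i:=\|\mF_i\|_F$, we have $\sum_i e_i=\dist_1(\mX,\mX^\star)$ and $\max_i e_i=\dist_\infty(\mX,\mX^\star)\le\eta p$ for the small constant $\eta$ implicit in the hypothesis. A direct expansion gives the identity $\mM_{ij}=\mF_j\mX_j^{\star\top}+\mX_i^\star\mF_i^\top+\mX_i^\star\mF_i^\top\mF_j\mX_j^{\star\top}$, and in particular $\|\mM_{ij}\|_F=\|\mX_i^\top\mX_j-\mX_i^{\star\top}\mX_j^\star\|_F\le e_i+e_j$. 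For the mean part, $\bigl|\sum_{(i,j)\in\calA^c}\langle c\mId,\mM_{ij}\rangle\bigr|\le\tfrac c2\sum_{(i,j)\in\calA^c}(e_i+e_j)^2\le c\eta p\sum_{(i,j)\in\calA^c}(e_i+e_j)\le 2c\eta p\,(\max_i|\calE_i|)\,\dist_1(\mX,\mX^\star)$, which by \Cref{lem:concentration of sets} (giving $|\calE_i|\le(1+\epsilon)nq$) is at most $\tfrac{npq}{48}\dist_1(\mX,\mX^\star)$ once $\eta$ is small. The genuinely quadratic remainder $\sum_{(i,j)\in\calA^c}\langle\mZ_{ij}-c\mId,\mX_i^\star\mF_i^\top\mF_j\mX_j^{\star\top}\rangle$ is dominated the same way, using $\|\mZ_{ij}-c\mId\|_F=\calO(1)$ and $\sum_{(i,j)\in\calA^c}e_ie_j\le\eta p\,(\max_i|\calE_i|)\,\dist_1(\mX,\mX^\star)\le\tfrac{npq}{48}\dist_1(\mX,\mX^\star)$.

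The crux is the linear-in-$\mF$ part $L(\mX):=\sum_{(i,j)\in\calA^c}\bigl(\langle\mZ_{ij}-c\mId,\mF_j\mX_j^{\star\top}\rangle+\langle\mZ_{ij}-c\mId,\mX_i^\star\mF_i^\top\rangle\bigr)$. Reindexing by the vertex $i$ rewrites this as $L(\mX)=\sum_{i=1}^n\langle\mF_i,\mG_i\rangle$ with $\mG_i:=\bigl(\sum_{j:(i,j)\in\calA^c}(\mZ_{ij}-c\mId)^\top+\sum_{j:(j,i)\in\calA^c}(\mZ_{ji}-c\mId)\bigr)\mX_i^\star$, and the key point is that $\mG_i$ does \emph{not} depend on $\mX$; hence $|L(\mX)|\le(\max_i\|\mG_i\|_F)\,\dist_1(\mX,\mX^\star)$ uniformly over the region, and it remains to prove $\max_i\|\mG_i\|_F\le\tfrac{npq}{48}$ with probability $1-\calO(1/n)$. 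Conditioning on the random graph and corruption pattern and on the event of \Cref{lem:concentration of sets}, each $\mG_i$ is an orthogonal image of a sum of at most $\calO(nq)$ i.i.d.\ mean-zero $d\times d$ matrices of Frobenius norm $\calO(1)$, so an entrywise Bernstein bound (or matrix Bernstein) together with a union bound over $i\in[n]$ yields $\max_i\|\mG_i\|_F=\calO(\sqrt{nq\log n})$; and $\sqrt{nq\log n}\le\tfrac{npq}{48}$ is precisely $np^2q=\Omega(\log n)$, which is implied by the hypothesis $p^2q^2=\Omega(\log n/n)$ since $q\le1$. Intersecting this event with that of \Cref{lem:concentration of sets} and adding the three estimates (each $\le\tfrac{npq}{48}\dist_1$, the mean part being moreover nonpositive) gives $\Sigma(\mX)\ge-\tfrac{3npq}{48}\dist_1(\mX,\mX^\star)=-\tfrac{npq}{16}\dist_1(\mX,\mX^\star)$, as desired. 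The main obstacle is exactly this linear term: a naive $\epsilon$-net over $\{\mX:\dist_\infty(\mX,\mX^\star)=\calO(p)\}$ would require $\Omega(nd^2)$ net points and ruin the sample complexity, so one must instead factor the $\mX$-dependence through the linear functional $\mF\mapsto\sum_i\langle\mF_i,\mG_i\rangle$ whose $\mX$-free coefficients $\mG_i$ concentrate; together with the fact that $\E\mZ_{ij}$ must be kept as the scalar $c\mId$ (a generic $\E\mZ_{ij}$ of operator norm $\Theta(1)$ would contribute a linear term of size $\Theta(nq)\dist_1$, larger than the target by a factor $1/p$), this is what makes the weak-sharpness-compatible condition $p^2q^2=\Omega(\log n/n)$ suffice.
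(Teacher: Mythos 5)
Your argument is correct and reaches the same conclusion under the same hypotheses; it follows the paper's general strategy of splitting $\mX_i^\top\mX_j-\mX_i^{\star\top}\mX_j^\star$ into a bias term plus linear and quadratic contributions in $\mF_i=\mX_i-\mX_i^\star$, but each piece is handled by a genuinely different mechanism, and the contrast is instructive. For the bias, you center $\mZ_{ij}$ by its conditional (per-outlier) mean $c\mId$ while keeping the sum over $\calA^c$, and exploit the trace identity $\langle c\mId,\mB-\mId\rangle=-\tfrac c2\|\mId-\mB\|_F^2$ for $\mB\in\so(d)$ so that the mean contribution is visibly quadratic. The paper instead folds the edge/corruption indicator into its $\mZ_{ij}$, sums over all ordered pairs, rotates so that $\sum_i\mX_i$ is diagonal, and then uses the factorization $\sum_{i,j}\langle\mA,\mX_i^\top\mX_j-\mId\rangle=\bigl\langle\mA,(\sum_i\mX_i+n\mId)^\top(\sum_i\mX_i-n\mId)\bigr\rangle$ together with $\|\mA\|_F\le1/\sqrt{2}$; your Haar-invariance route makes it more transparent \emph{why} the bias is quadratic, whereas the paper's requires the global alignment choice. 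For the bilinear part $\mF_i^\top\mF_j$, you replace the paper's concentration bound $\|\mZ\|_{\mathrm{op}}=\calO(\sqrt{nq\log n})$ by the deterministic estimate $\sum_{\calA^c}e_ie_j\le(\max_ie_i)(\max_i|\calE_i|)\dist_1$ together with Lemma~\ref{lem:concentration of sets}, which is more elementary and yields the same order because $\max_i e_i=\calO(p)$. Your vertex-indexed, $\mX$-free coefficients $\mG_i$ play exactly the role of the paper's row sums $\sum_j\mZ_{ij}$, both controlled by Bernstein. One minor bookkeeping point: under the undirected-graph convention $\mO_{ji}=\mO_{ij}^\top$ one has $\mZ_{ji}=\mZ_{ij}^\top$, so the two sums defining $\mG_i$ collapse to $2\sum_{j\in\calA_i^c}(\mZ_{ij}-c\mId)^\top\mX_i^\star$ and contain $|\calA_i^c|$ independent summands rather than $2|\calA_i^c|$; this affects only constants.
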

Combining \Cref{prop:gx} and \Cref{prop:hx} gives \Cref{thm:weak sharpness}.

\subsection{Convergence Analysis and Proof of \Cref{thm:main result}} \label{sec:convergence}

Let us now turn to utilize the weak sharpness property shown in \Cref{thm:weak sharpness} to establish the local linear convergence of $\ReSync$.
As a quick corollary of \Cref{thm:weak sharpness}, we have the following result.
\begin{cor}\label{cor:bound_sub}
Under the conditions of \Cref{thm:weak sharpness}, with probability at least $1-\calO(1/n)$, for any $\mX\in \so(d)^n$ satisfying $\dist_\infty(\mX, \mX^\star) = \calO(p)$, we have
\begin{equation}\label{ineq:regularity}
    \left\langle\widetilde \nabla_{\calR}  f(\mX),\mX^\star-\mX\right\rangle\leq  - \frac{npq}{16}\dist_1 (\mX,\mX^\star), \quad \forall \ \widetilde \nabla_{\calR}  f(\mX) \in \partial_{\calR} f(\mX).
\end{equation}
\end{cor}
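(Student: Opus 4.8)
The plan is to derive \eqref{ineq:regularity} from the weak sharpness bound of \Cref{thm:weak sharpness} via a \emph{Riemannian subgradient inequality}, exploiting a ``hidden convexity'' of the objective. The observation that makes this a quick corollary is that although \eqref{eq:opt problem} is nonconvex \emph{because of the manifold constraint}, the objective $f$ restricted to $\so(d)^n$ is the restriction of a \emph{convex} function on $\R^{nd\times d}$: by block orthogonality $f_{i,j}(\mX)=\|\mX_i\mX_j^\top-\mY_{ij}\|_F=\|\mX_i-\mY_{ij}\mX_j\|_F$ for $\mX\in\so(d)^n$, so $f$ agrees on $\so(d)^n$ with $\hat f(\mX):=\sum_{(i,j)\in\calE}\|\mX_i-\mY_{ij}\mX_j\|_F$, which is convex on $\R^{nd\times d}$ (a sum of norms of affine maps of $\mX$) and invariant under right multiplication of $\mX$ by $\so(d)$. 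A routine subdifferential-calculus check --- using $\mY_{ji}=\mY_{ij}^\top$ and that each $\mY_{ij}$ is orthogonal --- shows that at every $\mX\in\so(d)^n$ the Euclidean subdifferential $\partial f(\mX_i)=2\sum_{j:(i,j)\in\calE}\partial f_{i,j}(\mX_i)$ used by $\ReSync$ coincides with the block $\partial_{\mX_i}\hat f(\mX)$ (the factor $2$ coming from the two edge orientations $(i,j)$ and $(j,i)$); hence every $\widetilde\nabla_{\calR}f(\mX)\in\partial_{\calR}f(\mX)$ can be written as $\calP_{\T_\mX}(\widetilde\nabla\hat f(\mX))$ for some subgradient $\widetilde\nabla\hat f(\mX)$ of the convex $\hat f$ at $\mX$.

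Next I would record the elementary fact that for $\mR_1,\mR_2\in\so(d)$ one has $\|\calP_{\T_{\mR_1}}(\mR_2-\mR_1)-(\mR_2-\mR_1)\|_F\le\tfrac12\|\mR_2-\mR_1\|_F^2$: writing $\mQ=\mR_1^\top\mR_2$, the left-hand side equals $\|\mId-\tfrac12(\mQ+\mQ^\top)\|_F$, and since $\mId-\tfrac12(\mQ+\mQ^\top)\succeq\mzero$ this is at most $\trace(\mId-\tfrac12(\mQ+\mQ^\top))=\tfrac12\|\mR_2-\mR_1\|_F^2$. Now fix $\mX\in\so(d)^n$ with $\dist_\infty(\mX,\mX^\star)\le C_0p$, let $\mR^\star$ be the aligning rotation in the definition of $\dist$ (so that ``$\mX^\star-\mX$'' is read as $\mX^\star\mR^\star-\mX$, as needed in the convergence analysis), put $\mE_i=\mX_i-\mX_i^\star\mR^\star$, and take any $\widetilde\nabla_{\calR}f(\mX)=\calP_{\T_\mX}(\widetilde\nabla\hat f(\mX))$ as above. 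Using self-adjointness of $\calP_{\T_\mX}$, then splitting $\calP_{\T_\mX}(\mX^\star\mR^\star-\mX)$ into $(\mX^\star\mR^\star-\mX)$ plus its normal component, then the convexity (subgradient) inequality for $\hat f$ together with the elementary fact applied blockwise, one gets
\[
\langle\widetilde\nabla_{\calR}f(\mX),\mX^\star\mR^\star-\mX\rangle
=\big\langle\widetilde\nabla\hat f(\mX),\calP_{\T_\mX}(\mX^\star\mR^\star-\mX)\big\rangle
\le\big(\hat f(\mX^\star\mR^\star)-\hat f(\mX)\big)+\tfrac12\sum_{i=1}^n\|\widetilde\nabla\hat f(\mX_i)\|_F\,\|\mE_i\|_F^2 .
\]
Here $\hat f(\mX^\star\mR^\star)=\hat f(\mX^\star)=f(\mX^\star)$ and $\hat f(\mX)=f(\mX)$ since $\mX,\mX^\star\in\so(d)^n$; and on the high-probability event of \Cref{lem:concentration of sets} each $\|\widetilde\nabla\hat f(\mX_i)\|_F\le 2|\calE_i|\le 4nq$, so the last sum is at most $2nq\,\dist_\infty(\mX,\mX^\star)\,\dist_1(\mX,\mX^\star)\le 2C_0\,npq\,\dist_1(\mX,\mX^\star)$.

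Finally, invoking \Cref{thm:weak sharpness} in the form $f(\mX)-f(\mX^\star)\ge\tfrac{npq}{8}\dist_1(\mX,\mX^\star)$ and choosing the implied constant $C_0$ in ``$\dist_\infty(\mX,\mX^\star)=\calO(p)$'' small enough (e.g.\ $C_0\le\tfrac1{32}$) so that the curvature term is $\le\tfrac{npq}{16}\dist_1(\mX,\mX^\star)$, the two estimates combine to
\[
\langle\widetilde\nabla_{\calR}f(\mX),\mX^\star\mR^\star-\mX\rangle\le-\tfrac{npq}{8}\dist_1(\mX,\mX^\star)+\tfrac{npq}{16}\dist_1(\mX,\mX^\star)=-\tfrac{npq}{16}\dist_1(\mX,\mX^\star),
\]
which is \eqref{ineq:regularity}. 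I expect the only genuinely delicate point to be the hidden-convexity identification in the first step: one must verify that $\partial f(\mX_i)$ exactly as defined in the algorithm --- including the factor $2$, both edge orientations, and the set-valued branch $f_{i,j}(\mX)=0$ --- really is $\partial_{\mX_i}\hat f(\mX)$ at points of $\so(d)^n$, since \eqref{ineq:regularity} is asserted for \emph{every} element of $\partial_{\calR}f(\mX)$. Everything else is the standard convex-subgradient-inequality-plus-$\calO(\dist^2)$-curvature argument, with the size of the error pinned down by the hypothesis $\dist_\infty(\mX,\mX^\star)=\calO(p)$ and the subgradient-norm bound from \Cref{lem:concentration of sets}.
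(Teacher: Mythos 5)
Your proposal is correct and follows essentially the same route as the paper's proof: combine the weak-sharpness bound with the convex subgradient inequality for the Euclidean subgradient, then control the normal-component error term by bounding $\|\calP_{\T^\perp_{\mX_i}}(\mX_i-\mX_i^\star)\|_F\le\tfrac12\|\mX_i-\mX_i^\star\|_F^2$ and $\|\widetilde\nabla f(\mX_i)\|_F=\calO(nq)$, absorbing it via the hypothesis $\dist_\infty(\mX,\mX^\star)=\calO(p)$. The one place you are more explicit than the paper is the ``hidden convexity'' step --- the paper simply invokes ``the convexity of $f$'' to write $f(\mX^\star)-f(\mX)\ge\langle\widetilde\nabla f(\mX),\mX^\star-\mX\rangle$, whereas you spell out the convex surrogate $\hat f(\mX)=\sum_{(i,j)\in\calE}\|\mX_i-\mY_{ij}\mX_j\|_F$ agreeing with $f$ on $\so(d)^n$ and check that its subdifferential matches $\partial f(\mX_i)$ (including the factor $2$); this is a worthwhile clarification, not a different argument.
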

This condition indicates that any Riemannian subgradient $\widetilde \nabla_{\calR} f(\mX)$ provides a descent direction pointing towards $\mX^\star$. However, it only holds for $\mX\in \so(d)^n$ satisfying $\dist_\infty(\mX, \mX^\star) = \calO(p)$. Our key idea for establishing local convergence is to show that the Riemannian subgradient update in $\ReSync$ is a contraction operator \emph{in both the Euclidean and infinity norm-induced distances} using \Cref{cor:bound_sub}, i.e., if $\mX^k$ lies in the local region, then $\mX^{k+1}$ also lies in the region. This idea motivates us to define two sequences of neighborhoods as follows:
\begin{equation}\label{eq:shrink sets}
    \mathcal{N}_F^k = \left\{\mX \mid \dist(\mX, \mX^{\star}) \leq \xi_k  \right\}\quad \text{and} \quad
    \mathcal{N}_\infty^k = \left\{\mX \mid \dist_\infty(\mX, \mX^{\star}) \leq \delta_k \right\}.
\end{equation} 
Here, $\xi_k = \xi_0\gamma^k, \delta_k = \delta_0\gamma^k$, where $\xi_0$, $\delta_0$, and $\gamma \in (0,1)$ will be specified later. Thus, these two sequences of sets $\{\mathcal{N}_F^k\}$ and $\{\mathcal{N}_\infty^k\}$ will linearly shrink to the ground-truth. It remains to show that if $\mX^k \in \mathcal{N}_F^k \cap \mathcal{N}_\infty^k$, then $\mX^{k+1} \in \mathcal{N}_F^{k+1} \cap \mathcal{N}_\infty^{k+1}$, which is summarized in the following theorem.

\begin{thm}[convergence analysis]\label{thm:convergence}
Suppose that $\delta_0 = \calO(p^2)$ and $\xi_0 = \calO(\sqrt{npq}\delta_0)$. Set  $\gamma = 1 - \frac{pq}{16}$ and $\mu_k = {\delta_k}/{n}$ in $\ReSync$. If $\mX^k \in \mathcal{N}_F^k \cap \mathcal{N}_\infty^k$ for any $k\geq 0$, then with probability at least $1-\calO(1/n)$, we have
\[
   \mX^{k+1} \in \mathcal{N}_F^{k+1} \cap \mathcal{N}_\infty^{k+1}.
\]
\end{thm}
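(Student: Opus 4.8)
\textbf{Proof plan for \Cref{thm:convergence}.}
The plan is to show that one Riemannian subgradient step of $\ReSync$ is a contraction, by the factor $\gamma=1-\tfrac{pq}{16}$, in \emph{both} $\dist(\cdot,\mX^\star)$ and $\dist_\infty(\cdot,\mX^\star)$, so that $\mX^k\in\mathcal{N}_F^k\cap\mathcal{N}_\infty^k$ forces $\mX^{k+1}\in\mathcal{N}_F^{k+1}\cap\mathcal{N}_\infty^{k+1}$. Throughout, fix $\mR^\star=\calP_{\so(d)}\big((\mX^k)^\top\mX^\star\big)$ and write $\mZ:=\mX^\star\mR^\star\in\so(d)^n$, so that $\dist(\mX^k,\mX^\star)=\|\mX^k-\mZ\|_F$ and $\dist_\infty(\mX^k,\mX^\star)=\max_i\|\mX_i^k-\mZ_i\|_F$. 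All the probabilistic content — \Cref{cor:bound_sub}, \Cref{lem:concentration of sets}, and a matrix-concentration bound for the corruption part of the subgradient used below — is collected into a single event of probability $1-\calO(1/n)$ on which the rest of the argument is deterministic and uniform over $k$. The first move is to linearize the retraction: the standard second-order bound for the QR retraction on $\so(d)$, $\|\Retr_{\mX}(\mV)-(\mX+\mV)\|_F\le c\|\mV\|_F^2$ for tangent $\mV$, lets me write the update blockwise as $\mX_i^{k+1}=\mX_i^k-\mu_k\vg_i^k+\mE_i^k$ with $\vg_i^k:=\widetilde\nabla_\calR f(\mX_i^k)$ and $\|\mE_i^k\|_F\le c\,\mu_k^2\|\vg_i^k\|_F^2$, turning the manifold step into a perturbed Euclidean subgradient step.

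The technical heart is a sharp bound on $\|\vg^k\|_F$ over $\mathcal{N}_F^k\cap\mathcal{N}_\infty^k$. I would split, for each block, $\vg_i^k=\calP_{\T_{\mX_i^k}}\!\big(2\sum_{j\in\calA_i}\vs_{ij}+2\sum_{j\in\calA_i^c}\vs_{ij}\big)$ with $\vs_{ij}\in\partial f_{i,j}(\mX_i^k)$, $\|\vs_{ij}\|_F\le1$. A crude per-block bound $\max_i\|\vg_i^k\|_F\le 2|\calE_i|=\calO(nq)$ follows at once from \Cref{lem:concentration of sets}. For the aggregate one must be sharper. On the corruption terms, the uniformity of $\mO_{ij}$ on $\so(d)$ makes $\E[\vs_{ij}]$ (after decoupling $\mX^k$ from the outliers by comparison with $\mZ$, up to a Lipschitz correction) parallel to $\mZ_i$, a direction that $\calP_{\T_{\mX_i^k}}$ annihilates up to an $\calO(\dist_\infty)$ term per edge, so only the zero-mean fluctuation survives and is controlled by matrix concentration, giving $\big\|\calP_{\T_{\mX_i^k}}\big(\sum_{j\in\calA_i^c}\vs_{ij}\big)\big\|_F=\calO\big(nq\,\dist_\infty+\sqrt{nq\log n}\big)$. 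On the true-observation terms, the residuals $\mX_i^k(\mX_j^k)^\top-\mX_i^\star\mX_j^{\star\top}$ are $\calO(\dist_\infty)$-small in the local region; combining the per-block bound $\calO(nq)$ with the fact that only $\calO\!\big(\dist^2/\delta_k^2\big)$ blocks can carry error of order $\delta_k$ (since $\dist^2=\sum_i\|\mX_i^k-\mZ_i\|_F^2$), one arrives at $\|\vg^k\|_F^2=\calO\!\big(n^2p^2q^2\,\dist^2/\delta_k^2+n^2q\log n\big)$, which under $\delta_0=\calO(p^2)$, $\xi_0=\calO(\sqrt{npq}\,\delta_0)$ and $\mu_k=\delta_k/n$ is exactly of the size the next two steps can absorb. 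I expect this bound, together with its blockwise analogue used for the $\ell_\infty$ part, to be the main obstacle.

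For the Frobenius contraction, $\dist^2(\mX^{k+1},\mX^\star)\le\|\mX^{k+1}-\mZ\|_F^2$ and the linearization give
\[
\dist^2(\mX^{k+1},\mX^\star)\le\dist^2(\mX^k,\mX^\star)-2\mu_k\langle\vg^k,\mX^k-\mZ\rangle+\mu_k^2\|\vg^k\|_F^2+R_k,
\]
where $R_k$ collects the cross terms and $\|\mE^k\|_F^2$ and is of lower order than the slack by the two subgradient bounds and the retraction estimate. For the linear term, \Cref{cor:bound_sub} gives $\langle\vg^k,\mX^k-\mZ\rangle\ge\tfrac{npq}{16}\dist_1(\mX^k,\mX^\star)$, and — crucially — since $\mX^k\in\mathcal{N}_\infty^k$ the interpolation $\dist_1\ge\dist^2/\dist_\infty\ge\dist^2/\delta_k$ together with $\mu_k=\delta_k/n$ turns this into a contribution $\le-\tfrac{pq}{8}\dist^2(\mX^k,\mX^\star)$; since $\gamma^2=1-\tfrac{pq}{8}+\tfrac{p^2q^2}{256}$, this accounts for the target factor up to a slack of $\tfrac{p^2q^2}{256}\xi_k^2$ (worst case $\dist(\mX^k,\mX^\star)=\xi_k$). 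Bounding $\mu_k^2\|\vg^k\|_F^2$ by the estimate above with $\dist(\mX^k,\mX^\star)\le\xi_k$ and the prescribed $\xi_0,\delta_0$ (equivalently $\mu_0$) shows the right-hand side fits under that slack once the implied constants are fixed appropriately, yielding $\dist(\mX^{k+1},\mX^\star)\le\gamma\,\xi_k=\xi_{k+1}$.

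Finally, the $\ell_\infty$ contraction needs a \emph{blockwise} analogue of \Cref{cor:bound_sub}. Let $m$ attain $\|\mX_m^k-\mZ_m\|_F=\dist_\infty(\mX^k,\mX^\star)$. If $\dist_\infty$ is below a small multiple of $\delta_k$, the crude bound $\|\mX_m^{k+1}-\mZ_m\|_F\le\|\mX_m^k-\mZ_m\|_F+\mu_k\|\vg_m^k\|_F+\|\mE_m^k\|_F$ already gives $\le\gamma\delta_k$, since $\mu_k\|\vg_m^k\|_F=\calO(q\delta_k^2+\delta_k\sqrt{q\log n/n})=o(\delta_k)$ by the sharp per-block bound. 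Otherwise I would expand $\|\mX_m^{k+1}-\mZ_m\|_F^2$ as in the Frobenius step and produce a descent at block $m$ by a first-order Lie-algebra computation: writing $\mX_i^k=\mZ_i\exp(-\vphi_i)$ with $\|\vphi_i\|_F\asymp\|\mX_i^k-\mZ_i\|_F$, the true-observation part of $\langle\vg_m^k,\mX_m^k-\mZ_m\rangle$ equals $2\sum_{j\in\calA_m}\langle\vphi_m-\vphi_j,\vphi_m\rangle/\|\vphi_m-\vphi_j\|_F$ plus higher-order terms, and since $m$ is the worst block each summand is $\ge\tfrac12\|\vphi_m-\vphi_j\|_F\ge0$; because $\mX^k\in\mathcal{N}_F^k$ forces all but $\calO(\xi_k^2/\delta_k^2)$ of the $\vphi_j$ to satisfy $\|\vphi_j\|_F\le\tfrac12\dist_\infty$ — a small fraction of $|\calA_m|\asymp npq$ when the constant in $\xi_0=\calO(\sqrt{npq}\,\delta_0)$ is small enough — a positive fraction of the neighbors of $m$ contribute $\ge\tfrac14\dist_\infty$, giving a descent $\gtrsim npq\,\dist_\infty$, while the corruption and higher-order terms are dominated exactly as in the subgradient-bound step. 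With $\|\vg_m^k\|_F=\calO(nq\,\dist_\infty+\sqrt{nq\log n})$ and $\mu_k=\delta_k/n$ this yields $\|\mX_m^{k+1}-\mZ_m\|_F\le\gamma\delta_k$. It then remains to pass from the fixed alignment $\mZ=\mX^\star\mR^\star(\mX^k)$ to $\dist_\infty(\mX^{k+1},\mX^\star)$, which costs $\|\mR^\star(\mX^{k+1})-\mR^\star(\mX^k)\|_F=\calO\!\big(\|\mX^{k+1}-\mX^k\|_F/\sqrt n\big)=\calO(p^{3/2}q^{3/2}\delta_k+\sqrt{q\log n/n}\,\delta_k)$, of smaller order than the margin $\tfrac{pq}{16}\delta_k$. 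Combining the last two paragraphs gives $\mX^{k+1}\in\mathcal{N}_F^{k+1}\cap\mathcal{N}_\infty^{k+1}$.
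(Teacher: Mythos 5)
Your plan is correct and matches the paper's high-level strategy: split the claim into a Frobenius contraction and an $\ell_\infty$ contraction, control the QR retraction via its second-order boundedness, and for the Frobenius part combine the quadratic expansion, \Cref{cor:bound_sub}, and the interpolation $\dist_1\geq\dist^2/\delta_k$ enabled by $\calN_\infty^k$-membership. Two differences are worth noting. First, in the Frobenius step you invest in a sharper bound on $\|\vg^k\|_F^2$ that tracks $\dist^2/\delta_k^2$; the paper simply sums the per-block bound $\|\widetilde\nabla_\calR f(\mX_i^k)\|_F\leq(1+2\epsilon)npq$ (true-observation part via $|\calA_i|$, outlier part via \Cref{lem:con_of_X_OX}) to get $\|\vg^k\|_F^2\leq(1+2\epsilon)^2n^3p^2q^2$, which with $\mu_k=\delta_k/n$ and $\xi_k=\Theta(\sqrt{npq}\,\delta_k)$ already gives $\mu_k^2\|\vg^k\|_F^2=\calO(pq\,\xi_k^2)$ — the refinement you aim for is unnecessary work here. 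Second, for the $\ell_\infty$ step, while your case split (crude triangle-inequality bound when the deviation is a small multiple of $\delta_k$, descent argument near the boundary) mirrors the paper's three-set partition $\calI_1,\calI_2,\calI_3$, your descent argument routes through the Lie-algebra parametrization $\mX_i=\mZ_i\exp(-\vphi_i)$ and a quadratic expansion at the worst block, whereas the paper works directly with the subgradient: it isolates the neighbours in $\calA_i\cap\calI_1$, writes that sum as $\sigma(\mX_i-\mId)$ plus a remainder of norm $\leq\tfrac{\delta_k\sigma}{4}$ with $\sigma=\sum_{j\in\calA_i\cap\calI_1}\|\mX_i-\mX_j\|_F^{-1}\geq\tfrac{4(1-2\epsilon)npq}{5\delta_k}$, and then reads off $\|\mX_i^k-\mu_k\vg_i^k-\mId\|_F\leq(1-\mu_k\sigma)\delta_k+\mu_k(\tfrac{\delta_k\sigma}{2}+\epsilon npq+5\sqrt{2\delta_0}nq)\leq\delta_k-\tfrac{\mu_knpq}{4}$ by a first-order triangle inequality; this is a bit cleaner and avoids both the exponential map and the second square-expansion. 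Both arguments, however, rest on exactly the same structural observation: $\mX^k\in\calN_F^k$ forces all but $\calO(\xi_k^2/\delta_k^2)=\calO(npq)$ blocks to lie well inside the $\ell_\infty$ ball, so a positive fraction of the true-edge neighbours of the worst block pull it inward. Your final paragraph about the change of alignment $\mR^\star$ between iterations is a point the paper glosses over (it fixes the gauge at step $k$ and reads $\max_i\|\mX_i^{k+1}-\mId\|_F$ as an upper bound); explicitly tracking $\|\mR^\star(\mX^{k+1})-\mR^\star(\mX^k)\|_F$ is a sound way to close that gap, though at the cost of an extra estimate.
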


\textbf{Proof outline of \cref{thm:convergence}.}  The proof consisted of two parts. On the one hand, we need to show that $\mX^{k+1} \in \mathcal{N}_F^{k+1}$, which can be achieved by applying \Cref{cor:bound_sub}. On the other hand, in order to show that $\mX^{k+1} \in \mathcal{N}_\infty^{k+1}$, we need a good estimate of each block of $\widetilde \nabla_{\calR} f(\mX)$. See \Cref{appen:convergence}.

Having developed the necessary tools, we are now ready to prove \Cref{thm:main result}. 

\textbf{Proof of \Cref{thm:main result}.} Based on \Cref{thm:init}, we know that $\mX^0 \in \mathcal{N}_F^0 \bigcap \mathcal{N}^0_\infty$ if $\xi_0$ and $\delta_0$ satisfy
\begin{align}\label{eq:init_condition}
\xi_0 = \calO\left(\frac{\sqrt{\log n}}{p\sqrt{q}} \right) \quad \text{and} \quad \delta_0 = \calO\left( \frac{\sqrt{\log n}}{p\sqrt{nq}} \right).
\end{align}
According to \Cref{thm:convergence}, by choosing $\delta_0 = \Theta(p^2)$ and $\xi_0 = \Theta(\sqrt{np^5q})$, condition \eqref{eq:init_condition} holds when $p^7q^2 = \Omega\left(\log n /n\right)$. This completes the proof of \Cref{thm:main result}.

\section{Experiments}\label{sec:experiments}

In this section, we conduct experiments on $\ReSync$ for solving the $\RRS$ problem on both synthetic and real data, providing empirical support for our theoretical findings. Our experiments are conducted on a personal computer with a 2.90GHz 8-core CPU and 32GB memory. All our experiment results are averaged over 20 independent trials.  Our code is available at \url{https://github.com/Huikang2019/ReSync}.

\subsection{Synthetic Data}
We consider the rotation group $\so(3)$ in all our experiments. We generate $\mX_1^\star, \ldots, \mX_n^\star$ by first generating matrices of the same dimension with i.i.d. standard Gaussian entries and then projecting each of them onto $\so(3)$. The underlying graph, outliers, and relative rotations in the measurement model \eqref{eq:measurement model} are generated according to the $\RCM$ as described in \Cref{sec:setup}. In our experiments, we also consider the case where the true observations are contaminated by additive noise, namely, $\{\mY_{i,j}\}_{(i,j)\in \calA}$ in \eqref{eq:measurement model} is generated using the formula
\begin{equation}\label{eq:noise}
  \mY_{i,j} = \calP_{\so(3)} \left(\mX_i^{\star\top}\mX_j^\star + \sigma \mG_{i,j}\right) \quad \text{for} \quad (i,j)\in \calA,
\end{equation}
where $\mG_{i,j}$ consists of i.i.d. entries following the standard Gaussian distribution and $\sigma \geq 0$ controls the variance level of the noise. 

\textbf{Convergence verification of ReSync.} We evaluate the convergence performance of $\ReSync$ with the noise level $\sigma = 0$ in \eqref{eq:noise}. We set $p = q = (\log n/n)^{1/3}$ in the measurement model \eqref{eq:measurement model}, which satisfies $p^2q = \log n/n$. We use the initial step size $\mu_0 = 1/npq$ and the decaying factor $\gamma\in\{0.7,0.8,0.85,0.90, 0.95, 0.98\}$ in $\ReSync$. We test the performance for various $n$ selected from $\{400, 600, 800, 1000\}$. \Cref{fig:convergence} displays the experiment results. It can be observed that (i) $\ReSync$ converges linearly to ground-truth rotations for a wide range of $\gamma$ and (ii) a smaller $\gamma$ often leads to faster convergence speed. These corroborate our theoretical findings. However, it is worth noting that excessively small $\gamma$ values may result in an early stopping phenomenon (e.g., $\gamma \leq 0.8$ when $n = 400$). In addition, $\ReSync$ performs better with a larger $n$, as it allows for a smaller $\gamma$ (e.g., $\gamma = 0.7$ when $n = 1000$) and hence converges to the ground-truth rotations faster.


\textbf{Comparison with the state-of-the-arts.} We next compare $\ReSync$ with state-of-the-art synchronization algorithms, including IRLS\_L12 \cite{chatterjee2017robust}, MPLS \cite{shi2020message}, CEMP\_GCW \cite{lerman2021robust,shi2022robust}, DESC \cite{shi2022robust}, and LUD \cite{wang2013exact}. We obtain the implementation of the first four algorithms from \url{https://github.com/ColeWyeth/DESC}, while LUD's implementation is obtained through private communication with its authors. In our comparisons, we use their default parameter settings. For $\ReSync$, we set the initial step size to $\mu_0 = 1/npq$ and the decaying factor to $\gamma = 0.95$, as suggested by the previous experiment. We fix $n  = 200$ and vary the true observation ratio $p$ (or the observation ratio $q$) while keeping $q = 0.2$ (or $p = 0.2$) fixed. We display the experiment results for $\sigma =0$  and $\sigma = 1$ in \Cref{subfig:vary p no noise,subfig:vary p noisy}, respectively, where $p$ is selected from $\{0.2, 0.3, 0.4,\ldots, 1\}$. When $\sigma = 0$, $\ReSync$ achieves competitive performance compared to other robust synchronization algorithms. When the additive noise level is $\sigma = 1$, $\ReSync$ outperforms other algorithms. In \Cref{subfig:vary q no noise,subfig:vary q noisy}, we present the results with varying $q$ chosen from $\{0.2,0.3,0.4,\ldots, 1\}$ for noise-free ($\sigma = 0$) and noisy ($\sigma = 1$) cases, respectively. In the noise-free case, DESC performs best when $q<0.5$, while $\ReSync$ slightly outperforms others when $q\geq 0.5$. In the noisy case, it is clear that $\ReSync$ achieves the best performance for a large range of $q$.

\begin{figure}[t]
	\centering
	\begin{minipage}[t]{0.24\linewidth}
		\includegraphics[width=1.0\textwidth]
            {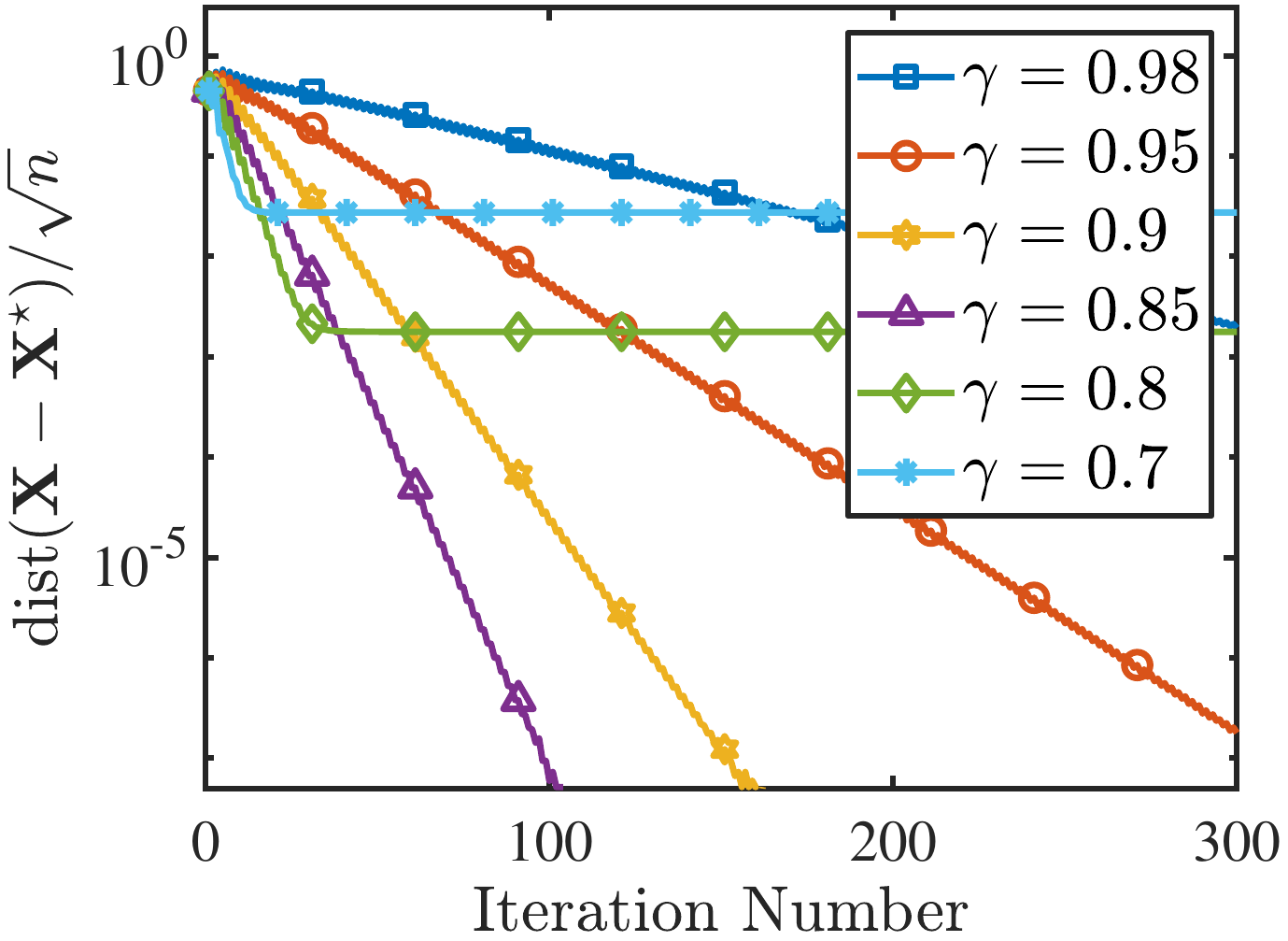}
            \subcaption{$n = 400$}
		\label{subfig:n400}
	\end{minipage}
 \hfill 
	\begin{minipage}[t]{0.24\linewidth}
		\includegraphics[width=1.0\textwidth]{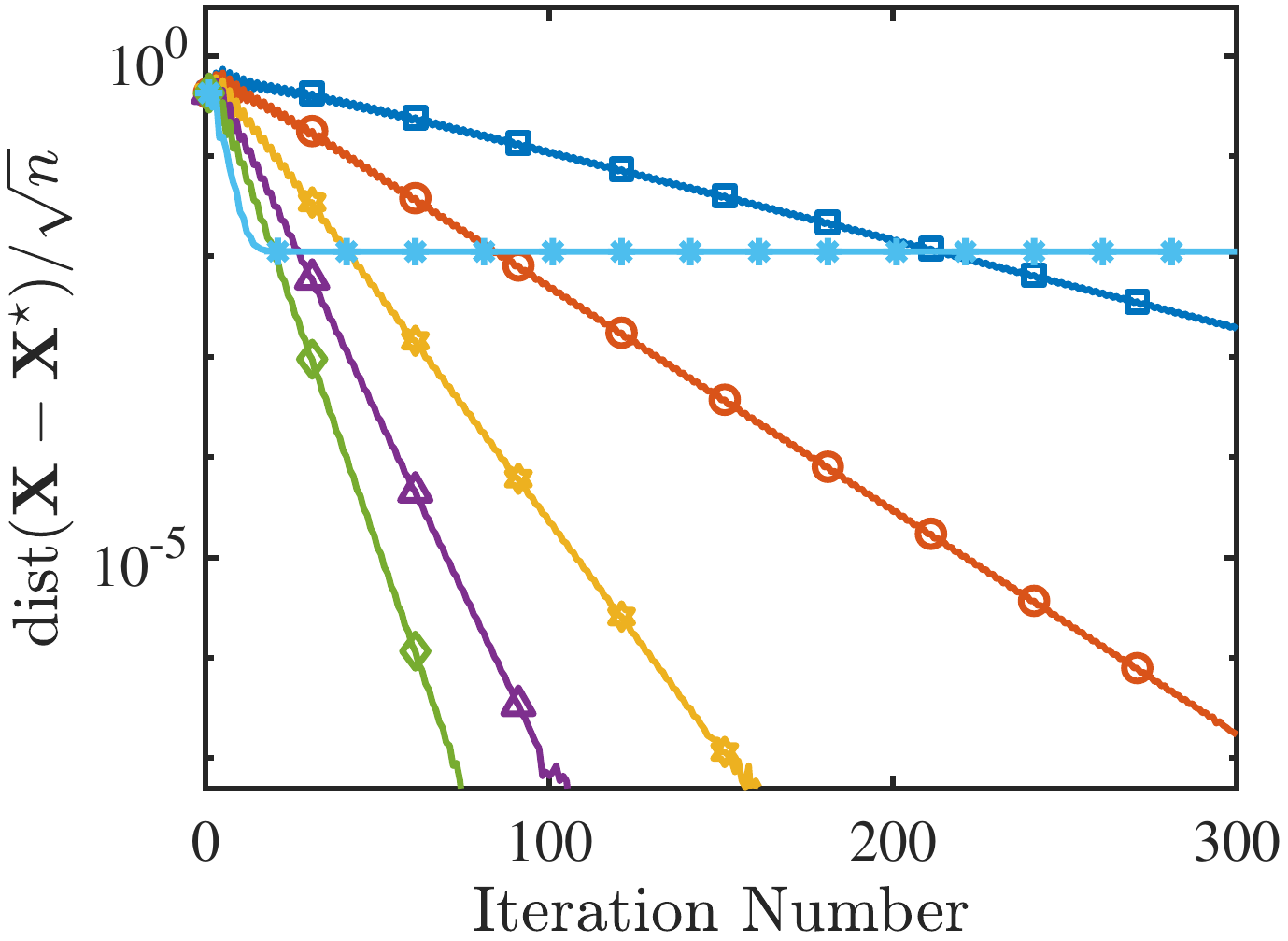}
		\subcaption{$n = 600$}
		\label{subfig:n600}
	\end{minipage}
 \hfill
	\begin{minipage}[t]{0.24\linewidth}
		\includegraphics[width=1.0\textwidth]{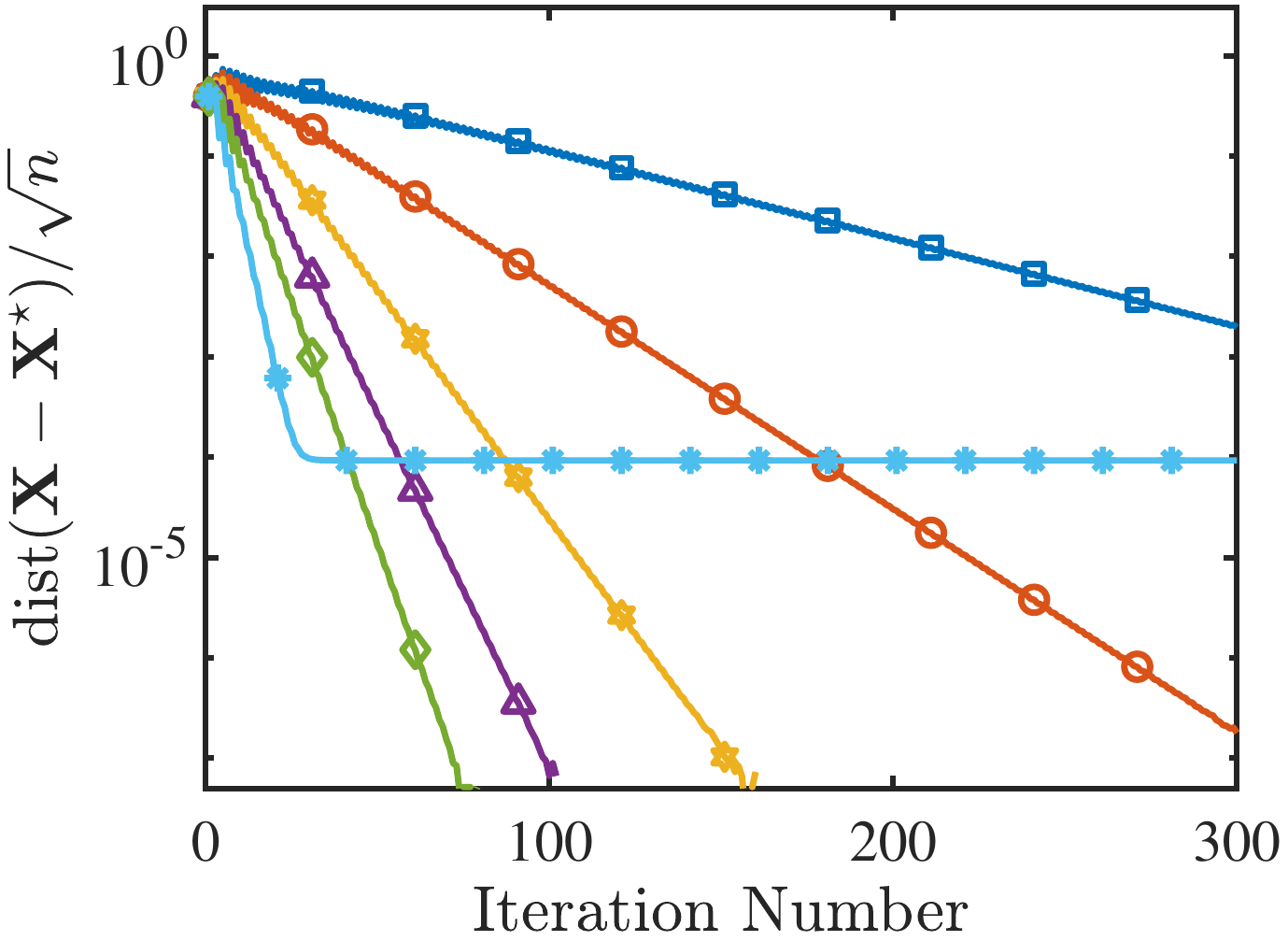}
		\subcaption{$n = 800$}
		\label{subfig:n800}
	\end{minipage}
 \hfill
	\begin{minipage}[t]{0.24\linewidth}
		\includegraphics[width=1.0\textwidth]{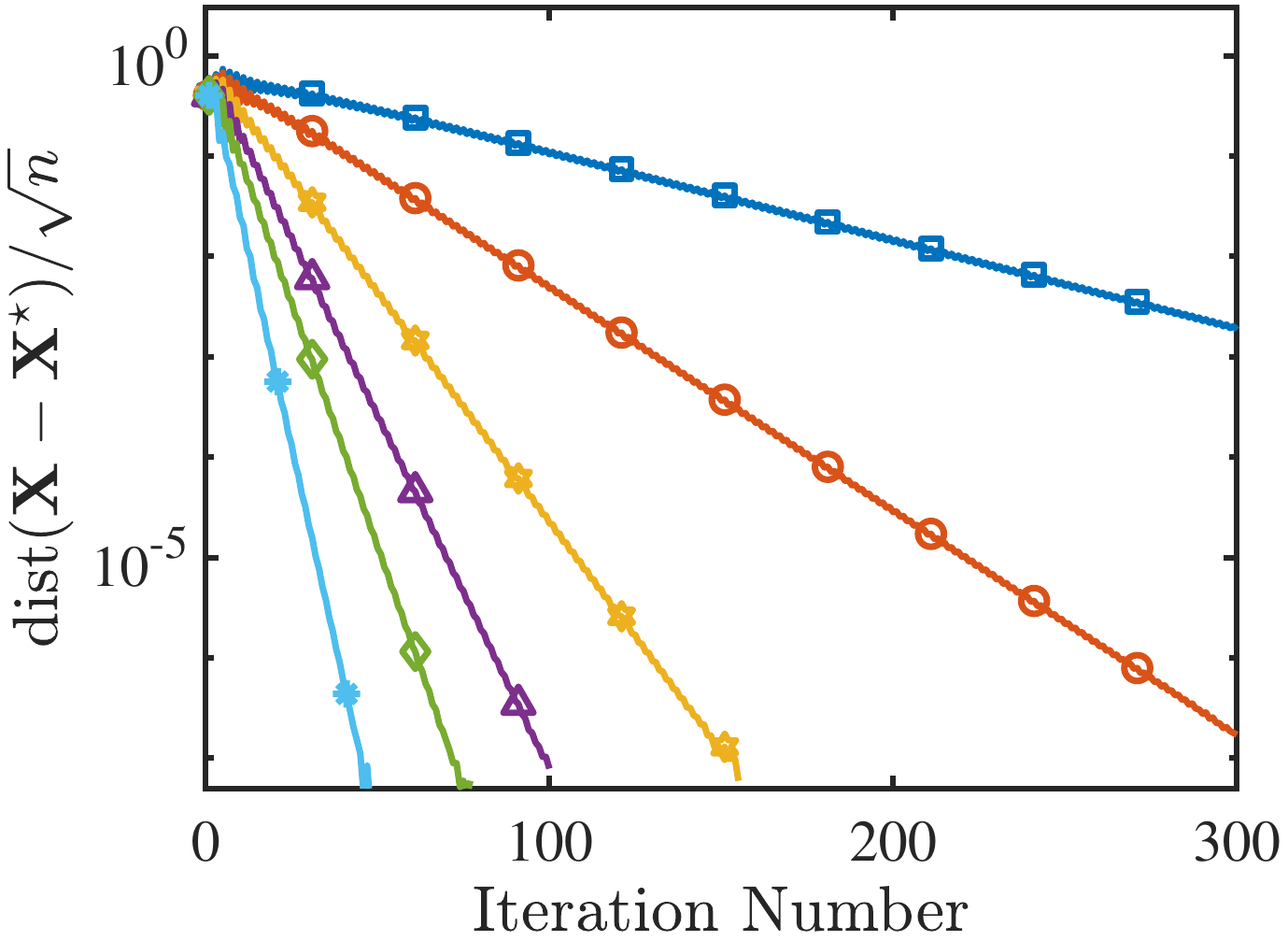}
		\subcaption{$n = 1000$}
		\label{subfig:n1000}
	\end{minipage}
	\caption{Convergence of $\ReSync$ with $p = q = (\log n / n)^{1/3}$. }	
	\label{fig:convergence}
\end{figure}


\begin{figure}[t]
	\centering
	\begin{minipage}[t]{0.24\linewidth}
		\includegraphics[width=1\textwidth]{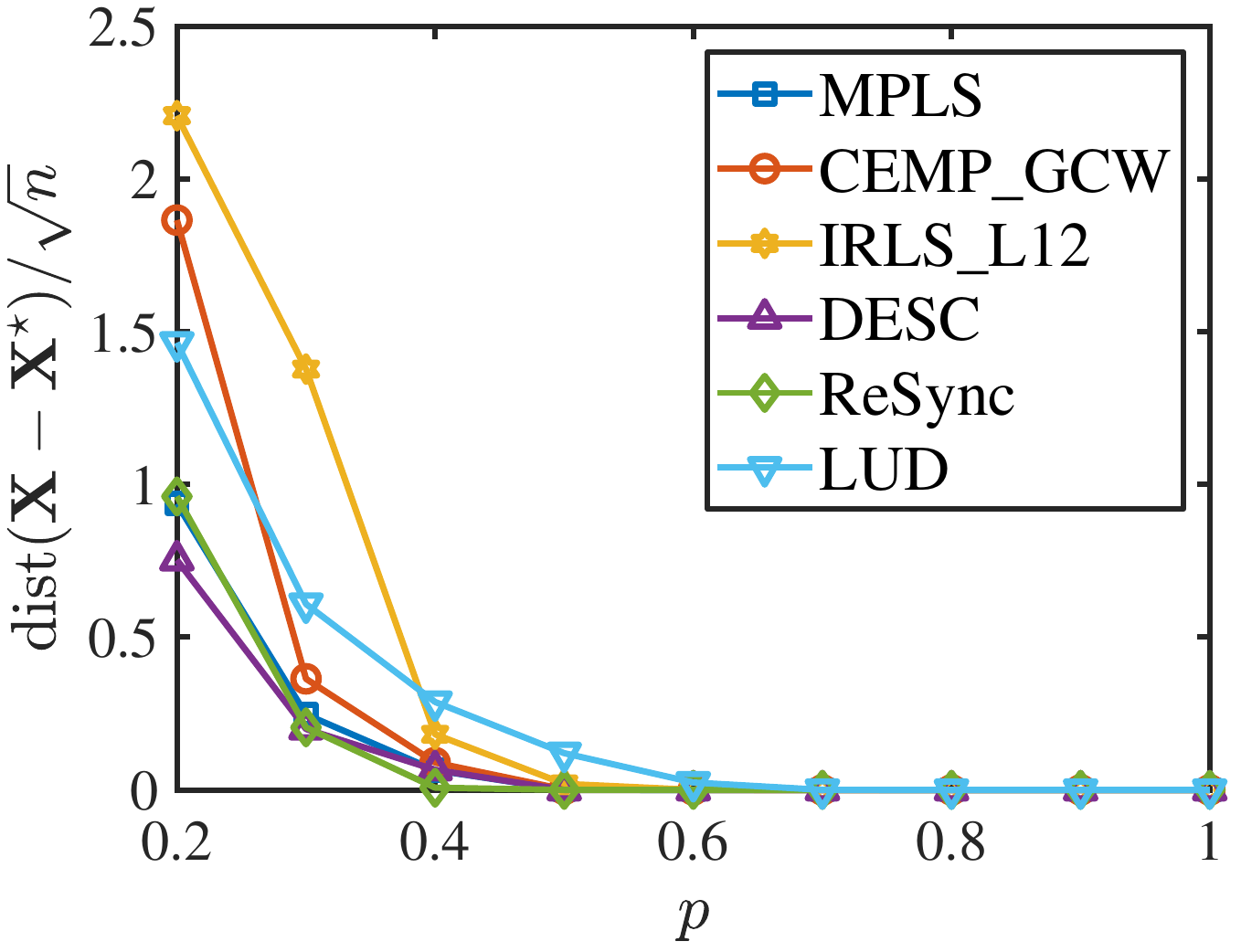}
		\subcaption{$q=0.2, \sigma = 0.0$}
		\label{subfig:vary p no noise}
	\end{minipage}
 \hfill
	\begin{minipage}[t]{0.24\linewidth}
		\includegraphics[width=1\textwidth]{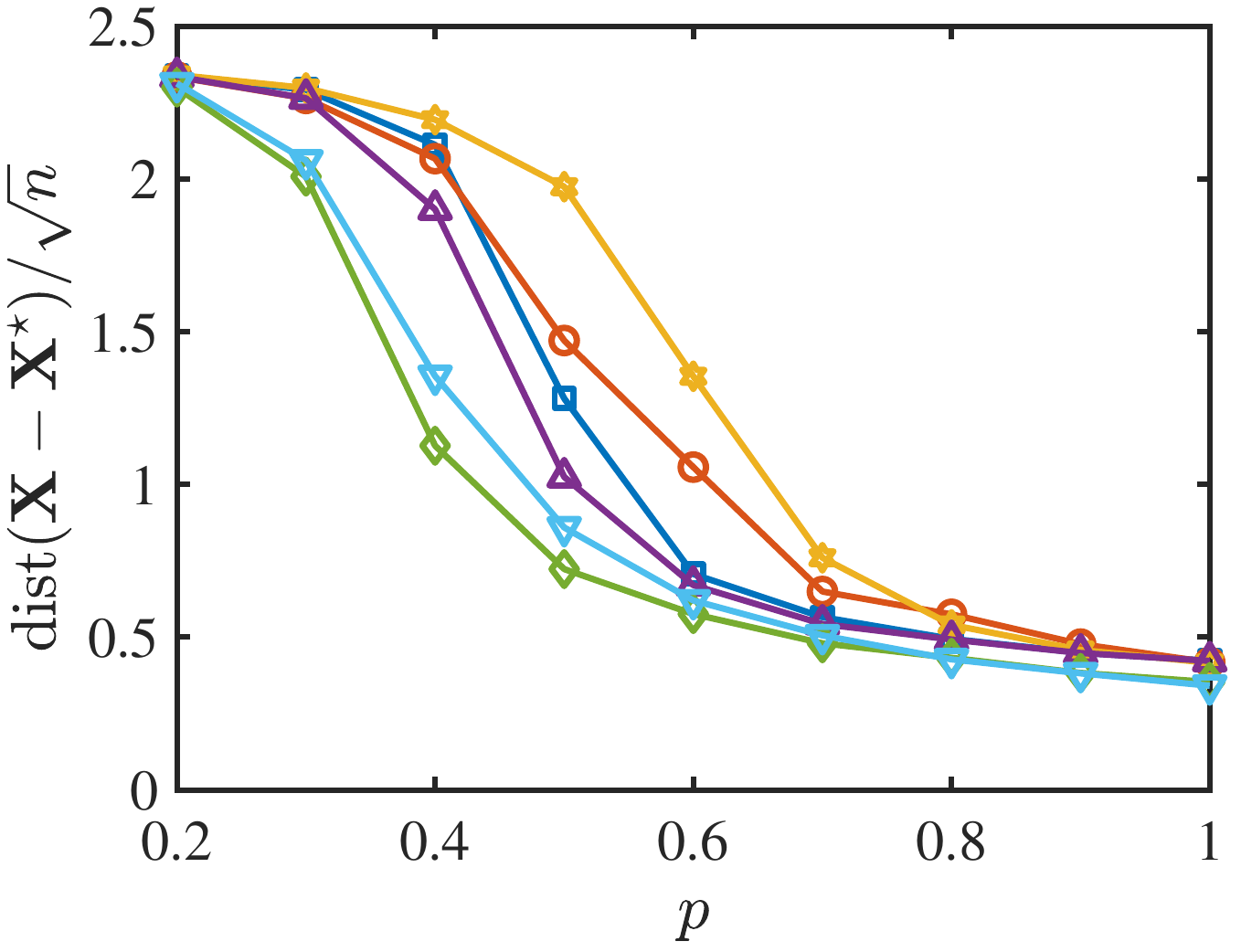}
		\subcaption{$q=0.2, \sigma = 1.0$}
		\label{subfig:vary p noisy}
	\end{minipage}
 \hfill
	\begin{minipage}[t]{0.24\linewidth}
		\includegraphics[width=1\textwidth]{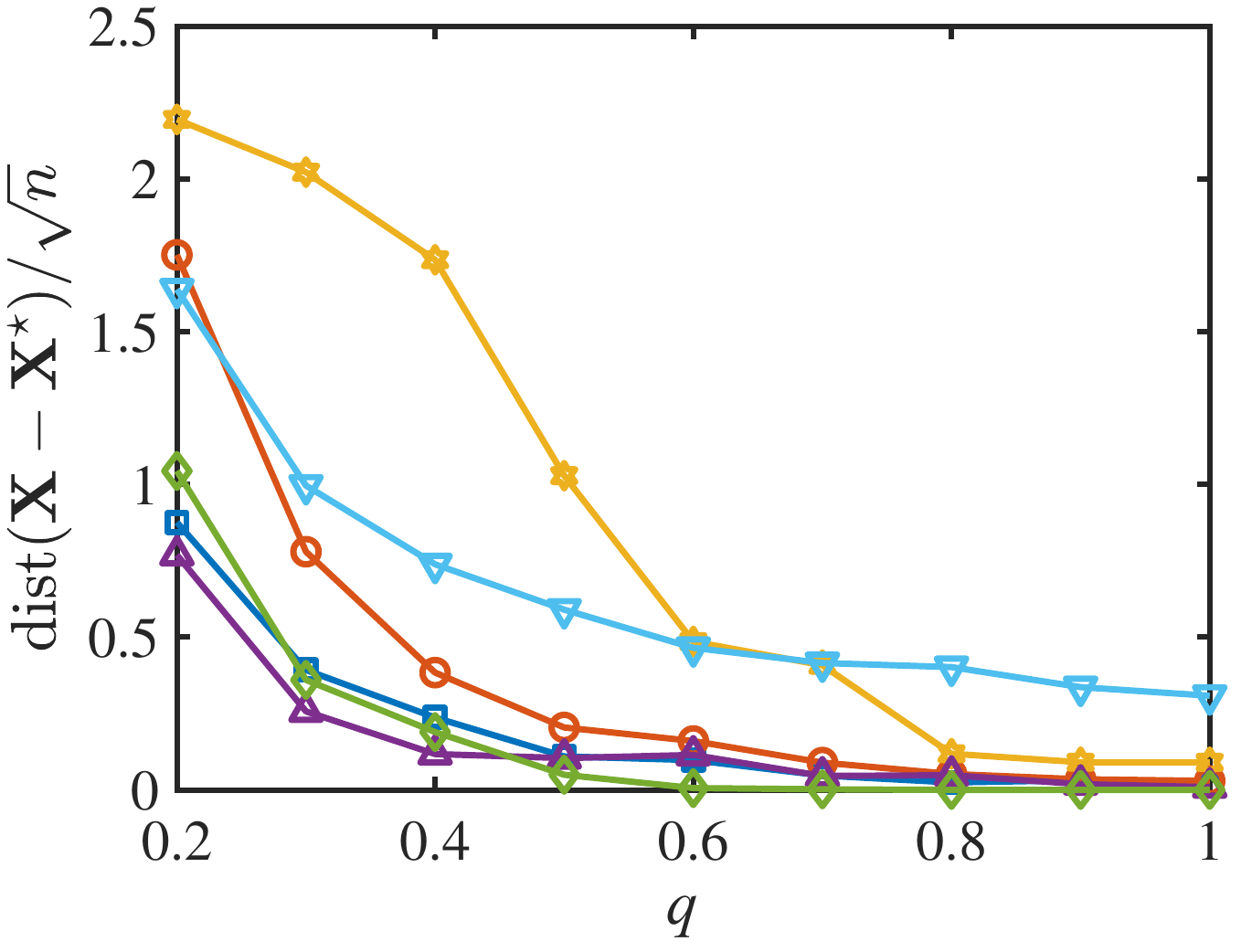}
		\subcaption{$p=0.2, \sigma = 0.0$ }
		\label{subfig:vary q no noise}
	\end{minipage}
 \hfill
	\begin{minipage}[t]{0.24\linewidth}
		\includegraphics[width=1\textwidth]{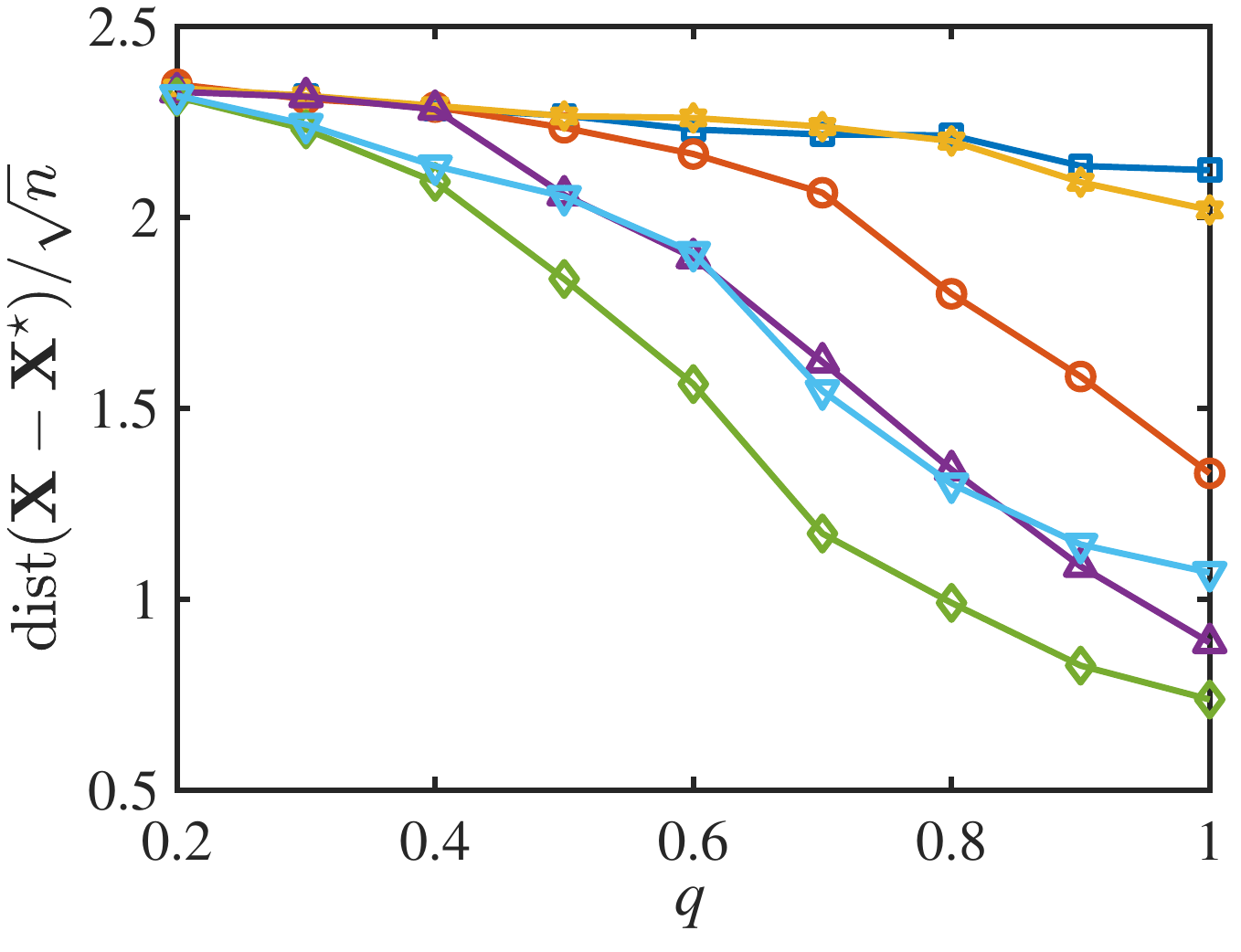}
		\subcaption{$p=0.2, \sigma = 1.0$ }
		\label{subfig:vary q noisy}
	\end{minipage}

	\caption{ Comparison with state-of-the-art synchronization algorithms.}	

	\label{fig:comparison}
\end{figure}
\subsection{Real Data}
We consider the global alignment problem of three-dimensional scans from the Lucy dataset, which is a down-sampled version of the dataset containing 368 scans with a total number of 3.5 million triangles. We refer to \cite{wang2013exact} for more details about the experiment setting. We apply three algorithms LUD \cite{wang2013exact}, DESC \cite{shi2022robust} and our ReSync on this dataset since they have the best performance on noisy synthetic data. As \Cref{fig:lucy} shows, ReSync outperforms the other two methods.
\begin{figure}[!htp]
	\centering
	\includegraphics[width=0.55\textwidth]
	{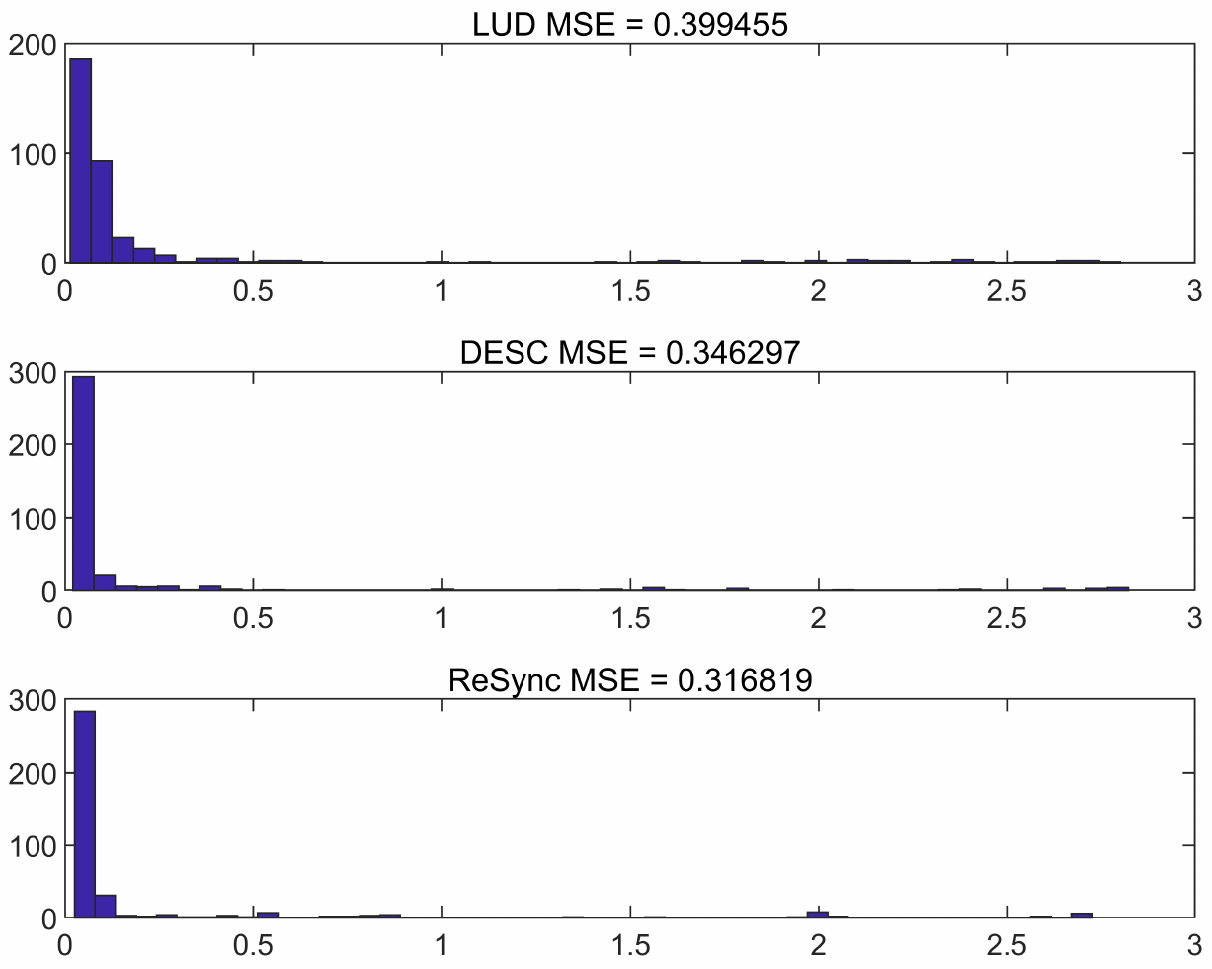}
	\caption{Histogram of the unsquared residuals of LUD, DESC, and ReSync for the Lucy dataset.}
 \label{fig:lucy}
\end{figure}

\section{Conclusion and Discussions on Limitations} \label{sec:conclusion}

In this work, we introduced $\ReSync$, a Riemannian subgradient-based algorithm with spectral initialization for solving $\RRS$. We established strong theoretical results for $\ReSync$ under the $\RCM$. In particular, we first presented an initialization guarantee for $\SpectrIn$, which is a procedure embedded in $\ReSync$ for initialization. Then, we established a problem-intrinsic property called weak sharpness for our nonsmooth nonconvex formulation, which is of independent interest. Based on the established weak sharpness property, we derived linear convergence of $\ReSync$ to the underlying rotations once it is initialized in a local region. Combining these theoretical results demonstrates that $\ReSync$ converges linearly to the ground-truth rotations under the $\RCM$.

\textbf{Limitations.} Our overall guarantee in \Cref{thm:main result} requires the sample complexity of $p^7q^2 = \Omega (\log n / n)$, which does not match the currently best known lower bound $p^2q = \Omega (\log n / n)$ for exact recovery \cite{singer2011angular,chen2014information}. We showed in Theorem 2 that approximate recovery with an optimal sample complexity is possible. Moreover, we showed in Theorem 3 that exact recovery with $p^2q^2 = \Omega( \log n/n )$ is possible if we have a global minimizer of the objective function of problem (2) within a certain local region. However, due to the nonconvexity of problem (2), it is non-trivial to obtain the said minimizer. We circumvented this difficulty by establishing the linear convergence of $\ReSync$ to a desired minimizer in \Cref{thm:convergence}. Nevertheless, a strong requirement on initialization is needed, which translates to the weaker final complexity result of $p^7q^2 = \Omega (\log n / n)$. 

Although our theory allows for $p\to 0$ as $n\to \infty$, our argument relies heavily on the randomness of the outliers $\{\mO_{i,j}\}$ and the absence of additive noise. In practice, adversarial outliers that arbitrarily corrupt a measurement and additive noise contamination are prevalent. It remains unknown how well $\ReSync$ performs in such scenarios.

The above challenges are significant areas for future research and improvements.

\newpage
\begin{ack}
The authors thank Dr. Zengde Deng (Cainiao Network) and Dr. Shixiang Chen (University of Science and Technology of China) for providing helpful advice. They also thank the reviewers for their insightful comments, which have helped greatly to improve the quality and presentation of the manuscript. 

Huikang Liu is supported in part by the National Natural Science Foundation of China (NSFC) Grant 72192832. Xiao Li is supported in part by the National Natural Science Foundation of China (NSFC) under grants 12201534 and 72150002, and in part by the Shenzhen Science and Technology Program under grant RCBS20210609103708017. Anthony Man-Cho So is supported in part by the Hong Kong Research Grants Council (RGC) General Research Fund (GRF) project CUHK 14205421. 
\end{ack}

\bibliography{references}
\bibliographystyle{plain}

\newpage
\tableofcontents

\newpage
\appendix

\section{Full Proof of \Cref{thm:init}} \label{appen:initialization}
In this section, we present the full proof of \Cref{thm:init}. We will use the notation defined in the proof outline of \Cref{thm:init} in \Cref{sec:init}.

Firstly, we know that
\begin{align}\label{def:noise}
\mW_{ij} = \begin{cases}
	(1 - pq)\mX_i^{\star}\mX_j^{\star \top},\quad &\text{with probablity } pq, \\
	\mO_{ij} - pq\mX_i^{\star}\mX_j^{\star \top},\quad  &\text{with probablity } (1-p)q,\\
	- pq\mX^\star_i\mX_j^{\star \top},& \text{otherwise}.
\end{cases}
\end{align}
Since $\mO_{ij}$ is assumed to be uniformly distributed on $\so(d)$ in the RCM, given any matrix $\mQ \in \so(d)$, it is easy to see that $\mO_{ij} \mQ$ is also uniformly distributed on $\so(d)$, so we have 
\begin{align*}
    \mathbb{E}(\mO_{ij}) = \mathbb{E}(\mO_{ij} \mQ) = \mathbb{E}(\mO_{ij}) \cdot \mQ, \quad \forall \mQ \in \so(d).
\end{align*}
Let $\mE_{kl} \in \so(d), k\neq l$ denote the diagonal matrix whose $k$-th and $l$-th diagonal entries are $-1$ and others are $1$, then we have $\mathbb{E}(\mO_{ij}) = \mathbb{E}(\mO_{ij}) \cdot \mE_{kl}$, which implies
\begin{align*}
    d \mathbb{E}(\mO_{ij}) = \mathbb{E}(\mO_{ij}) \cdot \left(\mE_{12} + \mE_{23} + \cdots + \mE_{d1}\right) = (d-4) \mathbb{E}(\mO_{ij}).
\end{align*}
Thus, we have $\mathbb{E}(\mO_{ij}) = \bm{0}$. Then, it is easy to see that $\mathbb{E}(\mW_{ij}) =0$ and 
\begin{align}
    \var(\mW_{ij}) =(1-pq)^2 \mId + (1-q)p^2q^2\mId + (1-p)q (1+p^2q^2)\mId = q(1 - p^2q)\mId.
\end{align}
Based on the above calculations, we can derive the following Lemma, which is a direct result of the matrix Bernstein inequality \cite{tropp2015introduction}. Similar results can also be found in \cite[Lemma 9]{zhong2018near}, \cite[Proposition 3]{liu2020unified}, and \cite[Eq. (5.12)]{ling2022near}.

\begin{lem}\label{lem:operator-norm}
    With probability at least $1 - \calO(1 / n)$, the following holds for any $m \in [n]$:
    \begin{align*}
        \|\mW\|_2 = \calO \left(\sqrt{nq \log n}\right), \quad  \|\mWm\|_2 = \calO \left(\sqrt{nq \log n}\right) , \quad  \|\mw_m\|_2 = \calO \left(\sqrt{nq \log n}\right).
    \end{align*}
\end{lem}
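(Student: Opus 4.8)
\textbf{Proof plan for \Cref{lem:operator-norm}.}
The plan is to apply the matrix Bernstein inequality to the three matrices in question, treating each as a sum of independent, mean-zero, self-adjoint (after a standard dilation) random matrices indexed by the edges of the underlying graph. First I would focus on $\|\mW\|_2$. Write $\mW = \sum_{1\le i<j\le n} \mW^{(ij)}$, where $\mW^{(ij)}$ is the symmetric matrix in $\R^{nd\times nd}$ whose only nonzero blocks are the $(i,j)$-th and $(j,i)$-th blocks, equal to $\mW_{ij}$ and $\mW_{ij}^\top$ respectively (and on the diagonal the $(i,i)$-blocks from \eqref{def:noise}, which I would handle in the same way). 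By \eqref{def:noise} each $\mW_{ij}$ satisfies $\|\mW_{ij}\|_2 \le 2$ deterministically, so $\|\mW^{(ij)}\|_2 \le 2 =: R$. For the variance proxy, using $\var(\mW_{ij}) = q(1-p^2q)\mId \preceq q\mId$ computed above, the $(i,i)$-block of $\E[\sum_{j} \mW^{(ij)} (\mW^{(ij)})^\top]$ is $\sum_{j\ne i}\E[\mW_{ij}\mW_{ij}^\top] \preceq nq\,\mId$, and off-diagonal blocks vanish in expectation by independence, so the matrix variance parameter is $v := \big\|\E[\sum \mW^{(ij)}(\mW^{(ij)})^\top]\big\|_2 = \calO(nq)$. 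Matrix Bernstein then gives, for $t = \calO(\sqrt{nq\log n} + \log n)$,
\[
\Pr\big(\|\mW\|_2 \ge t\big) \le 2nd \cdot \exp\!\left(\frac{-t^2/2}{v + Rt/3}\right) = \calO(1/n),
\]
and since the condition $p^2q = \Omega(\log n/n)$ forces $nq = \Omega(\log n)$, the $\sqrt{nq\log n}$ term dominates $\log n$, yielding $\|\mW\|_2 = \calO(\sqrt{nq\log n})$.

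For $\|\mW^{(m)}\|_2$, observe that $\mW^{(m)}$ is obtained from $\mW$ by zeroing out the $m$-th block-row and block-column, so it is again a sum of a subset of the same independent terms $\{\mW^{(ij)} : i,j \ne m\}$; the identical argument applies verbatim with the same $R$ and an only-smaller variance parameter. For $\|\mw_m\|_2$ — the operator norm of the $m$-th block-row $\mw_m^\top \in \R^{d\times nd}$ — I would instead bound $\|\mw_m^\top\mw_m\|_2 = \|\sum_{j\ne m} \mW_{mj}^\top \mW_{mj}\|_2$ (a sum of $n-1$ independent $d\times d$ PSD terms, each of norm $\le 4$, with $\|\E[\sum_j \mW_{mj}^\top\mW_{mj}]\|_2 = \calO(nq)$) by the matrix Bernstein / matrix Chernoff inequality, giving $\|\mw_m^\top\mw_m\|_2 = \calO(nq\log n)$ with probability $1-\calO(1/n^2)$, hence $\|\mw_m\|_2 = \calO(\sqrt{nq\log n})$. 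Finally I would take a union bound over all $m\in[n]$ (and, for the first estimate, note it is a single event), which costs only an extra factor of $n$ and is absorbed by choosing the constants in the Bernstein tail large enough so each individual failure probability is $\calO(1/n^2)$.

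The main obstacle is bookkeeping rather than conceptual: one must correctly identify the variance proxy for the block-structured sums (in particular checking that cross terms between distinct edges vanish in expectation, so that $v$ scales like $nq$ and not $n^2q^2$), and one must verify that the regime $p^2q = \Omega(\log n/n)$ indeed places us in the sub-Gaussian (rather than sub-exponential) branch of Bernstein's bound so that the clean $\sqrt{nq\log n}$ rate comes out. Given those checks, the three estimates follow from the same template, and the cited analogues \cite[Lemma 9]{zhong2018near}, \cite[Proposition 3]{liu2020unified}, \cite[Eq.~(5.12)]{ling2022near} confirm the form of the final bound.
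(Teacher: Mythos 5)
Your proof of the first two estimates follows the paper's argument essentially verbatim: the same edge-wise decomposition $\mW = \sum_{i<j}\mW^{(ij)}$, the same uniform block-norm bound $\|\mW^{(ij)}\|_2\le 2$, the same variance proxy $\|\E[\mW\mW^\top]\|_2 = \calO(nq)$, and matrix Bernstein followed by a union bound over $m$ for the $\mW^{(m)}$'s. (Minor remark: there are no diagonal blocks to worry about; the measurement graph has no self-loops, so the sum is over $i<j$ only, matching the paper.)

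Where you diverge is the third estimate $\|\mw_m\|_2$. You propose a separate concentration step, bounding $\|\mw_m^\top\mw_m\|_2 = \bigl\|\sum_{j}\mW_{mj}^\top\mW_{mj}\bigr\|_2$ via matrix Chernoff on a sum of PSD $d\times d$ blocks, then union-bounding over $m$. This works — in fact, since $\E[\mw_m^\top\mw_m] = \calO(nq)\mId$ and $nq = \Omega(\log n)$ in this regime, the Chernoff bound would even give the sharper $\|\mw_m\|_2 = \calO(\sqrt{nq})$ without the $\sqrt{\log n}$ factor. But it is more machinery than needed. The paper instead uses the deterministic observation that a block-row submatrix has operator norm at most that of the full matrix: for any unit vector $\vu$, $\|\mW\vu\|_2^2 = \sum_k \|\mw_k^\top\vu\|_2^2 \ge \|\mw_m^\top\vu\|_2^2$, hence $\|\mw_m\|_2 \le \|\mW\|_2 = \calO(\sqrt{nq\log n})$ for every $m$ simultaneously, with no additional concentration argument and no extra union bound. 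Both routes are valid; the paper's is shorter and already sufficient for the order of the claimed bound, whereas yours would yield a slightly stronger bound for this particular quantity at the cost of an extra (and somewhat delicate, since you must pick Chernoff rather than Bernstein for non-centered PSD summands) concentration step.
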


\begin{proof}
Note that $\mW = \sum_{i < j} \mW^{(ij)}$, where $\mW^{(ij)} \in \mathbb{R}^{nd \times nd}$ denotes the matrix with the $(i,j)$ and $(j,i)$-th block equal to $\mW_{ij}$ and $\mW_{ji}$ and others equal to 0. So $\{\mW^{(ij)}\}$ are i.i.d. centered and bounded random matrices. Besides, we have
\[
\left\| \mathbb{E}(\mW \mW^\top)\right\|_2 = \left\| \sum_{i < j} \mathbb{E}(\mW^{(ij)} (\mW^{(ij)} )^\top)\right\|_2 = 2(n-1) \| \var(\mW_{ij})\|_2 = \calO(nq).
\]
According to the matrix Bernstein inequality \cite{tropp2015introduction}, we have that $\|\mW\|_2 = \calO \left(\sqrt{nq \log n}\right)$ holds with probability at least  $1 - \calO(1 / n^2)$. The above argument also holds for each $\mWm$, then taking a union bound over the choice of $m \in [n]$ yields the second result. For $\mw_m$, we have
\[
\|\mW\|_2 = \max_{\|\vu\|_2 = 1} \| \mW \vu\|_2 \geq \max_{\|\vu\|_2 = 1} \| \mw_m \vu\|_2 =  \| \mw_m\|_2,
\]
which gives the last result.
\end{proof}

The following lemma follows from \cite{10.2307/2949580}; see also \cite[Theorem A.2]{ling2022near}. \cite[Lemma 11]{zhong2018near} is a special case where $d =1$. Before that, we need to introduce the distance up to a global orthogonal matrix:
$$
\Dist\left(\mX,\mY \right) = \|\mX - \mY \mR^\star \|_F, \text{ where } \mR^\star = \mathop{\arg\min}_{\mR \in O(d)} \|\mX\mR - \mY\|_F^2 = \mathcal{P}_{O(d)} (\mX^\top\mY).
$$
\begin{lem}[Davis-Kahan $\sin \Theta$ Theorem]\label{lem:davis-kahan}
	Suppose that $\mA, \mE\in \mathcal{C}^{n \times n}$ are Hermitian matrices and $\tilde{\mA} = \mA + \mE$. Let $ \delta = \lambda_d(\mA) - \lambda_{d + 1}(\mA) > 0$ be the gap between the $d$-th eigenvalue and $d+1$-th eigenvalue of $\mA$ for some $1 \leq d\leq n-1$. Furthermore,  let $\mU, \tilde{\mU}$ be the $d$-leading eigenvectors of $\mA$ and $\tilde{\mA}$, respectively, which are normalized such that $\|\mU\|_F = \|\tilde{\mU}\|_F = \sqrt{nd}$. Then, we have
	\begin{align}
		\Dist (\mU, \tilde{\mU}) \leq \frac{\sqrt{2}\|\mE \mU\|_F}{\delta - \|\mE\|_2}.
	\end{align}
\end{lem}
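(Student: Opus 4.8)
The plan is to carry out the textbook two-step argument for the Davis--Kahan $\sin\Theta$ theorem in its orthogonal-Procrustes form. I would first make a trivial normalization: writing $\mU=\sqrt{n}\,\mU_0$, $\tilde{\mU}=\sqrt{n}\,\tilde{\mU}_0$ with $\mU_0,\tilde{\mU}_0$ having orthonormal columns, both $\Dist(\mU,\tilde{\mU})$ and $\|\mE\mU\|_F$ scale by $\sqrt{n}$, so it suffices to treat the case in which $\mU,\tilde{\mU}$ themselves have orthonormal columns; I would also assume $\delta>\|\mE\|_2$, as otherwise the bound is vacuous. Pick orthonormal complements $\mU_\perp,\tilde{\mU}_\perp$, and let $\theta_1,\dots,\theta_d\in[0,\pi/2]$ be the principal angles between $\operatorname{range}(\mU)$ and $\operatorname{range}(\tilde{\mU})$; from $\mId=\mU^\H\mU=\mU^\H\tilde{\mU}\tilde{\mU}^\H\mU+\mU^\H\tilde{\mU}_\perp\tilde{\mU}_\perp^\H\mU$ one reads off that the singular values of $\mU^\H\tilde{\mU}$ are $\cos\theta_i$ and those of $\tilde{\mU}_\perp^\H\mU$ are $\sin\theta_i$. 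The two steps are: (i) a spectral-gap bound $\|\tilde{\mU}_\perp^\H\mU\|_F\le\|\mE\mU\|_F/(\delta-\|\mE\|_2)$, and (ii) a Procrustes bound $\Dist(\mU,\tilde{\mU})\le\sqrt{2}\,\|\tilde{\mU}_\perp^\H\mU\|_F$; multiplying them through finishes the proof.

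For step (i) I would use the standard Sylvester-equation trick. Write $\mA\mU=\mU\mLambda_1$ with $\mLambda_1=\operatorname{diag}(\lambda_1(\mA),\dots,\lambda_d(\mA))$, and $\tilde{\mU}_\perp^\H\tilde{\mA}=\tilde{\mLambda}_2\tilde{\mU}_\perp^\H$ with $\tilde{\mLambda}_2=\operatorname{diag}(\lambda_{d+1}(\tilde{\mA}),\dots,\lambda_n(\tilde{\mA}))$. Computing $\tilde{\mU}_\perp^\H\tilde{\mA}\mU$ in two ways --- once via $\tilde{\mU}_\perp^\H\tilde{\mA}=\tilde{\mLambda}_2\tilde{\mU}_\perp^\H$, once via $\tilde{\mA}=\mA+\mE$ and $\mA\mU=\mU\mLambda_1$ --- yields
\[
\tilde{\mLambda}_2\,(\tilde{\mU}_\perp^\H\mU)-(\tilde{\mU}_\perp^\H\mU)\,\mLambda_1=\tilde{\mU}_\perp^\H\mE\mU .
\]
Because $\tilde{\mLambda}_2$ and $\mLambda_1$ are diagonal, the $(k,l)$ entry of the left-hand side is $(\lambda_{d+k}(\tilde{\mA})-\lambda_l(\mA))(\tilde{\mU}_\perp^\H\mU)_{kl}$, and Weyl's inequality gives $\lambda_l(\mA)-\lambda_{d+k}(\tilde{\mA})\ge\lambda_d(\mA)-\lambda_{d+1}(\mA)-\|\mE\|_2=\delta-\|\mE\|_2$ for all $k\ge1,\ l\le d$. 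Summing squares over entries then gives $(\delta-\|\mE\|_2)\|\tilde{\mU}_\perp^\H\mU\|_F\le\|\tilde{\mU}_\perp^\H\mE\mU\|_F\le\|\mE\mU\|_F$, the last step using $\|\tilde{\mU}_\perp\|_2=1$.

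For step (ii) I would take an SVD $\tilde{\mU}^\H\mU=\mP\operatorname{diag}(\cos\theta_1,\dots,\cos\theta_d)\mQ^\H$ and choose the global rotation $\mR=\mP\mQ^\H$, which is exactly the orthogonal Procrustes optimum defining $\Dist$. Then, using $\mU^\H\mU=\tilde{\mU}^\H\tilde{\mU}=\mId$,
\[
\Dist(\mU,\tilde{\mU})^2=\|\mU-\tilde{\mU}\mR\|_F^2=2d-2\operatorname{Re}\trace(\mR^\H\tilde{\mU}^\H\mU)=2\sum_{i=1}^d(1-\cos\theta_i)\le 2\sum_{i=1}^d\sin^2\theta_i=2\|\tilde{\mU}_\perp^\H\mU\|_F^2,
\]
where the inequality is $1-\cos\theta_i\le1-\cos^2\theta_i$, valid since $\cos\theta_i\ge0$. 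Combining (i), (ii), and the normalization reduction yields $\Dist(\mU,\tilde{\mU})\le\sqrt{2}\,\|\mE\mU\|_F/(\delta-\|\mE\|_2)$.

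Since the result is classical there is no serious obstacle; the parts that need care are both in step (i). One must set the Sylvester equation up with $\tilde{\mU}_\perp^\H$ multiplying on the \emph{left} so that the residual is $\tilde{\mU}_\perp^\H\mE\mU$, controlled by the $\mU$-side quantity $\|\mE\mU\|_F$ that appears in the statement (rather than by $\|\mE\tilde{\mU}\|_F$); and one must extract the exact denominator $\delta-\|\mE\|_2$ from Weyl's inequality, which is what allows the use of the \emph{unperturbed} gap $\lambda_d(\mA)-\lambda_{d+1}(\mA)$ in place of a spectral gap of $\tilde{\mA}$ (indeed the argument only needs $\tilde{\mU}$ to span a top-$d$ invariant subspace of $\tilde{\mA}$, so boundary ties in the spectrum of $\tilde{\mA}$ are harmless). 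The Hermitian/complex setting affects only step (ii), and only cosmetically, since over the full group $O(d)$ the Procrustes optimum is $\mP\mQ^\H$ with no determinant correction.
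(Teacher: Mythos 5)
Your proof is correct and follows the standard route that the paper's cited references (Davis--Kahan and its modern descendants) use: the paper itself does not prove this lemma but only cites, and your two-step argument --- a Sylvester-equation bound giving $\|\tilde{\mU}_\perp^\H\mU\|_F\le\|\mE\mU\|_F/(\delta-\|\mE\|_2)$ via Weyl, followed by the Procrustes estimate $\Dist(\mU,\tilde{\mU})\le\sqrt{2}\,\|\tilde{\mU}_\perp^\H\mU\|_F$ from $1-\cos\theta\le\sin^2\theta$ --- is exactly that classical argument, with the normalization reduction and the boundary-tie remark handled correctly.
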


\subsection{Initialization Error in Euclidean Norm}
Based on \Cref{lem:operator-norm} and \Cref{lem:davis-kahan}, we have the following bound on $\Dist$. Note that the notations $\mPhi, \mPsi, \widetilde \mPhi, \widetilde \mPsi$ used in the following analysis are defined in $\SpectrIn$ (i.e., \Cref{alg:SpectrIn}).

\begin{lem}\label{coro:dist0m}
	 Let $\mPhi $ and $\mPhi^{(m)}$ be the $d$-leading eigenvectors of $\mY$ and $\mY^{(m)}$, respectively, which are normalized such that $\|\mPhi\|_F = \|\mPhi^{(m)}\|_F = \sqrt{nd}$. Then, we have 
	 \begin{align}
	 	\Dist (\mPhi, \mX^\star)  = \calO\left( \frac{\sqrt{\log n}}{p\sqrt{q}} \right) \quad \text{and} \quad	\Dist (\mPhi, \mPhi^{(m)}) = \calO(1)
	 \end{align}
  hold with probability at least $1 - \calO(1 /n)$.
\end{lem}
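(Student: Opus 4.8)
\textbf{Proof proposal for Lemma~\ref{coro:dist0m}.}

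The plan is to apply the Davis–Kahan $\sin\Theta$ theorem (\Cref{lem:davis-kahan}) twice, once with $\mA = pq\mX^\star\mX^{\star\top}$ and $\tilde\mA = \mY$, and once with $\tilde\mA = \mY^{(m)}$, using the operator-norm bounds from \Cref{lem:operator-norm} to control the perturbation. First I would identify the relevant spectral data of the unperturbed matrix: since each block of $\mX^\star$ lies in $\so(d)$, the matrix $\mX^\star\mX^{\star\top}\in\R^{nd\times nd}$ has rank $d$ with nonzero eigenvalue $n$ (repeated $d$ times), so $\mA = pq\,\mX^\star\mX^{\star\top}$ has $\lambda_1=\cdots=\lambda_d = npq$ and $\lambda_{d+1}=\cdots=0$; hence the eigengap is $\delta = npq$, and the $d$-leading eigenvectors of $\mA$ (normalized to Frobenius norm $\sqrt{nd}$) are exactly $\mX^\star$ (up to an $\operatorname{O}(d)$ rotation, which is precisely what $\Dist$ quotients out). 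For the perturbation $\mE = \mW$, \Cref{lem:operator-norm} gives $\|\mW\|_2 = \calO(\sqrt{nq\log n})$, and the condition $p^2q = \Omega(\log n/n)$ guarantees $\|\mW\|_2 = \calO(\sqrt{nq\log n}) = o(npq) = o(\delta)$, so that $\delta - \|\mW\|_2 = \Theta(npq)$ and the denominator in \Cref{lem:davis-kahan} is well-controlled.

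Next I would bound the numerator $\|\mW\mX^\star\|_F$. Writing $\mW\mX^\star$ block-wise, the $m$-th block is $\sum_{j} \mW_{mj}\mX_j^\star$, a sum of independent mean-zero $d\times d$ random matrices each of bounded operator norm; its variance proxy is $\calO(nq)$, so a matrix Bernstein / concentration argument (essentially the same computation underlying \Cref{lem:operator-norm}) gives $\|\sum_j \mW_{mj}\mX_j^\star\|_F = \calO(\sqrt{nq\log n})$ with probability $1-\calO(1/n^2)$, and a union bound over $m\in[n]$ yields $\|\mW\mX^\star\|_F = \calO(\sqrt{n}\cdot\sqrt{nq\log n}) = \calO(n\sqrt{q\log n})$. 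Plugging into \Cref{lem:davis-kahan},
\[
\Dist(\mPhi,\mX^\star) \le \frac{\sqrt{2}\,\|\mW\mX^\star\|_F}{\delta - \|\mW\|_2} = \frac{\calO(n\sqrt{q\log n})}{\Theta(npq)} = \calO\!\left(\frac{\sqrt{\log n}}{p\sqrt{q}}\right),
\]
which is the first claim. For the second claim, I would apply \Cref{lem:davis-kahan} to $\mA$ and $\tilde\mA = \mY^{(m)}$ with perturbation $\mE = \mW^{(m)}$; again $\|\mW^{(m)}\|_2 = \calO(\sqrt{nq\log n}) = o(\delta)$, and the numerator is $\|\mW^{(m)}\mX^\star\|_F \le \|\mW\mX^\star\|_F = \calO(n\sqrt{q\log n})$ (zeroing out a block row/column only decreases the norm), giving $\Dist(\mPhi^{(m)},\mX^\star) = \calO(\sqrt{\log n}/(p\sqrt q))$ as well; then the triangle inequality for $\Dist$ (after aligning the orthogonal factors) yields $\Dist(\mPhi,\mPhi^{(m)}) \le \Dist(\mPhi,\mX^\star) + \Dist(\mX^\star,\mPhi^{(m)}) = \calO(\sqrt{\log n}/(p\sqrt q))$. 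Here one should be slightly careful: this only gives $\calO(\sqrt{\log n}/(p\sqrt q))$, not $\calO(1)$; to sharpen it to $\calO(1)$ one uses that $\mW$ and $\mW^{(m)}$ differ only in the $m$-th block row and column, so $\mE = \mW - \mW^{(m)}$ has $\|\mE\|_F = \calO(\|\mw_m\|_2) = \calO(\sqrt{nq\log n})$ supported on $d$ rows, and a direct Davis–Kahan-type bound on the difference of invariant subspaces of $\mY$ and $\mY^{(m)}$ gives $\Dist(\mPhi,\mPhi^{(m)}) = \calO(\|\mW - \mW^{(m)}\|_2/\delta)\cdot(\text{numerator factor})$, which combined with $\|\mW^{(m)}\mX^\star\|_F$ being only $\calO(n\sqrt{q\log n})$ and the gap $\Theta(npq)$ must be tracked carefully to land at $\calO(1)$.

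The main obstacle I anticipate is precisely this last point: getting the second bound down to $\calO(1)$ rather than the crude $\calO(\sqrt{\log n}/(p\sqrt q))$ that two triangle-inequality invocations would give. This requires a perturbation bound for $\mPhi - \mPhi^{(m)}$ that exploits the \emph{sparse structure} of $\mW - \mW^{(m)}$ (only $O(d)$ rows and columns are nonzero) together with the fact that $\mPhi^{(m)}$ is already close to $\mX^\star$, so that $\|(\mW-\mW^{(m)})\mPhi^{(m)}\|_F$ can be bounded using the near-orthogonality of the blocks of $\mPhi^{(m)}$ rather than the worst-case operator norm. Everything else is routine: spectral analysis of the rank-$d$ signal matrix, matrix Bernstein for the operator and Frobenius norms, and bookkeeping the condition $p^2q = \Omega(\log n/n)$ to ensure the eigengap dominates.
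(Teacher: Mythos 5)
Your argument for the first bound $\Dist(\mPhi,\mX^\star) = \calO(\sqrt{\log n}/(p\sqrt q))$ is sound and essentially matches the paper's: both apply \Cref{lem:davis-kahan} with $\mA = pq\mX^\star\mX^{\star\top}$, $\mE = \mW$, eigengap $\delta = npq$, and land on $\|\mW\mX^\star\|_F = \calO(n\sqrt{q\log n})$; you get the numerator via a block-wise Bernstein bound plus a union bound, whereas the paper uses the cruder $\|\mW\mX^\star\|_F \le \|\mW\|_2\|\mX^\star\|_F$, but both give the same order.

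There is, however, a genuine gap in the second bound $\Dist(\mPhi,\mPhi^{(m)}) = \calO(1)$. You correctly observe that two triangle-inequality invocations only give $\calO(\sqrt{\log n}/(p\sqrt q))$, and that one must exploit the special structure of $\Delta\mW^{(m)} := \mW - \mW^{(m)}$, but the mechanism you point to --- sparsity of $\Delta\mW^{(m)}$ plus near-orthogonality of the blocks of $\mPhi^{(m)}$ --- is not, on its own, enough. The $m$-th block row of $\Delta\mW^{(m)}\mPhi^{(m)}$ is $\mw_m^\top\mPhi^{(m)} = \sum_{j\ne m}\mW_{mj}^\top\mPhi_j^{(m)}$, a sum of $\calO(nq)$ nonzero $\calO(1)$-sized terms; to show that it is $\calO(\sqrt{nq\log n})$ rather than the worst-case $\calO(nq)$ you need cancellation, and cancellation here comes from concentration, which in turn requires that $\mw_m$ and $\mPhi^{(m)}$ be \emph{statistically independent}. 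That independence is exactly what the leave-one-out construction supplies: $\mPhi^{(m)}$ depends only on $\mW^{(m)}$, which is $\mW$ with the $m$-th block row and column zeroed out, so it is independent of $\Delta\mW^{(m)}$. The paper's proof makes this the centerpiece: it applies \Cref{lem:davis-kahan} with $\mA = \mY^{(m)}$ (not the signal matrix), so that $\mU = \mPhi^{(m)}$ and $\mE = \Delta\mW^{(m)}$, precisely so that the numerator $\|\mE\mU\|_F = \|\Delta\mW^{(m)}\mPhi^{(m)}\|_F$ is a product of independent random quantities to which matrix Bernstein can be applied, yielding $\calO(\sqrt{nq\log n})$; dividing by the gap $\delta - \|\mE\|_2 \ge \frac12 npq$ then gives $\calO\bigl(\sqrt{\log n}/(p\sqrt{nq})\bigr) = \calO(1)$ under $p^2q = \Omega(\log n/n)$. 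Without naming this independence, your sketch does not actually deliver the $\calO(1)$ bound --- a Cauchy--Schwarz estimate $\|\Delta\mW^{(m)}\|_2\|\mPhi^{(m)}\|_F$ overshoots by a factor $\sqrt{nd}$, and the sparsity/near-orthogonality observation only controls the off-$m$ rows, not the dominant $m$-th row.
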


\begin{proof}
	Let us Choose $\mA = \mY^{(m)}$ and $\mE = \Delta \mW^{(m)} = \mW - \mW^{(m)}$ in Lemma \ref{lem:davis-kahan}, then $\tilde{\mA} = \mY$, $\mU = \mPhi^{(m)}$ and $\tilde{\mU} = \mPhi$. Since $\mPhi^{(m)}$ is independent of $\Delta \mW^{(m)}$, similar to \Cref{lem:operator-norm}, we apply the matrix Bernstein inequality \cite{tropp2015introduction} to obtain, with probability at least $1 - \calO(1 /n^2)$, that
	\begin{align*}
		\|\mE \mU\|_F = \|\Delta \mW^{(m)} \mPhi^{(m)}\|_F = \calO( \sqrt{nq \log n}).
	\end{align*}
	In addition, $\mY^{(m)} = pq\mX^\star \mX^{\star \top} + \mW^{(m)}$ implies that 
	\begin{align*}
		\delta = \lambda_d(\mY^{(m)}) - \lambda_{d + 1}(\mY^{(m)}) \geq \lambda_d(pq\mX^\star \mX^{\star \top}) - \|\mW^{(m)}\|_2 \geq npq - \calO( \sqrt{nq \log n}).
	\end{align*}
where the second inequality holds due to $\lambda_d(\mX^\star \mX^{\star \top}) = n$ and the last inequality is from $\lambda_d(\mX^\star \mX^{\star \top}) = n$ and Lemma \ref{lem:operator-norm}. Based on the condition that $p^2q = \Omega\left(\frac{\log n}{n}\right)$, as long as $p^2q \geq\frac{C \log n}{n}$ for some large enough constant $C$, we have 
\begin{align*}
	\delta - \|\mE\|_2 \geq npq - \calO( \sqrt{nq \log n}) - \|\mE\|_2 \geq npq - \calO( \sqrt{nq \log n}) \geq \frac{1}{2} npq,
\end{align*}
where the second inequality holds because of $\|\mE\|_2 \leq \|\mW\|_2  + \| \mW^{(m)}\|_2 = \calO( \sqrt{nq \log n})$. 
Hence, by applying Lemma \ref{lem:davis-kahan}, we get
\begin{align}
	\Dist (\mPhi, \mPhi^{(m)}) \leq \frac{\sqrt{2}\|\mE \mU\|_F}{\delta - \|\mE\|_2} \leq \frac{\calO(\sqrt{nq\log n}) }{npq} = \calO( 1 )
\end{align} 
where the last inequality is because of $p\sqrt{q} = \Omega( \sqrt{\log n / n})$. Similarly, by choosing $\mA = pq\mX^\star \mX^{\star \top}$ and $\mE = \mW$, we can show that 
\begin{align*}
	\Dist (\mPhi, \mX^\star) \leq \frac{\sqrt{2}\|\mW \mX^\star\|_F}{npq - \calO( \sqrt{nq \log n})} \leq \frac{\sqrt{2}\|\mW\|_2 \|\mX^\star\|_F}{\frac{1}{2}npq} = \calO\left( \frac{\sqrt{\log n}}{p\sqrt{q}} \right).
\end{align*}
Here, the last inequality holds because $\|\mW\|_2 = \calO(\sqrt{nq\log n})$ (see \Cref{lem:operator-norm}) and the fact that $\|\mX^\star\|_F = \sqrt{nd}$.
\end{proof}

Following the same analysis as in \Cref{coro:dist0m}, we are also able to show that $\Dist (\mPsi, \mX^\star)  = \calO\left( \frac{\sqrt{\log n}}{p\sqrt{q}} \right) $, 
where $\mPsi$ reverses the sign of the last column of $\mPhi$ so that the determinants of $\mPhi$ and $\mPsi$ differ by a sign. However, \Cref{coro:dist0m} only provides the upper bound on ``$\Dist$'', i.e., the distance up to an orthogonal matrix. The following lemma translates the result to that on ``$\dist$''.

\begin{lem}\label{lemma:dist-Dist}
    Suppose that $\|\widetilde \mPhi - \mPhi\|_F \leq \|\widetilde \mPsi - \mPsi\|_F$, where $\widetilde \mPhi = \calP_{\so(d)^n}(\mPhi)$ and $\widetilde \mPsi = \calP_{\so(d)^n}(\mPsi)$, then we have
    $$
\dist (\mPhi, \mX^\star) = \Dist (\mPhi, \mX^\star) =  \calO\left( \frac{\sqrt{\log n}}{p\sqrt{q}} \right).
$$
\end{lem}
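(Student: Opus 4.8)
The idea is to reduce the identity $\dist(\mPhi,\mX^\star)=\Dist(\mPhi,\mX^\star)$ to a statement about the sign of a determinant, and then to contradict the tie‑breaking hypothesis $\|\widetilde\mPhi-\mPhi\|_F\le\|\widetilde\mPsi-\mPsi\|_F$ in the ``wrong–sign'' case. Write $\dist(\mPhi,\mX^\star)=\min_{\mR\in\so(d)}\|\mPhi\mR-\mX^\star\|_F$ and $\Dist(\mPhi,\mX^\star)=\min_{\mR\in O(d)}\|\mPhi\mR-\mX^\star\|_F$ (distance up to a rotation, resp.\ up to an orthogonal transformation), and let $\mR^\star\in O(d)$ attain the latter minimum. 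Since $\so(d)\subseteq O(d)$ we always have $\dist(\mPhi,\mX^\star)\ge\Dist(\mPhi,\mX^\star)$, and the right‑hand equality of the lemma is exactly \Cref{coro:dist0m}; hence it suffices to show $\det\mR^\star=1$, for then $\mR^\star\in\so(d)$ gives $\dist(\mPhi,\mX^\star)\le\|\mPhi\mR^\star-\mX^\star\|_F=\Dist(\mPhi,\mX^\star)$. Since $\mPsi=\mPhi\mJ$ with $\mJ=\mathrm{diag}(1,\dots,1,-1)$ and every determinant‑$(-1)$ element of $O(d)$ has the form $\mJ\mS$ with $\mS\in\so(d)$, one has $\min_{\mR\in O(d),\ \det\mR=-1}\|\mPhi\mR-\mX^\star\|_F=\min_{\mS\in\so(d)}\|\mPsi\mS-\mX^\star\|_F=\dist(\mPsi,\mX^\star)$, so $\Dist(\mPhi,\mX^\star)=\min\{\dist(\mPhi,\mX^\star),\dist(\mPsi,\mX^\star)\}$; in particular, $\det\mR^\star=-1$ forces $\dist(\mPsi,\mX^\star)=\Dist(\mPhi,\mX^\star)$.

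Assume, for contradiction, that $\det\mR^\star=-1$. First I would record the elementary fact that $\min_{\mR\in\so(d)}\|\mO-\mR\|_F=2$ for every $\mO\in O(d)$ with $\det\mO=-1$ (all singular values of $\mO$ equal $1$, and the orientation constraint forces $\max_{\mR\in\so(d)}\langle\mO,\mR\rangle=d-2$). Next, by \Cref{coro:dist0m} together with the standing assumption $p^2q=\Omega(\log n/n)$ — taking the hidden constant large enough — we have $\Dist(\mPhi,\mX^\star)^2=\calO(\log n/(p^2q))\le\epsilon n$ for any prescribed small absolute constant $\epsilon$. Setting $e_i:=\|\mPhi_i\mR^\star-\mX_i^\star\|_F=\|\mPhi_i-\mX_i^\star\mR^{\star\top}\|_F$, this gives $\sum_{i=1}^n e_i^2=\Dist(\mPhi,\mX^\star)^2\le\epsilon n$, so at most $\epsilon n$ of the indices satisfy $e_i\ge1$; call the remaining ones \emph{good}. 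For a good $i$, the block $\mPhi_i$ lies within Frobenius distance $<1$ of the orthogonal matrix $\mX_i^\star\mR^{\star\top}$, whose determinant is $\det\mX_i^\star\cdot\det\mR^{\star\top}=-1$; hence, by $1$‑Lipschitzness of $\dist(\cdot,\so(d))$ and the fact just recorded, $\|\calP_{\so(d)}(\mPhi_i)-\mPhi_i\|_F=\dist(\mPhi_i,\so(d))\ge2-e_i$.

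Summing these block estimates and using $\sum_i e_i\le\sqrt{n\sum_i e_i^2}\le\sqrt{\epsilon}\,n$,
\[
\|\widetilde\mPhi-\mPhi\|_F^2=\sum_{i=1}^n\|\calP_{\so(d)}(\mPhi_i)-\mPhi_i\|_F^2\ \ge\ \sum_{\text{good }i}(2-e_i)^2\ \ge\ 4(1-\epsilon)n-4\sqrt{\epsilon}\,n\ \ge\ 3n
\]
once $\epsilon$ is small. On the other hand, $\{\mX^\star\mR:\mR\in\so(d)\}\subseteq\so(d)^n$, so $\|\widetilde\mPsi-\mPsi\|_F\le\dist(\mPsi,\mX^\star)=\Dist(\mPhi,\mX^\star)$, whence $\|\widetilde\mPsi-\mPsi\|_F^2\le\epsilon n\le n$. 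Therefore $\|\widetilde\mPhi-\mPhi\|_F>\|\widetilde\mPsi-\mPsi\|_F$, contradicting the hypothesis. Hence $\det\mR^\star=1$, which as explained above proves $\dist(\mPhi,\mX^\star)=\Dist(\mPhi,\mX^\star)$, and the stated order of magnitude is \Cref{coro:dist0m}.

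The main obstacle is that at the information‑theoretically optimal sample complexity $\Dist(\mPhi,\mX^\star)$ is only of order $\sqrt n$, not $o(\sqrt n)$, so one cannot conclude that \emph{every} block $\mPhi_i$ is well aligned — a constant fraction may be badly aligned. The good/bad split circumvents this: the number of bad blocks is a small, controllable fraction of $n$ (shrinking as the hidden constant in $p^2q=\Omega(\log n/n)$ grows), while every \emph{good} block whose determinant has the wrong sign already contributes an $\Omega(1)$ amount to $\|\widetilde\mPhi-\mPhi\|_F^2$, producing a deficit of order $n$ relative to $\|\widetilde\mPsi-\mPsi\|_F^2=\calO(\epsilon n)$ that cannot be absorbed. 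A secondary point to handle carefully is the passage from the $O(d)$‑distance ``$\Dist$'' (inherited from the Davis–Kahan bound in \Cref{coro:dist0m}) to the $\so(d)$‑distance ``$\dist$'', which is precisely the determinant bookkeeping carried out above.
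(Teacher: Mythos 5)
Your proof is correct, and it reaches the conclusion by a genuinely different route than the paper's. The paper does not split indices into good and bad at all. Instead, it establishes the \emph{unconditional} inequality
\[
\|\widetilde \mPhi - \mPhi\|_F + \|\widetilde \mPsi - \mPsi\|_F \;\geq\; 2\sqrt{n},
\]
which follows block-wise from the fact that $\|\widetilde\mPsi_i - \mPsi_i\|_F = \dist\bigl(\mPhi_i,\,O(d)\setminus\so(d)\bigr)$, so $\|\widetilde\mPhi_i - \mPhi_i\|_F + \|\widetilde\mPsi_i - \mPsi_i\|_F \geq \dist\bigl(\so(d),O(d)\setminus\so(d)\bigr) = 2$ for every $i$, and then $\|a\|_2 + \|b\|_2 \geq \|a+b\|_2 \geq 2\sqrt{n}$. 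Under the same contradiction hypothesis you use ($\Dist(\mPhi,\mX^\star) = \dist(\mPsi,\mX^\star)$), the paper observes $\|\widetilde\mPsi-\mPsi\|_F \leq \dist(\mPsi,\mX^\star) = \calO(\sqrt{\log n}/(p\sqrt q))$, and the tie-break hypothesis transfers the same bound to $\|\widetilde\mPhi-\mPhi\|_F$; so the sum is strictly below $2\sqrt n$ once the hidden constant in $p^2q = \Omega(\log n / n)$ is large, contradiction. What your argument buys is an explicit $\Omega(n)$ lower bound on $\|\widetilde\mPhi-\mPhi\|_F^2$ alone when $\det\mR^\star = -1$, which is stronger but requires the Markov-type good/bad split and the $1$-Lipschitz-to-$\so(d)$ step. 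What the paper's argument buys is that it sidesteps exactly the obstacle you flag at the end: it never needs to reason about the alignment of individual blocks $\mPhi_i$ to $\mX_i^\star\mR^{\star\top}$, because the lower bound on the \emph{sum} of projection residuals is geometric and holds for \emph{any} $\mPhi$ regardless of how well it is aligned. Both proofs ultimately rest on the same two facts --- $\Dist(\mPhi,\mX^\star) = \min\{\dist(\mPhi,\mX^\star),\dist(\mPsi,\mX^\star)\}$ and $\dist(\so(d),O(d)\setminus\so(d)) = 2$ --- so they are close in spirit; the paper's is just the shorter path through them.
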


\begin{proof}
Based on the structure of $O(d)$ and $\so(d)$, it is easy to see that 
$$
\Dist (\mPhi, \mX^\star) = \min \left\{ \dist (\mPhi, \mX^\star), \dist (\mPsi, \mX^\star) \right\}.
$$
Our remaining task is to prove $\Dist (\mPhi, \mX^\star) =  \dist (\mPhi, \mX^\star)$ based on the condition that $\|\widetilde \mPhi - \mPhi\|_F \leq \|\widetilde \mPsi - \mPsi\|_F$. If $\Dist (\mPhi, \mX^\star) =  \dist (\mPsi, \mX^\star)$, then we have
\[
\calO\left( \frac{\sqrt{\log n}}{p\sqrt{q}} \right) = \Dist (\mPhi, \mX^\star) =  \dist (\mPsi, \mX^\star) \geq \|\widetilde \mPsi - \mPsi\|_F \geq \|\widetilde \mPhi - \mPhi\|_F
\]
where the first inequality holds because $\widetilde \mPsi$ is the projection of $ \mPsi$ on $\so(d)^n$. It is easy to see that 
\[
\|\widetilde \mPhi - \mPhi\|_F + \|\widetilde \mPsi - \mPsi\|_F = \|\mPhi - \calP_{\so(d)^n}(\mPhi)\|_F +  \|\mPhi - \calP_{(O(d) \setminus \so(d))^n}(\mPhi)\|_F \geq 2\sqrt{n}.
\]
The equality holds because the mapping that reverses the sign of the last column of a matrix is a bijection between $\so(d)$ and $O(d)\setminus \so(d)$, and the inequality holds since the minimum distance between $\so(d)$ and $O(d) \setminus \so(d)$ is 2. Under the condition that $p^2q = \Omega\left(\frac{\log n}{n}\right)$, as long as $p^2q \geq\frac{C \log n}{n}$ for some large enough constant $C$, we could have $\|\widetilde \mPhi - \mPhi\|_F + \|\widetilde \mPsi - \mPsi\|_F = \calO\left( \frac{\sqrt{\log n}}{p\sqrt{q}} \right) < 2\sqrt{n}$, which contradict to the above inequality. Thus, we have
\[
\dist (\mPhi, \mX^\star) = \Dist (\mPhi, \mX^\star) =  \calO\left( \frac{\sqrt{\log n}}{p\sqrt{q}} \right).
\]
\end{proof}

\subsection{Initialization Error in Infinity Norm}

Next, based on the Davis-Kahan theorem, we can bound the distance between $\widetilde{\mX}^0$ and $\mX^\star$ in the infinity norm, which is stated in the following result.

\begin{lem}
    Let $\mPhi $ be the $d$-leading eigenvectors of $\mY$. Then, we have
\begin{align}
    \dist_\infty (\mPhi, \mX^{\star}) = \calO \left( \frac{\sqrt{\log n}}{p\sqrt{nq}} \right)
\end{align}
holds with probability at least $1 - \calO(1 /n)$.
\end{lem}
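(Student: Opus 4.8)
The plan is to bound the infinity-norm error $\dist_\infty(\mPhi,\mX^\star)$ by combining the eigenvector equation for $\mY$ with the leave-one-out construction $\mY^{(m)}$. First I would write the eigen-relation $\mY\mPhi = \mPhi\mLambda$, where $\mLambda\in\R^{d\times d}$ is the diagonal matrix of the top $d$ eigenvalues of $\mY$; using the decomposition $\mY = pq\mX^\star\mX^{\star\top}+\mW$, this rearranges to an expression of the form
\[
\mPhi_m = \frac{1}{pq}\bigl(\mX_m^\star (\mX^{\star\top}\mPhi) + (\mW\mPhi)_m\bigr)\mLambda^{-1},\qquad 1\le m\le n,
\]
after noting that $\lambda_{\min}(\mLambda) \ge npq - \calO(\sqrt{nq\log n}) \ge \tfrac12 npq$ by \Cref{lem:operator-norm} and Weyl's inequality, so $\|\mLambda^{-1}\|_2 = \calO(1/(npq))$. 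The block $\mX_m^\star(\mX^{\star\top}\mPhi)\mLambda^{-1}$ has Frobenius norm $\calO(1)$, so the entrywise error is controlled once we bound $\max_{1\le m\le n}\|(\mW\mPhi)_m\|_F = \max_m\|\mw_m^\top\mPhi\|_F$.

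The heart of the argument is the leave-one-out split already spelled out in the proof outline of \Cref{thm:init}:
\[
\|\mw_m^\top\mPhi\|_F \le \|\mw_m^\top\mPhi^{(m)}\|_F + \|\mw_m\|_2\,\|\mPhi-\mPhi^{(m)}\|_F .
\]
For the second summand, \Cref{lem:operator-norm} gives $\|\mw_m\|_2 = \calO(\sqrt{nq\log n})$, and \Cref{coro:dist0m} gives $\Dist(\mPhi,\mPhi^{(m)}) = \calO(1)$, which after fixing the orthogonal alignment (and using that $\mPhi^{(m)}$ can be rotated to match $\mPhi$ without changing $\|\mw_m^\top\mPhi^{(m)}\|_F$ up to $\calO(1)$ factors) yields a contribution of order $\sqrt{nq\log n}$. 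For the first summand, the key point is that $\mw_m^\top$ is statistically independent of $\mPhi^{(m)}$ (since $\mY^{(m)}$ zeroes out the $m$-th block row/column of $\mW$); conditioning on $\mPhi^{(m)}$, the quantity $\mw_m^\top\mPhi^{(m)} = \sum_{j\neq m}\mW_{mj}\mPhi_j^{(m)}$ is a sum of $n-1$ independent mean-zero $d\times d$ random matrices, each of operator norm $\calO(1)$ and with $\sum_j\|\mPhi_j^{(m)}\|_2^2 \le \|\mPhi^{(m)}\|_F^2 = nd$; the matrix Bernstein inequality then gives $\|\mw_m^\top\mPhi^{(m)}\|_F = \calO(\sqrt{nq\log n})$ with probability $1-\calO(1/n^2)$, and a union bound over $m\in[n]$ makes this uniform. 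Putting the two summands together, $\max_m\|(\mW\mPhi)_m\|_F = \calO(\sqrt{nq\log n})$, so each block obeys $\dist(\mPhi_m,\mX_m^\star) \le \|\mPhi_m - \mX_m^\star(\mX^{\star\top}\mPhi)\mLambda^{-1}\| \cdot (\text{alignment}) = \calO(\sqrt{nq\log n}/(npq)) = \calO(\sqrt{\log n}/(p\sqrt{nq}))$, which is the claimed bound (with the global rotation being the one from \Cref{lemma:dist-Dist} that turns $\Dist$ into $\dist$).

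The main obstacle I anticipate is the bookkeeping around the rotation/orthogonal alignments: $\mPhi$, $\mPhi^{(m)}$, and $\mX^\star$ are each defined only up to an orthogonal (indeed $\so(d)$, via \Cref{lemma:dist-Dist}) factor, and one must argue that a single global alignment works simultaneously for the eigen-identity, the leave-one-out comparison, and the final $\dist_\infty$ statement — the triangle-inequality split above is only legitimate after choosing $\mPhi^{(m)}$'s orientation to be close to $\mPhi$'s, and then one must check the residual misalignment is absorbed into the $\calO(1)$ term $\Dist(\mPhi,\mPhi^{(m)})$. A secondary technical point is verifying that $\mLambda$ is genuinely invertible with a well-controlled spectral gap (needing the eigenvalue separation from \Cref{lem:operator-norm} together with $\lambda_d(\mX^\star\mX^{\star\top}) = n$), and that the ``expectation'' block $\mX_m^\star(\mX^{\star\top}\mPhi)\mLambda^{-1}$ is itself close to an element of $\so(d)$ so that the block projection in $\SpectrIn$ does not blow up the error — this is exactly where the $\dist$-vs-$\Dist$ lemma and the condition $p^2q = \Omega(\log n/n)$ are used to keep everything consistent.
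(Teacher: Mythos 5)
Your proposal follows essentially the same route as the paper: the leave-one-out split $\|\mw_m^\top\mPhi\|_F \le \|\mw_m^\top\mPhi^{(m)}\|_F + \|\mw_m\|_2\,\Dist(\mPhi,\mPhi^{(m)})$ combined with \Cref{lem:operator-norm} and \Cref{coro:dist0m}, the eigen-identity $\mY\mPhi=\mPhi\mSigma$ with eigenvalue perturbation $\|\mSigma-npq\mId\|_2=\calO(\sqrt{nq\log n})$, and the $\dist$-vs-$\Dist$ reconciliation via \Cref{lemma:dist-Dist}. Two small things to tidy up: (i) your displayed identity $\mPhi_m=\frac{1}{pq}(\mX_m^\star\mX^{\star\top}\mPhi+(\mW\mPhi)_m)\mLambda^{-1}$ has a spurious $1/pq$ factor — it should read $\mPhi_m=(pq\mX_m^\star\mX^{\star\top}\mPhi+(\mW\mPhi)_m)\mSigma^{-1}$ (your subsequent order-of-magnitude calculation is consistent with the corrected version, so this is clearly a transcription slip); and (ii) the concluding comparison of $pq\mX_m^\star\mX^{\star\top}\mPhi\mSigma^{-1}$ to $\mX_m^\star\mPi^\star$ is only flagged as a ``secondary technical point'' — the paper makes this precise by first deriving $\|\mPhi_m\|_F\le 2\sqrt{d}$ from the eigen-relation, then writing $npq(\mPhi_m-\mX_m^\star\mPi^\star)=\mPhi_m(npq\mId-\mSigma)+(\mY\mPhi)_m-npq\mX_m^\star\mPi^\star$ and bounding the three resulting terms by $\|\mSigma-npq\mId\|_2\|\mPhi_m\|_F$, $\sqrt{n}pq\,\dist(\mPhi,\mX^\star)$, and $\|(\mW\mPhi)_m\|_F$ respectively, each of order $\calO(\sqrt{nq\log n})$. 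Spelling that rearrangement out closes the gap; otherwise your argument is the paper's.
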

\begin{proof}
Let $\mPi^{m} = \argmin_{\mPi\in \so(d)}\|\mPhi-\mPhi^{(m)} \mPi \|_F$. We can compute 
\begin{align}
 \begin{split}
     \|(\mW \mPhi)_m\|_F = \|\mw_m^\top \mPhi\|_F  &\leq \|\mw_m^\top \mPhi^{(m)} \mPi^m\|_F + \|\mw_m^\top(\mPhi -\mPhi^{(m)}\mPi^m)\|_F \\
     &\leq \|\mw_m^\top \mPhi^{(m)}\|_F + \|\mw_m\|_2 \|\mPhi -\mPhi^{(m)} \mPi^{m}  \|_F.
 \end{split}
 \end{align}
The fact that $\mPhi^{(m)}$ is independent from $\mw_m$ implies $ \|\mw_m^\top \mPhi^{(m)}\|_F = \calO (\sqrt{nq \log n})$, so we have
\begin{align}\label{ineq:wpm}
 \begin{split}
     \|(\mW \mPhi)_m\|_F = \calO (\sqrt{nq \log n}) \left(1 + \calO(1) \right) = \calO (\sqrt{nq \log n}).
 \end{split}
 \end{align}
Next, let $\mPi^\star = \argmin_{\mPi\in \so(d)}\|\mPhi- \mX^\star \mPi \|_F$, then we have
\begin{align*}
(\mY\mPhi)_m = pq \mX_m^\star \mX^{\star \top} \mPhi + (\mW \mPhi)_m = npq \mX_m^\star\mPi^\star + pq \mX_m^\star \mX^{\star \top} (\mPhi - \mX^\star \mPi^\star) + (\mW \mPhi)_m.
\end{align*}
Since $\mPhi$ is the $d$-leading eigenvector of $\mY$, we have $\mY\mPhi = \mPhi \mSigma$ with $\mSigma \in \mathbb{R}^{d \times d}$ consisting of the $d$-leading eigenvalues of $\mY$. Applying the standard eigenvalue perturbation theory, e.g., \cite[Theorem 4.11]{stewart1990matrix}, we have
$$\|\mSigma - npq \mId\|_2 = \calO( \|\mW\|_2) = \calO(\sqrt{nq \log n}).$$
Based on the assumption that $p^2q = \Omega\left(\frac{\log n}{n}\right)$, we can show that $\mSigma \geq \frac{3}{4}npq \mId$. Note that $\mY\mPhi = \mPhi \mSigma$ implies  $\mPhi_m =  (\mY\mPhi)_m \mSigma^{-1} $ for each $m \in [n]$, then we have
\begin{align*}
	\|\mPhi_m\|_F \leq \frac{4}{3npq}\|(\mY\mPhi)_m\|_F\leq \frac{4}{3npq}\|pq \mX_m^\star \mX^{\star \top} \mPhi_m\|_F + \frac{4}{3npq}\| (\mW \mPhi)_m\|_F \leq 2\sqrt{d},
\end{align*}
where the last inequality holds because $\frac{4}{3npq}\|pq \mX_m^\star \mX^{\star \top} \mPhi_m\|_F = \frac{4}{3n}\|\mX^{\star \top} \mPhi_m\|_F \leq \frac{4\sqrt{d}}{3}$ and \eqref{ineq:wpm}. Therefore, for each $m \in [n]$,
\begin{align*}
npq(\mPhi_m - \mX^\star_m \mPi^\star) &= \mPhi_m (npq\mId - \mSigma) + \mPhi_m \mSigma - npq\mX^\star_m \mPi^\star \\
& = \mPhi_m (npq\mId - \mSigma) + (\mY\mPhi)_m - npq\mX^\star_m \mPi^\star.
\end{align*}
This further implies
\begin{align*}
	npq \|\mPhi_m - \mX^\star_m \mPi^\star\|_F 
	\leq & \|\mSigma - npq \mId\|_2\|\mPhi_m\|_F + pq \|\mX_m^\star \mX^{\star \top} (\mPhi - \mX^\star \mPi^\star)\|_F + \|(\mW \mPhi)_m\|_F\\
	\leq & 2\sqrt{d}\|\mW\|_2 + \sqrt{n}pq \|\mPhi - \mX^\star \mPi^\star\|_F +  \calO \sqrt{nq (\log n})\\
	= & \calO (\sqrt{nq \log n}),
\end{align*}
where the last inequality holds because of \Cref{lem:operator-norm} and \Cref{coro:dist0m}. This completes the proof.
\end{proof}

Finally, since $\mX^0 = \calP_{\so(d)^n} (\mPhi)$, according to Lemma 2 in \cite{liu2020unified}, we have
$$
\dist(\mX^0, \mX^\star) \leq 2 \dist(\mPhi, \mX^\star)  \quad \text{and} \quad \dist_\infty(\mX^0, \mX^\star) \leq 2 \dist_\infty(\mPhi, \mX^\star),
$$
which complete the proof of \Cref{thm:init}.

\section{Full Proof of \Cref{thm:weak sharpness}} \label{appen:weak sharpness}

In this section, we provide the full proof of \Cref{lem:concentration of sets}, \Cref{prop:gx}, and \Cref{prop:hx}, which finishes the proof of \Cref{thm:weak sharpness}. To simplify the notation and the theoretical derivations, we assume without loss of generality that $\mX_i^\star = \mId$ for all $1\leq i\leq n$ as one can separately rotate the space that each variable $\mX_i$ lies in such that the corresponding ground-truth $\mX_i^\star$ is rotated to identity \cite[Lemma 4.1]{wang2013exact}. Consequently, we have $\mY_{ij} = \mId$ for $(i,j) \in \calA$.

\subsection{Proof of  \Cref{lem:concentration of sets}}
For each $1\leq i\leq n$, $|\calE_i|=\sum_{1\leq j\leq n}\bm{1}_{\calE_i}(j)$, where $\bm{1}_{\calE_i}(\cdot)$ denotes the indicator function w.r.t $\calE_i$. Based on our model, $\sum_{1\leq j\leq n}\bm{1}_{\calE_i}(j)$ follows the binomial distribution $B(n,q)$. According to the Bernstein inequality \cite{tropp2015introduction}, for any constant $\epsilon\in(0,1)$, we have
	\begin{align*}
		\Pr\left(\left|\sum_{1\leq j\leq n}\bm{1}_{\calE_i}(j)-nq\right|\geq\epsilon nq \right)&\leq 2\exp\left(-\frac{\frac{1}{2}\epsilon^2n^2q^2}{\sum_{1\leq j\leq n}E\{\bm{1}^2_{\calE_i}(j)\}+\epsilon nq/3}\right)\\
		&=2\exp\left(-\frac{\frac{1}{2}\epsilon^2n^2q^2}{nq+\epsilon nq/3}\right)\leq 2\exp\left(-\frac{3}{8}\epsilon^2nq\right).
	\end{align*}
	The last inequality holds because of $\epsilon<1$. Therefore,
	\e
	\Pr\left(\mathop{\cup}\limits_{1\leq i\leq n}\left\{\left |\calE_i|-nq\right|\geq\epsilon nq \right\}\right)\leq 2n\exp\left(-\frac{3}{8}\epsilon^2nq\right)\leq\frac{2}{n^2},
	\ee
	where the last inequality holds because we assume that $\epsilon\geq\frac{\sqrt{8\log n}}{\sqrt{n}pq}$. Similarly, we have
	\e
	\Pr\left(\mathop{\cup}\limits_{1\leq i\leq n}\left\{\left |\calA_i|-npq\right|\geq\epsilon npq \right\}\right)\leq 2n\exp\left(-\frac{3}{8}\epsilon^2npq\right)\leq\frac{2}{n^2},
	\ee
	\e
	\Pr\left(\mathop{\cup}\limits_{1\leq i,j\leq n}\left\{\left |\mathcal{E}_i \cap \calA_{j}|-npq^2\right|\geq\epsilon npq^2 \right\}\right)\leq 2n^2\exp\left(-\frac{3}{8}\epsilon^2npq^2\right)\leq\frac{2}{n},
	\ee
	and
	\e
	 \Pr\left(\mathop{\cup}\limits_{1\leq i,j\leq n}\left\{\left |\calA_{ij}|-np^2q^2\right|\geq\epsilon np^2q^2 \right\}\right)\leq 2n^2\exp\left(-\frac{3}{8}\epsilon^2np^2q^2\right)\leq\frac{2}{n}.
	\ee
	Hence, we complete the proof of Lemma \ref{lem:concentration of sets} once $n\geq 4$.

\subsection{Proof of \Cref{prop:gx}}\label{appen:gx}

We can first compute
\begingroup
\allowdisplaybreaks
\e
\begin{split}\label{ineq:bd1}
	\sum_{1\leq i,j\leq n}\|\mX_i-\mX_j\|_F&\leq \sum_{1\leq i,j\leq n}\frac{1}{|\calA_{ij}|}\sum_{k\in\calA_{ij}}(\|\mX_i-\mX_k\|_F+\|\mX_j-\mX_k\|_F)\\
	&\leq \frac{1}{(1-\epsilon)np^2q^2}\sum_{1\leq i,j\leq n}\sum_{k\in\calA_{ij}}(\|\mX_i-\mX_k\|_F+\|\mX_j-\mX_k\|_F).
\end{split}
\ee
\endgroup
Here, the first inequality comes from the triangle inequality, while the second one follows from \Cref{lem:concentration of sets}. Now,  invoking \Cref{lem:concentration of sets}, which tells $|\calA_k|\leq(1+\epsilon)npq$, gives
\begingroup
\allowdisplaybreaks
\begin{align*}
	\sum_{1\leq i,j\leq n}\sum_{k\in\calA_{ij}}\|\mX_i-\mX_k\|_F&=\sum_{1\leq i\leq n}\sum_{k\in\calA_{i}}\sum_{j\in\calA_{k}}\|\mX_i-\mX_k\|_F\\
	&\leq (1+\epsilon)npq\sum_{1\leq i\leq n}\sum_{k\in\calA_{i}}\|\mX_i-\mX_k\|_F\\
	&=(1+\epsilon)npq\sum_{(i,k)\in\calA}\|\mX_i-\mX_k\|_F.
\end{align*}
\endgroup
By symmetry, we conclude that
\begin{align}\label{ineq:bd2}
	\sum_{1\leq i,j\leq n}\sum_{k\in\calA_{ij}}(\|\mX_i-\mX_k\|_F+\|\mX_j-\mX_k\|_F)\leq 2(1+\epsilon)npq\sum_{(i,j)\in\calA}\|\mX_i-\mX_j\|_F.
\end{align}
Furthermore, we claim the following bound for any $\mX\in\so(d)^n$:
\e\label{eq:dist bound}
	\sum_{1\leq i,j\leq n}\|\mX_i-\mX_j\|_F\geq \frac{n}{2}\dist_1(\mX,\mX^\star).
\ee
Combining \eqref{ineq:bd1}, \eqref{ineq:bd2},  \eqref{eq:dist bound}, and 
 the fact $g(\mX) = \sum_{(i,j)\in\calA}\|\mX_i-\mX_j\|_F$ establishes \Cref{prop:gx}. 
 
Hence, it remains to show \eqref{eq:dist bound}.
First of all, according to the triangle inequality, we have
\begin{align}\label{ineq:component}
	\sum_{1\leq i,j\leq n}\|\mX_i-\mX_j\|_F\geq \sum_{1\leq i\leq n}\left\|n\mX_i-\sum_{1\leq j\leq n}\mX_j\right\|_F=n\sum_{1\leq i\leq n}\|\mX_i-\overline{\mX}\|_F,
\end{align}
where $\overline{\mX}=\frac{1}{n}\sum_{1\leq j\leq n}\mX_j$ can be taken as a diagonal matrix (since the Frobenius norm is invariant up to a global rotation) with its first $(d-1)$ diagonal entries being positive. Finally, applying the following lemma to \eqref{ineq:component} provides \eqref{eq:dist bound}.
\begin{lem}\label{lem:a}
For any $\mA\in\so(d)$ and $\mB=\text{Diag}(b_1,\dots,b_d)$ satisfying $b_1,\dots,b_{d-1}\in[0,1]$ and $b_d\in[-1,1]$, we have
\[
   \|\mA-\mB\|_F\geq\frac{1}{2}\|\mA-\mId\|_F.
\]
\end{lem}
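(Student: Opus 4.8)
\textbf{Proof proposal for Lemma~\ref{lem:a}.} The plan is to reduce the matrix inequality to a scalar trigonometric computation by exploiting the canonical form of elements of $\so(d)$. First I would expand both sides in terms of traces: since $\mA, \mB, \mId$ are all orthogonal (or at least have columns of controlled norm), $\|\mA-\mB\|_F^2 = \|\mA\|_F^2 + \|\mB\|_F^2 - 2\trace(\mA^\top\mB)$ and $\|\mA-\mId\|_F^2 = 2d - 2\trace(\mA)$. Because $\mB$ is diagonal, $\trace(\mA^\top\mB) = \sum_{k=1}^d b_k A_{kk}$, so the claimed inequality $\|\mA-\mB\|_F \geq \tfrac12\|\mA-\mId\|_F$, after squaring, becomes an inequality that is linear in the entries $b_k$ and in the diagonal entries $A_{kk}$ of $\mA$. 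The key structural fact I would use is that $|A_{kk}| \leq 1$ for any orthogonal $\mA$ (each column is a unit vector), and $\|\mB\|_F^2 = \sum b_k^2 \leq d$.

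Next I would handle the linearity in $b_k$. Fixing $\mA$, the function $b \mapsto \|\mA-\mB\|_F^2$ is a sum of one-dimensional convex quadratics $b_k^2 - 2 b_k A_{kk}$, so on the box $[0,1]^{d-1}\times[-1,1]$ its minimum is attained at a vertex, i.e.\ each $b_k$ is an endpoint of its interval. For $k \leq d-1$ the minimizing choice is $b_k = 1$ if $A_{kk} \geq \tfrac12$ and $b_k = 0$ otherwise (comparing the values $1 - 2A_{kk}$ and $0$); for $k = d$ it is $b_d = \sign(A_{dd})$, giving contribution $1 - 2|A_{dd}|$. So it suffices to prove the inequality when each $b_k \in \{0,1\}$ (or $\{-1,0,1\}$ for the last coordinate), and in that reduced form the problem decouples: I would show that for each index $k$, the ``local'' contribution to $4\|\mA-\mB\|_F^2 - \|\mA-\mId\|_F^2$ is nonnegative, which after the trace expansion amounts to checking, e.g., $4(1 - 2A_{kk}) \geq 2(1 - A_{kk})$ when $b_k=1$, and similar elementary scalar inequalities in the other cases, all of which hold because $A_{kk} \in [-1,1]$. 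Actually a cleaner route: after the reduction to $b_k \in \{0,1,-1\}$, note $\mB$ is itself (a projection of) a signature matrix, and one can either invoke $\|\mA - \mId\|_F \le \|\mA - \mB\|_F + \|\mB - \mId\|_F$ together with a bound on $\|\mB-\mId\|_F$ versus $\|\mA-\mB\|_F$, or just finish the scalar estimate directly.

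The main obstacle I anticipate is the interaction between the determinant constraint $\det(\mA) = 1$ (which is what distinguishes $\so(d)$ from $O(d)$) and the sign freedom in $b_d$: the point of allowing $b_d \in [-1,1]$ (rather than $[0,1]$) in the hypothesis is precisely to absorb the orientation issue, and I would need to verify that the vertex analysis above genuinely covers the worst case, i.e.\ that no element of $\so(d)$ with, say, several $A_{kk}$ near $-1$ violates the bound. I expect this is controlled by the constant $\tfrac12$ being generous enough: the extreme case is a single $2\times 2$ rotation block by angle near $\pi$ (the others being identity), where $\mA = \mId$ is far, $\|\mA-\mId\|_F$ is near $2\sqrt2$, and the best diagonal $\mB$ gives $\|\mA-\mB\|_F$ near $\sqrt2$ — so equality is essentially tight, confirming $\tfrac12$ is the right constant and that the vertex/scalar reduction is the correct strategy. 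A secondary, purely bookkeeping concern is making sure the reduction ``minimum over $\mB$ at a vertex'' is stated for the right quantity (we want a lower bound on $\|\mA-\mB\|_F$ over the \emph{given} $\mB$, so we lower-bound by the minimum over all admissible $\mB$, which is legitimate).
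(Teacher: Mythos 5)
The central move in your proposal — lower-bounding $\|\mA - \mB\|_F$ by its minimum over $\mB$ in the box and claiming that minimum is attained at a vertex — is incorrect and, more importantly, is a reduction in the wrong direction. The map $b_k \mapsto (A_{kk}-b_k)^2$ is \emph{convex}, so its constrained minimum sits at the clipping of $A_{kk}$ onto the interval, not at an endpoint; the vertex value you evaluate is \emph{larger} than the true minimum, so verifying the inequality at the vertices does not certify it at the actual minimizer. The paper does the dual thing, which is the one that works: rewrite the target inequality as $d + \langle\mA,\mId-4\mB\rangle + 2\|\mB\|_F^2 \geq 0$, fix $\mB$, and minimize this \emph{linear} functional of $\mA$ over the compact set $\so(d)$; the minimizer $\bar\mA = \calP_{\so(d)}(4\mB-\mId)$ is then genuinely a diagonal $\pm1$ matrix with $\det=1$, which gives a finite case analysis. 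Linearity in the variable you optimize over is what licenses the reduction, and the expression is linear in $\mA$, not in $\mB$. Your per-index claim also fails: writing $4\|\mA-\mB\|_F^2 - \|\mA-\mId\|_F^2 = 3\sum_{i\neq j}A_{ij}^2 + \sum_k\bigl[4(A_{kk}-b_k)^2 - (A_{kk}-1)^2\bigr]$, the bracket equals $-1$ at $b_k=0$, $A_{kk}=0$, so index-wise nonnegativity is false and one needs the off-diagonal surplus together with the orthogonality relation $\sum_j A_{kj}^2 = 1$.

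A deeper issue, which your tightness check comes close to but does not expose, is that the Lemma is \emph{false as literally stated}. Take $d=2$, $\mA = -\mId \in \so(2)$, $\mB = \operatorname{Diag}(0,-1)$; then $b_1 = 0 \in [0,1]$ and $b_2 = -1 \in [-1,1]$, yet $\|\mA - \mB\|_F = 1 < \sqrt2 = \tfrac12\|\mA-\mId\|_F$. The paper's final case ($\bar\mA_{kk}=\bar\mA_{dd}=-1$) quietly uses $|b_d|\leq b_k$, an ordering not contained in the stated hypotheses; it does hold in the one place the Lemma is applied, because there $\mB$ is the rotated SVD of $\overline{\mX}$ whose diagonal inherits the sorted singular values $b_1 \geq \cdots \geq b_{d-1} \geq |b_d|$, but any valid proof must invoke such an ordering. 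Neither of your two proposed finishes introduces it, and the triangle-inequality variant also collapses at the same counterexample, since there $\|\mB-\mId\|_F = \sqrt5 > 1 = \|\mA-\mB\|_F$, so the auxiliary bound $\|\mB-\mId\|_F \leq \|\mA-\mB\|_F$ you would need is false.
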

	
\begin{proof}	   	
    It is equivalent to show that $\|\mA-\mB\|_F^2\geq\frac{1}{4}\|\mA-\mId\|_F^2$. Since $\mA\in\so(d)$, we  simplify as
	\e\label{lem:sod}
	d+\langle\mA,\mId-4\mB\rangle+2\|\mB\|_F^2=\sum_{1\leq i\leq d}1+\mA_{ii}(1-4b_i)+2b_i^2\geq 0.
	\ee
	To prove \eqref{lem:sod} holds for all $\mA\in\so(d)$, we choose 
	$
	\bar{\mA}=\argmin_{\mA\in\so(d)}\langle\mA,\mId-4\mB\rangle=\mathcal{P}_{\so(d)}(4\mB-\mId).
	$
	It is easy to see that $\bar{\mA}$ is also a diagonal matrix. For any $1\leq i\leq d-1$, we have
	$$
	1+\mA_{ii}(1-4b_i)+2b_i^2=\left\{\begin{array}{ll}
		2(b_i-1)^2, & \mA_{ii}=1;\\ 2b_i^2+4b_i, &\mA_{ii}=-1.
	\end{array}\right.
	$$
	So it is always nonnegative since $b_i\geq 0$ for any $1\leq i\leq d-1$.
	For the last summation in \eqref{lem:sod}, on the one hand, if $\bar{\mA}_{dd}=1$, then $1+\mA_{dd}(1-4b_d)+2b_d^2=2(b_d-1)^2\geq 0$. On the other hand, if $\bar{\mA}_{dd}=-1$, then there exist another $1\leq k\leq d-1$ such that $\bar{\mA}_{kk}=-1$. So we have 
	$
	(1+\mA_{kk}(1-4b_k)+2b_k^2)+(1+\mA_{dd}(1-4b_d)+2b_d^2)=2b_k^2+2b_d^2+4(b_k+b_d)\geq 0.
	$
	The last inequality holds because $|b_d|\leq b_k$. We complete the proof.
\end{proof}

\subsection{Proof of \Cref{prop:hx}}
Based on \eqref{eq:bound hx} and our simplification that $\mX_i^\star = \mI$, $1\leq i\leq n$, we have
\begin{align}
h(\mX) - h(\mX^\star) \geq  \sum_{(i,j)\in\calA^c} \left\langle\frac{\mId-\mO_{ij}}{\|\mId-\mO_{ij}\|_F},\mX_i^\top\mX_j-\mId\right\rangle.
\end{align}
Let $\mA=\mathbb{E}\left\{\frac{\mId-\mO_{ij}}{\|\mId-\mO_{ij}\|_F}\right\}$, $\mZ_{ij}=\frac{\mId-\mO_{ij}}{\|\mId-\mO_{ij}\|_F}\cdot\bm{1}_{\calA^c}(ij)-(1-p)q\mA$, and $\mZ\in\mathbb{R}^{nd\times nd}$ collects each $\mZ_{ij}$ in its $(i,j)$-th block. We can compute
\begingroup
\allowdisplaybreaks
\begin{align} \label{eq:split outliers}
&\left|\sum_{(i,j)\in\calA^c}\left\langle\frac{\mId-\mO_{ij}}{\|\mId-\mO_{ij}\|_F},\mX_i^\top\mX_j-\mId\right\rangle\right|
	=\left|\sum_{1\leq i,j\leq n}\left\langle\mZ_{ij}+(1-p)q\mA,\mX_i^\top\mX_j-\mId\right\rangle\right|\nonumber\\
	& \leq\left|\sum_{1\leq i,j\leq n}\left\langle(1-p)q\mA,\mX_i^\top\mX_j-\mId\right\rangle\right| \\
    & \quad +\left|\sum_{1\leq i,j\leq n}\left\langle\mZ_{ij},(\mX_i-\mId)^\top(\mX_j-\mId)+(\mX_i-\mId)^\top+(\mX_j-\mId)\right\rangle\right|.\nonumber
\end{align}
\endgroup
To bound the first term in \eqref{eq:split outliers}, we can proceed as
\begingroup
\allowdisplaybreaks
\e
\begin{aligned}
	\left|\sum_{1\leq i,j\leq n}\left\langle(1-p)q\mA,\mX_i^\top\mX_j-\mId\right\rangle\right|&=\left|\left\langle(1-p)q\mA,\left(\sum_{i=1}^n\mX_i\right)^\top\left(\sum_{i=1}^n\mX_i\right)-n^2\mId\right\rangle\right|\\
	&=\left|\left\langle(1-p)q\mA,\left(\sum_{i=1}^n\mX_i+n\mId\right)^\top\left(\sum_{i=1}^n\mX_i-n\mId\right)\right\rangle\right|\\
	&\leq\left\|(1-p)q\mA\left(\sum_{i=1}^n\mX_i+n\mId\right)\right\|_F\left\|\sum_{i=1}^n\mX_i-n\mId\right\|_F\\
	&\leq\|(1-p)q\mA\|_F\left\|\sum_{i=1}^n\mX_i+n\mId\right\|_2\left\|\sum_{i=1}^n\mX_i-n\mId\right\|_F\\
	&\leq2(1-p)qn\|\mA\|_F\left\|\sum_{i=1}^n\mX_i-n\mId\right\|_F.
\end{aligned}
\ee
\endgroup
Here, the second equality is true since $\sum_{i=1}^n\mX_i$ can be taken as a diagonal matrix. According to \cite[Lemma A.1]{wang2013exact}, we know that $\|\mA\|_F=\left\|\mathbb{E}\left\{\frac{\mId-\mO_{ij}}{\|\mId-\mO_{ij}\|_F}\right\}\right\|_F\leq\frac{1}{\sqrt{2}}$ for all $d\geq 2$. On the other hand, we know that 
\e
\dist(\mX,\mX^\star)^2=\sum_{i=1}^n\|\mX_i-\mId\|_F^2=2\trace\left(n\mId-\sum_{i=1}^n\mX_i\right)\geq 2\left\|\sum_{i=1}^n\mX_i-n\mId\right\|_F,
\ee
where the last inequality holds because $n\mId-\sum_{i=1}^n\mX_i$ is a nonnegative diagonal matrix. Therefore, we obtain
\begingroup
\allowdisplaybreaks
\e
\begin{aligned}
\left|\sum_{1\leq i,j\leq n}\left\langle(1-p)q\mA,\mX_i^\top\mX_j-\mId\right\rangle\right| &\leq \frac{(1-p)qn}{\sqrt{2}}\dist(\mX,\mX^\star)^2\\
&\leq \frac{(1-p)qn}{\sqrt{2}}\max_i\|\mX_i-\mId\|_F\sum_{i=1}^n\|\mX_i-\mId\|_F.
\end{aligned}
\ee
\endgroup
To further bound the second term in \eqref{eq:split outliers}, we can proceed as 
\begingroup
\allowdisplaybreaks
\e \label{eq:term2}
\begin{aligned}
	&\left|\sum_{1\leq i,j\leq n}\left\langle\mZ_{ij},(\mX_i-\mId)^\top(\mX_j-\mId)+(\mX_i-\mId)^\top+(\mX_j-\mId)\right\rangle\right|\\
	\leq&(\mX-\mId^n)\mZ(\mX-\mId^n)^\top+2\left|\sum_{1\leq i\leq n}\left\langle\sum_{1\leq j\leq n}\mZ_{ij},\mX_i-\mId\right\rangle\right|\\
	\leq&\|\mZ\|_{op}\|\mX-\mId^n\|_F^2+2\sum_{1\leq i\leq n}\left\|\sum_{1\leq j\leq n}\mZ_{ij}\right\|_F\|\mX_i-\mId\|_F\\
	\leq&\left(2\sqrt{d}\|\mZ\|_{\operatorname{op}}+2\max_{1\leq i\leq n}\left\|\sum_{1\leq j\leq n}\mZ_{ij}\right\|_F\right)\sum_{1\leq i\leq n}\|\mX_i-\mId\|_F,
\end{aligned}
\ee
\endgroup
where $\mId^n\in\mathbb{R}^{nd\times d}$ collects $n$ identity matrix together and the last inequality holds because $\|\mX-\mId^n\|_F^2=\sum_{1\leq i\leq n}\|\mX_i-\mId\|_F^2\leq \sum_{1\leq i\leq n}(\|\mX_i\|_F+\|\mId\|_F)\|\mX_i-\mId\|_F=2\sqrt{d}\sum_{1\leq i\leq n}\|\mX_i-\mId\|_F$.	Using the randomness of $\mO_{ij}$, we claim that with probability at least $1-4d/n$, we have
\e\label{eq:matrix concentration bound}
\|\mZ\|_{\operatorname{op}}\leq \sqrt{8n(1-p)q\log n}, \quad \text{and}\quad \max_{1\leq i\leq n}\left\|\sum_{1\leq j\leq n}\mZ_{ij}\right\|_F\leq \sqrt{8n(1-p)q\log n}.
\ee	
Combining the above bounds gives 
\begingroup
\allowdisplaybreaks
\e
\begin{split}\label{ineq:fx1}
&h(\mX)  - h(\mX^\star)\\
&\geq -\left(2(\sqrt{d}+1)\sqrt{8n(1-p)q\log n}+\frac{(1-p)qn}{\sqrt{2}}\max_i\|\mX_i-\mId\|_F\right)\sum_{1\leq i\leq n}\|\mX_i-I\|_F \\
&\geq - \frac{npq}{16} \cdot \dist_1(\mX, \mX^\star),
\end{split}
\ee
\endgroup
where the last inequality holds because we assume $p^2q^2 = \Omega\left(\frac{\log n}{n}\right)$, $\max_i\|\mX_i-\mId\|_F = \dist_\infty(\mX, \mX^\star) = \calO(p)$, and $\sum_{1\leq i\leq n}\|\mX_i-I\|_F = \dist_1(\mX, \mX^\star)$.


Finally, it remains to show that the two inequalities in \eqref{eq:matrix concentration bound} holds with probability at least $1-4d/n$. It is quick to verify that  $\mathbb{E}(\mZ_{ij})=\bm{0},\|\mZ_{ij}\|_{op}\leq 1+(1-p)q\|\mA\|_F\leq 2$, and
\begingroup
\allowdisplaybreaks
\begin{align*}
\mathbb{E}(\mZ^2)&=\text{BlkDiag}\left(\sum_j\mathbb{E}(\mZ_{1j}\mZ_{1j}^\top),\dots,\sum_j\mathbb{E}(\mZ_{nj}\mZ_{nj}^\top)\right)\\
&=\sum_j\mathbb{E}(\mZ_{1j}\mZ_{1j}^\top)\otimes\bm{I}_n\\
&=n(1-p)q\mathbb{E}\left(\frac{(\mId-\mO_{ij})(\mId-\mO_{ij})^\top}{\|\mId-\mO_{ij}\|_F^2}-(1-p)^2q^2\mA\mA^\top\right)\otimes\bm{I}_n,
\end{align*}
\endgroup
where $\text{BlkDiag}(\cdot)$ means the block diagonal matrix, $\otimes$ means the Kronecker product and $\mId_n$ denotes the $n$-by-$n$ identity matrix. Thus, we have $\|\mathbb{E}(\mZ^2)\|_{op}\leq n(1-p)q$. According to the Matrix Bernstein inequality \cite{tropp2015introduction}, we have
\begingroup
\allowdisplaybreaks
\e
\begin{aligned}
\Pr\Big(\|\mZ\|_{\operatorname{op}} \leq \sqrt{8n(1-p)q\log n}\Big)&\geq 1-2nd\exp\left(\frac{-4n(1-p)q\log n}{n(1-p)q+2\sqrt{8n(1-p)q\log n}/3}\right)\\
&\geq 1-\frac{2d}{n}.
\end{aligned}
\ee
\endgroup
Here, the last inequality holds because we assume $2\sqrt{8n(1-p)q\log n}/3\leq n(1-p)q$, which is true as long as $p = \Omega(\log n/n)$.

For any fixed $1\leq i\leq n$, a similar argument based on Matrix Bernstein inequality \cite{tropp2015introduction} shows that
\begingroup
\allowdisplaybreaks
\e
\begin{aligned}
\Pr\left(\|\sum_j\mZ_{ij}\|_F\leq \sqrt{8n(1-p)q\log n}\right) &\geq 1-2d\exp\left(\frac{-4n(1-p)q\log n}{n(1-p)q+\sqrt{8n(1-p)q\log n}/3}\right)\\
&\geq 1-\frac{2d}{n^2},
\end{aligned}
\ee
\endgroup
which implies $\Pr\left(\max_i\|\sum_j\mZ_{ij}\|_F\leq \sqrt{8n(1-p)q\log n}\right)\geq 1-2d/n$.

\section{Full Proof of \Cref{thm:convergence}} \label{appen:convergence}

\subsection{Proof of \Cref{cor:bound_sub}}
Combining the convexity of $f$ and \Cref{thm:weak sharpness}, we have 
\begingroup
\allowdisplaybreaks
\begin{align}\label{eq:regularity bound 1}
	-\frac{npq}{8} \sum_{1\leq i\leq n}\|\mX_i-\mX^\star_i\|_F \geq f(\mX^\star)  - f(\mX) 
     \geq \left\langle \widetilde \nabla  f(\mX),  \mX^\star -\mX  \right\rangle, \quad \forall \ \widetilde \nabla  f(\mX)\in \partial f(\mX).
\end{align}
\endgroup
for all $\mX\in \so(d)^n$ satisfying $\dist_\infty(\mX, \mX^\star) = \calO(p)$.
For any $\widetilde \nabla_{\calR}^\perp f(\mX) \in \partial^{\perp}_{\calR} f(\mX)$,  we can further compute
\begingroup
\allowdisplaybreaks
\begin{align*}
	\left\langle \widetilde \nabla_{\calR}^\perp f(\mX),  \mX - \mX^\star \right\rangle  &= \sum_{1\leq i\leq n}\left\langle \widetilde \nabla_{\calR}^\perp f(\mX_i),  \calP_{\T^{\perp}_{\mX_i}}(\mX_i - \mX_i^\star) \right\rangle \\
  &\leq \sum_{1\leq i\leq n} \| \widetilde \nabla_{\calR}^\perp f(\mX_i) \|_F \cdot \|\calP_{\T^{\perp}_{\mX_i}}(\mX_i - \mX^\star_i)\|_F.
\end{align*}
\endgroup
On the one hand, notice that
\begingroup
\allowdisplaybreaks
\begin{align*}
    \calP_{\T^{\perp}_{\mX_i}}(\mX_i - \mX^\star_i) =& \mX_i - \mX^\star_i - \calP_{\T_{\mX_i}}(\mX_i - \mX^\star_i) \\
    =& \mX_i\left( \mX_i^\top(\mX_i - \mX^\star_i) + (\mX_i - \mX^\star_i)^\top \mX_i \right) / 2 \\
    = & \mX_i(\mX_i - \mX^\star_i)^\top(\mX_i - \mX^\star_i) / 2,
\end{align*}
\endgroup
which implies
\[
\|\calP_{\T^{\perp}_{\mX}}(\mX_i - \mX^\star_i)\|_F = \frac{1}{2} \|\mX_i(\mX_i - \mX^\star_i)^\top(\mX_i - \mX^\star_i)\|_F \leq \frac{1}{2}\|\mX_i - \mX^\star_i\|_F^2.
\]
On the other hand, according to \Cref{lem:concentration of sets}, with probability at least $1 - \calO(1/n)$, for any $i \in [n]$, 
\[
\| \widetilde \nabla_{\calR}^\perp f(\mX_i) \|_F \leq \| \widetilde \nabla f(\mX_i) \|_F = \left\| \sum_{j \in \mathcal{E}_i} \widetilde \nabla f_{i,j}(\mX_i)\right\|_F \leq \sum_{j \in \mathcal{E}_i}   \left\| \widetilde \nabla f_{i,j}(\mX_i)\right\|_F \leq |\mathcal{E}_i| \leq 2nq,
\]
where $\widetilde \nabla f_{i,j}(\mX_i) \in \partial f_{i,j}(\mX_i)$ satisfies $\| \widetilde \nabla f_{i,j}(\mX_i)\|_F\leq 1$. Hence, we have
\[
\left\langle \widetilde \nabla_{\calR}^\perp f(\mX),  \mX - \mX^\star \right\rangle \leq nq \sum_{1\leq i\leq n} \|\mX_i - \mX^\star_i\|^2_F \leq \frac{npq}{16} \sum_i \|\mX_i - \mX^\star_i\|_F
\]
for any $\mX$ such that $\dist_\infty(\mX, \mX^\star) \leq \frac{p}{16}$. Invoking the above bounds into \eqref{eq:regularity bound 1} yields the desired result
\[
	\left\langle \widetilde \nabla_{\calR} f(\mX),  \mX - \mX^\star  \right\rangle  = \left\langle \widetilde \nabla  f(\mX) - \widetilde \nabla_{\calR}^\perp f(\mX),  \mX - \mX^\star \right\rangle 
	\geq \frac{npq}{16} \sum_{1\leq i\leq n}\|\mX_i-\mX^\star_i\|_F.
\]

\subsection{Proof of Contraction}

Let us first present some preliminary results, which will be used in our later derivations. By noticing that $\widetilde \nabla f(\mX_i) = \sum_{j \in \mathcal{E}_i} \widetilde \nabla f_{i,j}(\mX_i)$ where $\widetilde \nabla f_{i,j}(\mX_i) \in \partial f_{i,j}(\mX_i)$, we define
\[
\widetilde \nabla g(\mX_i) = \sum_{j \in \mathcal{A}_i} \widetilde \nabla f_{i,j}(\mX_i), \quad \widetilde \nabla h(\mX_i) = \sum_{j \in \mathcal{E}_i \setminus \mathcal{A}_i} \widetilde \nabla f_{i,j}(\mX_i).
\]
Recall that $\widetilde \nabla_{\calR} g(\mX_i) = \calP_{\T_{\mX_i}}(\widetilde{\nabla}g(\mX_i))$ and $\widetilde \nabla_{\calR} h(\mX_i) = \calP_{\T_{\mX_i}}(\widetilde{\nabla}h(\mX_i))$. Similarly, we have $\widetilde \nabla f(\mX_i) =\widetilde \nabla g(\mX_i) + \widetilde \nabla h(\mX_i)$ and $\widetilde \nabla_{\calR} f(\mX_i) =\widetilde \nabla_{\calR} g(\mX_i) + \widetilde \nabla_{\calR} h(\mX_i)$. Furthermore,  the QR decomposition-based retraction satisfies the second-order boundedness property, i.e., there exists some $M \geq 1$ such that
\begin{equation}\label{eq:second-order boundedness}
\begin{aligned}
		\|\mX^{k+1}_i- \mX^\star_i\|_F &= \|\text{Retr}_{\mX^{k}_i}\left( - \mu_k \widetilde \nabla_{\calR} f(\mX^k_i)\right) - \mX^\star_i\|_F \\
  &\leq \|\mX^{k}_i- \mu_k \widetilde \nabla_{\calR} f(\mX^k_i) - \mX^\star_i\|_F + M \cdot \mu_k^2 \|\widetilde \nabla_{\calR} f(\mX^k_i)\|_F^2.
\end{aligned}
\end{equation}
Recall that $\widetilde \nabla_{\calR} f(\mX^k_i) = \widetilde \nabla_{\calR} g(\mX^k_i) + \widetilde \nabla_{\calR} h(\mX^k_i)$, we have
\begin{equation}\label{eq:bound Rie subgrad}
\begin{aligned}
 \|\widetilde \nabla_{\calR} f(\mX^k_i)\|_F  &\leq \|\widetilde \nabla_{\calR} g(\mX^k_i)\|_F + \|\widetilde \nabla_{\calR} h(\mX^k_i)\|_F \\
&\leq 5 \sqrt{2\delta_0}nq + (1+\epsilon)npq \leq (1+2\epsilon)npq,   
 \end{aligned}
\end{equation}
where the second inequality comes from
\[
\|\widetilde \nabla_{\calR} g(\mX^k_i)\|_F = \left\| \sum_{j \in \mathcal{A}_i} \widetilde \nabla_{\calR} f_{i,j}(\mX_i) \right\|_F \leq \sum_{j \in \mathcal{A}_i} \left\|  \widetilde \nabla_{\calR} f_{i,j}(\mX_i) \right\|_F  \leq |\mathcal{A}_i| \leq (1+\epsilon)npq,
\]
and \Cref{lem:con_of_X_OX} and the last inequality is due to the choice $\delta_0 \leq \epsilon^2p^2/50$ (i.e., $\delta_0 = \calO(p^2)$). Thus, by choosing $\mu_k = \calO(\frac{ \delta_k}{n})$, and $\delta_0 = \calO(p^2)$, the second-order term $ M \cdot \mu_k^2 \|\widetilde \nabla_{\calR} f(\mX^k_i)\|_F^2 = \calO(p^6 q^2)$, which is a very high-order error. In the following analysis, we will ignore this term to simplify our derivations. 

Using the above preliminaries and \Cref{cor:bound_sub}, we are ready to establish two key lemmas, which show that if $\mX^{k}\in \mathcal{N}_F^k \cap \mathcal{N}_\infty^k$, then $\mX^{k+1}\in\mathcal{N}_F^{k+1}$ (\Cref{lem:contraction_F}) and $\mX^{k+1}\in\mathcal{N}_\infty^{k+1}$ (\Cref{lem:contraction_infty}), respectively. This completes the proof of \Cref{thm:convergence}. 

\begin{lem}\label{lem:contraction_F}
	With high probability, suppose that $\mX^{k}\in \mathcal{N}_F^k \cap \mathcal{N}_\infty^k$, $\mu_k = \calO(\frac{\delta_k}{n})$, and
	\begin{align}
		\delta_0 = \calO(p^2), \quad \text{and} \quad \xi_0 = \Theta(\sqrt{npq}\delta_0),
	\end{align}
	then $\mX^{k+1}\in\mathcal{N}_F^{k+1}$.
\end{lem}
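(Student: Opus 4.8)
\textbf{Proof plan for \Cref{lem:contraction_F}.} The goal is to show that the Riemannian subgradient step contracts the Euclidean distance $\dist(\mX^k, \mX^\star)$ by the factor $\gamma = 1 - pq/16$. The plan is to work with the global rotation $\mR^\star$ achieving $\dist(\mX^k, \mX^\star) = \|\mX^k - \mX^\star \mR^\star\|_F$ and bound $\|\mX^{k+1} - \mX^\star \mR^\star\|_F^2$ block-by-block, since the retraction and the Riemannian subgradient both decouple across the $n$ blocks. First I would invoke the second-order boundedness property \eqref{eq:second-order boundedness} to reduce the estimate to the tangent-step quantity $\|\mX_i^k - \mu_k \widetilde\nabla_{\calR} f(\mX_i^k) - (\mX^\star \mR^\star)_i\|_F$, absorbing the $M \mu_k^2 \|\widetilde\nabla_{\calR} f(\mX_i^k)\|_F^2$ term into a negligible high-order error using the bound \eqref{eq:bound Rie subgrad} together with the choices $\mu_k = \calO(\delta_k/n)$ and $\delta_0 = \calO(p^2)$ (so this contributes only $\calO(p^6 q^2)$, dominated by the linear term).

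Next I would expand the square:
\[
\sum_i \|\mX_i^k - \mu_k \widetilde\nabla_{\calR} f(\mX_i^k) - (\mX^\star\mR^\star)_i\|_F^2 = \dist(\mX^k,\mX^\star)^2 - 2\mu_k \langle \widetilde\nabla_{\calR} f(\mX^k), \mX^k - \mX^\star\mR^\star\rangle + \mu_k^2 \|\widetilde\nabla_{\calR} f(\mX^k)\|_F^2.
\]
For the cross term, \Cref{cor:bound_sub} gives $\langle \widetilde\nabla_{\calR} f(\mX^k), \mX^k - \mX^\star\rangle \geq \tfrac{npq}{16}\dist_1(\mX^k,\mX^\star)$, which applies because $\mX^k \in \mathcal{N}_\infty^k$ ensures $\dist_\infty(\mX^k,\mX^\star) = \calO(p)$ as required; here I would use $\dist_1 \geq \dist$. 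For the quadratic term I would use \eqref{eq:bound Rie subgrad}, i.e. each block is bounded by $(1+2\epsilon)npq$, so $\|\widetilde\nabla_{\calR} f(\mX^k)\|_F^2 \leq \calO(n^3 p^2 q^2)$. Plugging $\mu_k = \Theta(\delta_k/n)$ and the relation $\xi_k = \Theta(\sqrt{npq}\,\delta_k)$ (so that $\dist(\mX^k,\mX^\star) \leq \xi_k$ is comparable to $\sqrt{npq}\,\delta_k$), the linear decrease $-2\mu_k \cdot \tfrac{npq}{16}\dist(\mX^k,\mX^\star)$ should dominate the quadratic increase $\mu_k^2 \cdot \calO(n^3 p^2 q^2)$; one checks $\mu_k \cdot npq \cdot \dist \gtrsim \mu_k^2 n^3 p^2 q^2$ reduces to $\dist \gtrsim \mu_k n^2 p q \sim \delta_k n p q$, consistent with $\xi_k \sim \sqrt{npq}\,\delta_k$ once $npq \gtrsim 1$. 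This yields $\dist(\mX^{k+1},\mX^\star)^2 \leq (1 - \tfrac{pq}{8} + O((pq)^2))\dist(\mX^k,\mX^\star)^2 \leq \gamma^2 \xi_k^2 = \xi_{k+1}^2$.

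The main obstacle I anticipate is the bookkeeping that balances the linear and quadratic terms: one must carefully track the constants in $\xi_0 = \Theta(\sqrt{npq}\,\delta_0)$ and $\mu_k = \Theta(\delta_k/n)$ so that the quadratic term $\mu_k^2\|\widetilde\nabla_{\calR} f(\mX^k)\|_F^2$ is strictly smaller than the gain from the weak-sharpness cross term, and simultaneously confirm the high-order retraction error is truly negligible — this requires that $\dist(\mX^k,\mX^\star)$ not be too small relative to $\delta_k n p q$, which is exactly where the relation between $\xi_k$ and $\delta_k$ is used. A secondary subtlety is ensuring the cross-term inequality from \Cref{cor:bound_sub} can be transferred from the distance-to-$\mX^\star$ pairing to the pairing with $\mX^\star\mR^\star$; this follows from the definition of $\mR^\star$ and the fact that $\widetilde\nabla_{\calR} f$ is computed blockwise, so replacing $\mX^\star$ by $\mX^\star\mR^\star$ only amounts to a global change of frame under which $f$ is invariant.
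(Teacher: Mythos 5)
Your overall skeleton matches the paper (expand the square after the retraction, bound the cross term with \Cref{cor:bound_sub}, bound the quadratic term with \eqref{eq:bound Rie subgrad}, and absorb the second-order retraction error), but there is one crucial step you replace by something too weak, and your own sanity check actually reveals the problem.

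You pass from the $\ell^1$ quantity in the weak-sharpness bound to the Euclidean quantity via $\dist_1 \geq \dist$. That leaves the cross-term decrease \emph{linear} in $\dist(\mX^k,\mX^\star)$, namely $2\mu_k\cdot\frac{npq}{16}\dist$, while the two terms it must beat — the current squared distance and the quadratic penalty $\mu_k^2\|\widetilde\nabla_\calR f(\mX^k)\|_F^2$ — are order $\xi_k^2$. When you ``check $\dist \gtrsim \mu_k n^2pq \sim \delta_k npq$,'' note that the hypothesis $\mX^k\in\calN_F^k$ only gives the \emph{upper} bound $\dist \leq \xi_k = \Theta(\sqrt{npq}\,\delta_k)$, which is smaller, not larger, than $\delta_k npq$ precisely in the regime $npq\gtrsim 1$ that you invoke. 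So the claimed consistency is reversed, and there is no lower bound on $\dist$ available to save the argument; $\mX^k$ could be arbitrarily close to $\mX^\star$.

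The fix — and what the paper actually does — is to exploit $\mX^k\in\calN_\infty^k$ (i.e., $\|\mX_i^k-\mX_i^\star\|_F\leq\delta_k$ for all $i$) to sharpen the conversion to
\[
\dist_1(\mX^k,\mX^\star) \;=\; \sum_i \|\mX_i^k-\mX_i^\star\|_F \;\geq\; \frac{1}{\delta_k}\sum_i \|\mX_i^k-\mX_i^\star\|_F^2 \;=\; \frac{1}{\delta_k}\dist(\mX^k,\mX^\star)^2.
\]
This makes the cross term quadratic in $\dist$: $\langle\widetilde\nabla_\calR f(\mX^k),\mX^k-\mX^\star\rangle \geq \frac{npq}{16\delta_k}\dist^2$, so with $\mu_k=\Theta(\delta_k/n)$ the expansion becomes a genuine multiplicative contraction $\bigl(1-\frac{pq}{8}\bigr)\dist^2 + \mu_k^2\|\widetilde\nabla_\calR f(\mX^k)\|_F^2$, where the additive term is $\calO(pq)\,\xi_k^2$ by the choice $\xi_0=\Theta(\sqrt{npq}\,\delta_0)$. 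Then one simply upper-bounds $\dist^2\leq\xi_k^2$ and collects terms to get $\dist(\mX^{k+1},\mX^\star)^2\leq(1-\frac{pq}{16})\xi_k^2\leq\xi_{k+1}^2$; no lower bound on $\dist$ is ever needed. The rest of your plan — the handling of the second-order retraction error as an $\calO(p^6q^2)$ negligible term, the bound $\|\widetilde\nabla_\calR f(\mX^k_i)\|_F\leq(1+2\epsilon)npq$ per block, and the remark about the global rotation $\mR^\star$ being harmless — is correct and matches the paper.
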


\begin{proof}
By ignoring the high-order error term in \eqref{eq:second-order boundedness},  in order to bound $\|\mX^{k+1} - \mX^\star\|_F^2$  we can first compute 
\begingroup
\allowdisplaybreaks
	\begin{align*}
		\|\mX^{k+1} - \mX^\star\|_F^2 &= \sum_{1\leq i\leq n} \|\mX_i^{k+1} - \mX_i^\star\|_F^2 \leq \sum_{1\leq i\leq n} \|\mX^{k}_i- \mu_k \widetilde \nabla_{\calR} f(\mX^k_i) - \mX^\star_i\|_F^2  \\
		&=  \|\mX^{k} - \mX^\star\|_F^2 - 2\mu_k \left\langle \widetilde \nabla_{\calR} f(\mX^k),  \mX^{k} - \mX^\star \right\rangle + \mu_k^2 \|\widetilde \nabla_{\calR} f(\mX^k)\|_F^2.
	\end{align*}
 \endgroup
Then, according to \Cref{cor:bound_sub}, we have
\begingroup
\allowdisplaybreaks
\begin{align*}
\left\langle \widetilde \nabla_{\calR} f(\mX^k),  \mX^{k} - \mX^\star \right\rangle \geq \frac{npq}{16} \sum_{1\leq i\leq n}\|\mX^k_i-\mX^\star_i\|_F &\geq \frac{npq}{16\delta_k} \sum_{1\leq i\leq n}\|\mX^k_i-\mX^\star_i\|_F^2\\
&= \frac{npq}{16\delta_k} \|\mX^k-\mX^\star\|_F^2,
\end{align*}
\endgroup
where the second inequality holds because $\|\mX^k_i-\mX^\star_i\|_F \leq \delta_k$ (i.e., $\mX^k\in \calN_\infty^k$). Combining the above two inequalities gives
\begingroup
\allowdisplaybreaks
	\begin{align*}
		\|\mX^{k+1} - \mX^\star\|_F^2 \leq & \left(1 - \mu_k \cdot \frac{npq}{8 \delta_k}\right)\|\mX^{k} - \mX^\star\|_F^2  +  \mu_k^2 \|\widetilde \nabla_{\calR} f(\mX^k)\|_F^2\\
		\leq & \left(1 - \frac{pq}{8}\right)\|\mX^{k} - \mX^\star\|_F^2  +  (1+2\epsilon)^2 n^3 p^2q^2  \mu_k^2,
	\end{align*}
 \endgroup
where the last inequality is due to \eqref{eq:bound Rie subgrad}. Since $\mu_k = \calO(\frac{\delta_k}{n})$ and $\xi_0 = \Theta(\sqrt{npq}\delta_0)$ (i.e., $\xi_k = \Theta(\sqrt{npq}\delta_k$), we have $n^3 p^2q^2  \mu_k^2 = \calO(pq \xi_k^2)$, which implies
\[
\|\mX^{k+1} - \mX^\star\|_F^2 \leq \left(1 - \frac{pq}{8}\right)\xi_k^2 +  \frac{pq}{16} \xi_k^2 = \left(1 - \frac{pq}{16}\right)\xi_k^2  = \xi_{k+1}^2.
\]
This completes the proof.
\end{proof}

\begin{lem}\label{lem:contraction_infty}
	With high probability, suppose that $\mX^{k}\in \mathcal{N}_F^k \cap \mathcal{N}_\infty^k$, $\mu_k = \calO(\frac{ \delta_k}{n})$, and
	\begin{align}
		\delta_0 = \calO(p^2)\quad \text{and} \quad \xi_0 = \calO(\sqrt{npq}\delta_0),
	\end{align}
	then $\mX^{k+1}\in\mathcal{N}_\infty^{k+1}$.
\end{lem}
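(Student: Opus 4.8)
\textbf{Proof outline of \Cref{lem:contraction_infty}.}
I would argue block by block. As in the proof of \Cref{thm:weak sharpness}, assume without loss of generality that $\mX_i^\star = \mId$ for all $i$, so the goal is $\|\mX_i^{k+1} - \mId\|_F \le \delta_{k+1} = \gamma\delta_k$ for every $i$. Fix $i$, write $E_i := \mX_i^k - \mId$ and $a_i := \|E_i\|_F$; since $\mX^k \in \mathcal{N}_\infty^k$ we have $a_i \le \delta_k$. Exactly as in \Cref{lem:contraction_F}, by \eqref{eq:second-order boundedness} and the a priori bound $\|\widetilde\nabla_{\calR} f(\mX_i^k)\|_F \le (1+2\epsilon)npq$ from \eqref{eq:bound Rie subgrad} the retraction correction is negligible, so it suffices to bound $\|\mX_i^k - \mId - \mu_k\widetilde\nabla_{\calR} f(\mX_i^k)\|_F$ with $\mu_k = \delta_k/n$. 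I would split into two regimes according to the size of $a_i$.

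In the easy regime $a_i \le \delta_k/2$ no contraction is needed: by the triangle inequality and the a priori subgradient bound, $\|\mX_i^{k+1} - \mId\|_F \le a_i + \mu_k\|\widetilde\nabla_{\calR} f(\mX_i^k)\|_F + (\text{retraction correction}) \le \tfrac12\delta_k + (1+2\epsilon)pq\,\delta_k + \cdots \le (1-pq/16)\delta_k$ once $pq$ is small enough, where the choice $\mu_k = \delta_k/n$ is what keeps the drift $\mu_k\|\widetilde\nabla_{\calR} f(\mX_i^k)\|_F = \calO(pq\,\delta_k)$.

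The substantive regime is $a_i > \delta_k/2$, where the plan is to establish a \emph{per-block} regularity inequality
\[
   \big\langle \widetilde\nabla_{\calR} f(\mX_i^k),\, \mX_i^k - \mId \big\rangle \;\ge\; \tfrac14\, npq\, a_i ,
\]
and then mimic \Cref{lem:contraction_F}: expanding $\|\mX_i^{k+1} - \mId\|_F^2 \le a_i^2 - 2\mu_k\langle \widetilde\nabla_{\calR} f(\mX_i^k), E_i\rangle + \mu_k^2\|\widetilde\nabla_{\calR} f(\mX_i^k)\|_F^2 + \cdots$, plugging in the displayed bound, $\mu_k = \delta_k/n$, $a_i \in (\delta_k/2, \delta_k]$, and $\|\widetilde\nabla_{\calR} f(\mX_i^k)\|_F \le (1+2\epsilon)npq$ (so the quadratic term is $\calO(p^2q^2)\delta_k^2$) yields $\|\mX_i^{k+1} - \mId\|_F^2 \le a_i^2 - \tfrac12 pq\,\delta_k a_i + \calO(p^2q^2)\delta_k^2 \le (1-pq/16)^2\delta_k^2 = \delta_{k+1}^2$ for $pq$ small. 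To prove the per-block inequality I would decompose $\widetilde\nabla_{\calR} f(\mX_i^k) = \widetilde\nabla_{\calR} g(\mX_i^k) + \widetilde\nabla_{\calR} h(\mX_i^k)$ and $\widetilde\nabla_{\calR}(\cdot) = \widetilde\nabla(\cdot) - \calP_{\T^{\perp}_{\mX_i^k}}(\widetilde\nabla(\cdot))$, and handle three pieces: (i) the signal piece, using that $\mZ \mapsto \sum_{j\in\calA_i}\|\mZ - \mX_j^k\|_F$ is convex with $\widetilde\nabla g(\mX_i^k)$ a subgradient at $\mX_i^k$, so $\langle \widetilde\nabla g(\mX_i^k), E_i\rangle \ge \sum_{j\in\calA_i}(\|\mX_i^k - \mX_j^k\|_F - \|\mX_j^k - \mId\|_F) \ge |\calA_i|\,a_i - 2\sum_{j\in\calA_i}\|\mX_j^k - \mId\|_F$, then bounding $\sum_{j\in\calA_i}\|\mX_j^k - \mId\|_F \le \sqrt{|\calA_i|}\,\|\mX^k - \mX^\star\|_F \le \sqrt{|\calA_i|}\,\xi_k$ by Cauchy--Schwarz, with $|\calA_i| = \Theta(npq)$ from \Cref{lem:concentration of sets}, $\xi_k = \calO(\sqrt{npq})\,\delta_k = \calO(\sqrt{npq})\,a_i$, and $\xi_0 = \calO(\sqrt{npq}\,\delta_0)$ with a small enough constant so that this cross-term is absorbed into $|\calA_i|\,a_i$; (ii) the outlier piece, invoking \Cref{lem:con_of_X_OX} for $\|\widetilde\nabla_{\calR} h(\mX_i^k)\|_F \le 5\sqrt{2\delta_0}\,nq \le \epsilon\, npq$ (since $\delta_0 = \calO(p^2)$), hence $|\langle \widetilde\nabla_{\calR} h(\mX_i^k), E_i\rangle| \le \epsilon\, npq\, a_i$; (iii) the normal-space corrections, using $\|\calP_{\T^{\perp}_{\mX_i^k}}(E_i)\|_F \le \tfrac12 a_i^2$ and $\|\widetilde\nabla f(\mX_i^k)\|_F \le |\calE_i| = \calO(nq)$, which makes them $\calO(nq\,\delta_k)\,a_i = \calO(np^2q)\,a_i$ because $\delta_k = \calO(p^2)$. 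Combining (i)--(iii) with all constants chosen small produces the displayed per-block regularity inequality.

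The main obstacle, relative to the clean Euclidean contraction in \Cref{lem:contraction_F}, is precisely item (i): one needs a \emph{block-wise} lower bound on $\langle \widetilde\nabla_{\calR} f(\mX_i^k), \mX_i^k - \mX_i^\star\rangle$ rather than the aggregated $\dist_1$ bound of \Cref{cor:bound_sub}, and the ``reach'' of $\widetilde\nabla g(\mX_i^k)$ to its $\Theta(npq)$ true neighbors introduces the cross-term $\sum_{j\in\calA_i}\|\mX_j^k - \mX_j^\star\|_F$. This is controllable only because the induction carries the invariant $\mX^k \in \mathcal{N}_F^k$ with $\xi_0 = \calO(\sqrt{npq}\,\delta_0)$: after Cauchy--Schwarz over the $\Theta(npq)$ neighbors, this is exactly the scaling that makes the cross-term of the same order as the ``self'' term $|\calA_i|\,a_i$, with a constant we can make arbitrarily small. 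A secondary technical point is \Cref{lem:con_of_X_OX} itself, where the randomness of the outliers must be exploited even though $\mX^k$ depends on them; this is handled via the concentration estimates already developed for \Cref{prop:hx} (the matrix $\mZ$ and its operator-norm bound).
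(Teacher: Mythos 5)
Your proof is correct, but it takes a genuinely different route from the paper's. Both arguments split the blocks into ``near'' ones, for which the crude triangle-inequality bound $\|\mX_i^{k+1}-\mId\|_F\le a_i+\mu_k\|\widetilde\nabla_{\calR}f(\mX_i^k)\|_F+\calO(\mu_k^2(npq)^2)$ already gives $\delta_{k+1}$, and ``far'' ones, where the work lies (the paper partitions with thresholds $\delta_k/4$ and $3\delta_k/4$ into $\mathcal{I}_1,\mathcal{I}_2,\mathcal{I}_3$; you use a single cutoff $\delta_k/2$). For the far blocks the paper exhibits the dominant piece of the Riemannian subgradient as an approximate scalar multiple of the error direction, $\widetilde\nabla_{\calR}g^1(\mX_i)\approx\sigma(\mX_i-\mId)$ with $\sigma=\sum_{j\in\calA_i\cap\mathcal{I}_1}\|\mX_i-\mX_j\|_F^{-1}=\Theta(npq/\delta_k)$, and reads off the contraction from the prefactor $(1-\mu_k\sigma)$; the $\ell_2$ invariant $\mX^k\in\mathcal{N}_F^k$ enters through the counting bound $|\mathcal{I}_2\cup\mathcal{I}_3|\le 16\xi_k^2/\delta_k^2=\calO(npq)$, which guarantees that all but $\calO(\epsilon npq)$ of the inlier neighbors lie in $\mathcal{I}_1$ and hence are much closer to $\mId$ than $\mX_i$ is. You instead prove a \emph{per-block} weak-sharpness inequality $\langle\widetilde\nabla_{\calR}f(\mX_i^k),\mX_i^k-\mId\rangle\ge\tfrac14 npq\,a_i$ and then mimic the squared-distance descent of \Cref{lem:contraction_F} block-wise; the $\mathcal{N}_F^k$ invariant enters through Cauchy--Schwarz, $\sum_{j\in\calA_i}\|\mX_j^k-\mId\|_F\le\sqrt{|\calA_i|}\,\xi_k$, which plays the same role as the paper's set-counting and likewise forces the constant in $\xi_0=\calO(\sqrt{npq}\,\delta_0)$ to be small. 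Both routes lean on \Cref{lem:con_of_X_OX} for the outlier term and on $\delta_0=\calO(p^2)$ to suppress the normal-space correction. Your version is arguably cleaner: it sidesteps the three-way partition, never divides by $\|\mX_i-\mX_j\|_F$ (so there is no need to rule out the non-differentiable case for inlier neighbors), and makes the structural parallel with the $\mathcal{N}_F$ contraction explicit, at the cost of obscuring the geometric fact the paper's computation highlights, namely that the subgradient is nearly aligned with $\mX_i-\mX_i^\star$.
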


\begin{proof}
As stated at the beginning of \Cref{appen:weak sharpness}, we can assume without loss of generality that $\mR^\star=\mId$ and $\mX_i^{\star}=\bm{I}$ for all $1\leq i\leq n$. We divide the index set $[n]$ into three sets
	\begin{align*}
		\mathcal{I}_1=\left\{i\mid \|\mX_i^k-\mId\|_F\leq\frac{\delta_k }{4}\right\}, \quad \mathcal{I}_2=\left\{i\mid \frac{\delta_k}{4} < \|\mX_i^k-\mId\|_F\leq \frac{3\delta_k}{4}\right\}, \\ \text{and}\quad\mathcal{I}_3=\left\{i\mid \frac{3\delta_k}{4} < \|\mX_i^k-\mId\|_F\leq\delta_k\right\}.
	\end{align*}
	For any $i\in\mathcal{I}_1 \bigcup \mathcal{I}_2$, we have
\e
 \begin{aligned}
	&\|\mX_i^k-\mu_k\widetilde{\nabla}_{\calR}f(\mX_i^k)-\mId\|_F\leq \|\mX_i^k-\mId\|+\mu_k\|\widetilde{\nabla}_{\calR}f(\mX_i^k)\|\leq \frac{3\delta_k}{4}+2\mu_k npq  \leq \delta_{k+1},
 \end{aligned}
\ee
where the last inequality holds because we choose $\mu_k = \calO(\frac{ \delta_k}{n}) \leq \frac{ \delta_k}{16n }$. 

It remains to consider the case $i\in \mathcal{I}_3$. Firstly, it is easy to see that 
\[
\dist(\mX^k,\mX^\star)^2 \geq \sum_{i \in \mathcal{I}_2 \bigcup \mathcal{I}_3} \| \mX^k -\mX^\star(\delta_k)\|_F^2 \geq \frac{\delta_k^2}{16} |\mathcal{I}_2 \bigcup \mathcal{I}_3|.
\]
Note that we have $|\mathcal{I}_2 \bigcup \mathcal{I}_3| \leq \frac{\dist(\mX^k,\mX^\star)^2} {\delta_k^2/16}\leq  \frac{16 \xi_k^2 }{\delta_k^2} = \calO(npq)$ according to the assumption $\xi_0 = \calO(\sqrt{npq}\delta_0)$. Hence, for any $i\in\mathcal{I}_3$, we have
	\e\label{eq:subgrad I3}
	\widetilde \nabla f(\mX_i)=\sum_{j\in\mathcal{A}_i \cap \mathcal{I}_1}\frac{\mX_i -\mX_j}{ \|\mX_i-\mX_j\|_F} +\sum_{j\in\mathcal{A}_i\setminus \mathcal{I}_1}	\frac{\mX_i -\mX_j}{ \|\mX_i-\mX_j\|_F}+\widetilde \nabla h(\mX_i).
	\ee
 Since $|\mathcal{I}_2  \bigcup \mathcal{I}_3| =  \calO(npq)$, we can choose $\xi_0$ properly such that $|\mathcal{I}_2  \bigcup \mathcal{I}_3| \leq \epsilon npq$. Then, we have
\[
 |\mathcal{A}_i \setminus\mathcal{I}_1| \leq |\mathcal{I}_2  \bigcup \mathcal{I}_3| \leq \epsilon npq \quad \text{and } \quad |\mathcal{A}_i \cap \mathcal{I}_1| \geq |\mathcal{A}_i| - |\mathcal{A}_i \setminus\mathcal{I}_1| \geq  (1 - 2\epsilon) npq.
\]
Let us use $\widetilde \nabla g^1(\mX_i)$ to denote the first term on the RHS of \eqref{eq:subgrad I3}. We have
\e
 \begin{aligned}
	\widetilde \nabla g^1(\mX_i) = \sum_{j\in\mathcal{A}_i \cap \mathcal{I}_1}\frac{\mX_i -\mX_j}{ \|\mX_i-\mX_j\|_F}&=\sum_{j\in\mathcal{A}_i \cap \mathcal{I}_1}\frac{\mX_i -\mId}{ \|\mX_i-\mX_j\|_F}+\frac{\mId -\mX_j}{ \|\mX_i-\mX_j\|_F} \\
 &=\sigma(\mX_i -\mId)+\sum_{j\in\mathcal{A} \cap \mathcal{I}_1}\frac{\mId -\mX_j}{ \|\mX_i-\mX_j\|_F},
 \end{aligned}
\ee
where $\sigma=\sum_{j\in\mathcal{A}_i\cap \mathcal{I}_1}\frac{1}{ \|\mX_i-\mX_j\|_F}$. For the last term in the above equation, we have
\[
 \left\|\sum_{j\in\mathcal{A}_i \cap \mathcal{I}_1}\frac{\mId -\mX_j}{ \|\mX_i-\mX_j\|_F} \right\|_F \leq \sum_{j\in\mathcal{A}_i \cap \mathcal{I}_1} \frac{\|\mId -\mX_j\|_F}{ \|\mX_i-\mX_j\|_F}   \leq \frac{\delta_k}{4} \sum_{j\in\mathcal{A}_i \cap \mathcal{I}_1} \frac{1}{ \|\mX_i-\mX_j\|_F} =  \frac{\delta_k\sigma}{4}.
\]
In addition, the projection of $\mX_i-\mId$ onto the cotangent space can be bounded as
\[
\|\calP_{\T^{\perp}_{\mX_i}} (\mX_i-\mId)\|_F=\|(\mX_i-\mId)\|_F^2/2\leq\delta_k^2/2,
\]
which implies that $\widetilde \nabla_{\calR} g^1(\mX_i) = \calP_{\T_{\mX_i}}(\widetilde \nabla g^1(\mX_i))$ satisfies 
\[
\|\widetilde \nabla_{\calR} g^1(\mX_i) - \sigma(\mX_i - \mId) \|_F \leq \frac{\delta_k\sigma}{4} + \frac{\delta_k^2\sigma}{2} \leq \frac{\delta_k\sigma}{2}. 
\]
Then, the fact $\widetilde \nabla_{\calR}  f(\mX_i)=\widetilde \nabla_{\calR} g^1(\mX_i) +\calP_{\T^{\perp}_{\mX_i}} \left(\sum_{j\in\mathcal{A}_i\setminus \mathcal{I}_1}	\frac{\mX_i -\mX_j}{ \|\mX_i-\mX_j\|_F} \right) +\widetilde \nabla_{\calR} h(\mX_i)$ implies
\begingroup
\allowdisplaybreaks
\begin{align*}
    \|	\widetilde \nabla_{\calR}  f(\mX_i) - \sigma(\mX_i - \mId)\|_F &\leq \|\widetilde \nabla_{\calR} g^1(\mX_i) - \sigma(\mX_i - \mId) \|_F + |\mathcal{A}_i \setminus\mathcal{I}_1| + \left\| \widetilde \nabla_{\calR} h(\mX_i)\right\|_F \\
    &\leq  \frac{\delta_k\sigma}{2} + \epsilon npq  + 5 \sqrt{2\delta_0}n q.
\end{align*}
\endgroup
Next, motivated by the update of $\ReSync$, we can construct
\[
\mX_i^k-\mu_k\widetilde{\nabla}_{\calR}f(\mX_i^k)-\mId = (1-\mu_k\sigma)(\mX_i^k-\mId) + \mu_k(\widetilde \nabla_{\calR}  f(\mX_i) - \sigma(\mX_i - \mId)),
\]
 which implies
 \begingroup
\allowdisplaybreaks
\begin{equation}\label{eq:dist I3}
	\begin{aligned}
		\|\mX_i^k-\mu_k\widetilde{\nabla}_{\calR}f(\mX_i^k)-\mId\|_F\leq& (1-\mu_k\sigma)\|\mX_i^k-\mId\|+\mu_k\left(\frac{\delta_k\sigma}{4} + \frac{\delta_k^2\sigma}{2} + \epsilon npq  + 5 \sqrt{2\delta_0}n q \right)\\
		\leq & (1-\mu_k\sigma)\delta_k+\mu_k\left( \frac{\delta_k\sigma}{2} + \epsilon npq  + 5 \sqrt{2\delta_0}n q \right) \\
  =& \delta_k - \mu_k\left( \frac{\delta_k\sigma}{2}  - \epsilon npq  - 5 \sqrt{2\delta_0}n q \right).
	\end{aligned}
\end{equation}
\endgroup
In order to further upper bound the above inequality, we can compute
\[
\begin{aligned}
\sigma=\sum_{j\in\mathcal{A}_i\cap \mathcal{I}_1}\frac{1}{ \|\mX_i-\mX_j\|_F} & \geq \sum_{j\in\mathcal{A}_i \cap \mathcal{I}_1}\frac{1}{ \|\mX_i-\mId\|_F+\|\mX_j-\mId\|_F} \\
&\geq\frac{4}{5\delta_k} |\mathcal{A}_i\cap \mathcal{I}_1| = \frac{4(1-2\epsilon)npq}{5\delta_k}.
\end{aligned}
\]
 where the second inequality holds because $\|\mX_i-\mId\|_F+\|\mX_j-\mId\|_F \leq 5\delta_k / 4$ as $j \in \mathcal{I}_1$. It implies
 $$
 \frac{\delta_k\sigma}{2}  - \epsilon npq  - 5 \sqrt{2\delta_0}n q \geq \frac{2(1-2\epsilon)npq}{5} - \epsilon npq  - 5 \sqrt{2\delta_0}n q \geq \frac{npq}{4}.
 $$
 By plugging the above bound into \eqref{eq:dist I3} and ignoring the high-order error term in \eqref{eq:second-order boundedness}, we complete the proof.
\end{proof}
 
Finally, we present \Cref{lem:con_of_X_OX} and its proof.
\begin{lem}\label{lem:con_of_X_OX}
	With high probability, the following holds for all $\mX^0 \in \mathcal{N}_\infty^0$:
	\begin{align}\label{ineq:oij}
		\left\| \widetilde \nabla_{\calR} h(\mX_i)\right\|_F\leq 5 \sqrt{2\delta_0}n q \quad \forall \, i \in [n].
	\end{align}
\end{lem}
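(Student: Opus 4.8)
As in the rest of Appendix B, I would first normalize to $\mX_i^\star=\mId$ for all $i$ (so the $\mO_{ij}$ are Haar on $\so(d)$, and on $\mathcal N_\infty^0$ we have $\max_i\|\mX_i-\mId\|_F\le\delta_0$), and then rewrite the Riemannian subgradient of the outlier part in a form that exposes the outlier randomness. Using $\calP_{\T_{\mX_i}}(\mB)=\mX_i\,\mathrm{skew}(\mX_i^\top\mB)$ with $\mathrm{skew}(\mA):=(\mA-\mA^\top)/2$, the isometry $\|\mX_i\mX_j^\top-\mO_{ij}\|_F=\|\mId-\mX_i^\top\mO_{ij}\mX_j\|_F$, and left-multiplication by $\mX_i^\top$, one obtains
\[
\bigl\|\widetilde\nabla_\calR h(\mX_i)\bigr\|_F=\Bigl\|\sum_{j\in\calA_i^c}\frac{\mathrm{skew}(\mId-\mM_j)}{\|\mId-\mM_j\|_F}\Bigr\|_F,\qquad \mM_j:=\mX_i^\top\mO_{ij}\mX_j\in\so(d),
\]
where any term whose denominator vanishes is replaced by an arbitrary skew matrix of Frobenius norm $\le1$ (such a $j$ necessarily lands in $B_i$ below). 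The two useful features of this form are: (i) each summand has Frobenius norm at most $1$, since $\|\mathrm{skew}(\cdot)\|_F\le\|\cdot\|_F$; and (ii) at the reference point $\mX=\mX^\star$, i.e.\ $\mM_j=\mO_{ij}$, the summand equals $\frac{\mO_{ij}^\top-\mO_{ij}}{2\|\mId-\mO_{ij}\|_F}$ and has mean $\bm{0}$, because $\mO\mapsto\mO^\top$ preserves Haar measure and the scalar $\|\mId-\mO\|_F$ while negating $\mathrm{skew}(\mId-\mO)$. Equivalently, in the notation of the proof of \Cref{prop:hx}, $\mathrm{skew}(\mA)=\bm{0}$ and $\sum_{j\in\calA_i^c}\frac{\mathrm{skew}(\mId-\mO_{ij})}{\|\mId-\mO_{ij}\|_F}=\mathrm{skew}\bigl(\sum_{1\le j\le n}\mZ_{ij}\bigr)$.

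\textbf{The main obstacle, and the decoupling that removes it.} The bound must hold \emph{uniformly} over the continuum $\mathcal N_\infty^0$, so a pointwise concentration estimate is not enough and a naive $\epsilon$-net over the $n$-fold product $\prod_i(B(\mId,\delta_0)\cap\so(d))$ is too large. The plan is to compare each summand to its value at $\mX^\star$ and push \emph{all} of the $\mX$-dependence into a benign scalar coefficient. Write $\mN_j:=\mId-\mM_j$ and $\mN_j^0:=\mId-\mO_{ij}$; since $\dist_\infty(\mX,\mX^\star)\le\delta_0$ we get $\|\mN_j-\mN_j^0\|_F=\|\mX_i\mO_{ij}-\mO_{ij}\mX_j\|_F\le\|\mX_i-\mId\|_F+\|\mId-\mX_j\|_F\le2\delta_0$. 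Fix a threshold $\tau$ equal to a suitable constant multiple of $\delta_0$ and split $\calA_i^c=B_i\cup G_i$ with $B_i:=\{j\in\calA_i^c:\|\mN_j^0\|_F\le\tau\}$. Along the segment from $\mN_j$ to $\mN_j^0$ the Frobenius norm stays $\ge\|\mN_j^0\|_F-2\delta_0$, and $\mN\mapsto\mathrm{skew}(\mN)/\|\mN\|_F$ has Lipschitz constant $\le2/\|\mN\|_F$, so for $j\in G_i$ the perturbation is $\le\calO(\delta_0)/\|\mN_j^0\|_F$, while for $j\in B_i$ we simply bound both the $\mX$-summand and the $\mX^\star$-summand by $1$. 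This gives
\[
\bigl\|\widetilde\nabla_\calR h(\mX_i)\bigr\|_F\;\le\;\underbrace{\Bigl\|\sum_{j\in\calA_i^c}\frac{\mathrm{skew}(\mN_j^0)}{\|\mN_j^0\|_F}\Bigr\|_F}_{T_1}\;+\;2\,|B_i|\;+\;C\,\delta_0\sum_{j\in G_i}\frac{1}{\|\mN_j^0\|_F},
\]
and the right-hand side no longer involves $\mX$. Hence it suffices to bound it with probability $1-\calO(1/n)$ and take a union bound over $i\in[n]$; uniformity over $\mathcal N_\infty^0$ then comes for free. I expect this decoupling to be the crux of the proof.

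\textbf{The three concentration estimates, and conclusion.} All three quantities depend only on the outliers and the graph. For $T_1$: $T_1=\|\mathrm{skew}(\sum_j\mZ_{ij})\|_F\le\|\sum_j\mZ_{ij}\|_F\le\sqrt{8n(1-p)q\log n}$ by \eqref{eq:matrix concentration bound}, already established in the proof of \Cref{prop:hx}. For $|B_i|$: since $\mO_{ij}$ is Haar, $\Pr[\|\mId-\mO_{ij}\|_F\le\tau]=\calO(\tau^{\dim\so(d)})=\calO(\tau)$ (Haar mass of a small ball near the identity, using $\dim\so(d)\ge1$ and $\tau<1$), so $|B_i|$ is stochastically dominated by a $\mathrm{Binom}(n,\calO(q\tau))$ variable, and a Bernstein/Chernoff bound gives $|B_i|=\calO(nq\tau+\log n)$ w.h.p.\ (union over $i$). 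For the last sum, bounding termwise by $1/\tau$ is too lossy, so I would instead use $\mathbb E\bigl[\tfrac{\bm{1}\{\|\mId-\mO\|_F>\tau\}}{\|\mId-\mO\|_F}\bigr]=\calO(1)$ when $\dim\so(d)\ge2$ and $\calO(\log(1/\tau))$ when $d=2$, together with $|\calE_i|=\calO(nq)$ (\Cref{lem:concentration of sets}) and a Bernstein bound for the summands (which are bounded by $1/\tau$), yielding $\sum_{j\in G_i}\|\mN_j^0\|_F^{-1}=\calO(nq)$ for $d\ge3$ and $\calO(nq\log(1/\delta_0))$ for $d=2$, w.h.p.\ (union over $i$). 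Plugging $\tau=\Theta(\delta_0)$ and $\delta_0=\calO(p^2)$ into the three bounds and invoking the standing assumption $p^2q^2=\Omega(\log n/n)$ — which forces $\sqrt{n(1-p)q\log n}$, $\log n$, and $\delta_0\log(1/\delta_0)\,nq$ to all be $\calO(\sqrt{\delta_0}\,nq)$ once $\delta_0$ is below a dimension-dependent absolute constant — gives $\|\widetilde\nabla_\calR h(\mX_i)\|_F\le5\sqrt{2\delta_0}\,nq$, as claimed. The only delicate bookkeeping is the $d=2$ contribution of the last term: one must check that the logarithmic blow-up of the reciprocal-norm integral is absorbed by $\sqrt{\delta_0}$, which holds because $\delta_0\log(1/\delta_0)=o(\sqrt{\delta_0})$ as $\delta_0\to0$.
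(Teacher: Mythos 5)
Your proposal is correct and reaches the stated bound, but it follows a genuinely different route from the paper's proof, and I'll flag the tradeoffs. Both arguments share the same skeleton: pull the tangent projection inside, reduce to a sum of unit-norm terms indexed by $\calA_i^c$, replace the $\mX$-dependent orientation $\mX_i^\top\mO_{ij}\mX_j$ by $\mO_{ij}$ at a cost controlled by $\dist_\infty(\mX,\mX^\star)\le\delta_0$, split off the set of $j$ for which $\mO_{ij}$ is close to $\mId$ (where the replacement breaks down), and then concentrate the resulting $\mX$-independent random sum. The differences are in two choices. First, you pass to the $\mathrm{skew}$ part up front, so that the mean of the good-$j$ sum is immediately $\bm{0}$ (since $\mathrm{skew}(c(d)\mId)=\bm{0}$); the paper keeps the Euclidean subgradient, concentrates it around $c(d)\mId$ via matrix Bernstein and \cite[Lemma A.1]{wang2013exact}, and only kills the $c(d)\mId$ direction at the very end using $\calP_{\T_{\mX_i}}(\mX_i)=\bm{0}$. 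Both are fine; your version makes the centering explicit a step earlier. Second — and this is the more substantive divergence — you take the ``bad'' threshold $\tau=\Theta(\delta_0)$ and bound the good-$j$ perturbation pointwise by $\calO(\delta_0)/\|\mId-\mO_{ij}\|_F$ via a Lipschitz estimate, which forces you to control $\sum_{j\in G_i}\|\mId-\mO_{ij}\|_F^{-1}$ and to case-split on $d$ because $\mathbb{E}[1/\|\mId-\mO\|_F]$ diverges for $d=2$. The paper instead takes the threshold at $\sqrt{2\delta_0}$, so that for good $j$ the simple two-term triangle-inequality argument gives a \emph{uniform} perturbation bound of $2\sqrt{2\delta_0}$ — no Lipschitz segment argument, no reciprocal-moment estimate, and no dimension-dependent bookkeeping. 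That choice also balances the two contributions ($|\Phi_i|\cdot 1$ and $|G_i|\cdot 2\sqrt{2\delta_0}$ are both of order $\sqrt{\delta_0}\,nq$), which is exactly why $5\sqrt{2\delta_0}\,nq$ comes out cleanly. Your version is sharper for $d\ge 3$ in the sense that the good-$j$ contribution is $\calO(\delta_0 nq)$ rather than $\calO(\sqrt{\delta_0}\,nq)$, but that gain is then thrown away since $T_1$ and the bad set dominate anyway, and it costs you the $d=2$ special case and the segment-Lipschitz bound (which, to be rigorous, needs the observation that the min of $\|\cdot\|_F$ along the segment from $\mN_j$ to $\mN_j^0$ is at least $\|\mN_j^0\|_F-2\delta_0\ge\|\mN_j^0\|_F/2$ once $\tau\ge 4\delta_0$). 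If you want the shortest path to the stated constant, the square-root threshold is the cleaner move.
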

\begin{proof}[Proof of \Cref{lem:con_of_X_OX}]
Firstly, define $\tilde{\mO}_{ij}=\mO_{ij}\mX_j\mX_i^\top$, then 
$$
\widetilde \nabla h(\mX_i) = \sum_{j\in\mathcal{E}_i/\mathcal{A}_i}\frac{\mX_i -\mO_{ij}\mX_j}{ \|\mX_i-\mO_{ij}\mX_j\|_F} = \sum_{j\in\mathcal{E}_i/\mathcal{A}_i}\frac{\mId -\mO_{ij}\mX_j\mX_i^\top}{ \|\mX_i-\mO_{ij}\mX_j\|_F} \mX_i = \sum_{j\in\mathcal{E}_i/\mathcal{A}_i} \frac{\mId -\tilde{\mO}_{ij}}{ \|\mId-\tilde{\mO}_{ij}\|_F} \mX_i
$$
The fact that $\mX \in \mathcal{N}_\infty^0$ implies that $\|\mX_i-\mX_j\|_F\leq 2\delta_0$, i.e., $\|\mId-\mX_j\mX_i^\top\|_F\leq 2\delta_0$. For any $\|\mO_{ij}-\mId\|\geq \sqrt{2\delta_0}$, we have
\begingroup
\allowdisplaybreaks
	\begin{align*}
		\left\|\frac{\mId -\tilde{\mO}_{ij}}{ \|\mId-\tilde{\mO}_{ij}\|_F}-\frac{\mId -\mO_{ij}}{ \|\mId-\mO_{ij}\|_F}\right\|_F &\leq \left\|\frac{\mId -\tilde{\mO}_{ij}}{ \|\mId-\tilde{\mO}_{ij}\|_F}-\frac{\mId -\tilde{\mO}_{ij}}{ \|\mId-\mO_{ij}\|_F}\right\|_F \\
  &\quad +\left\|\frac{\mId -\tilde{\mO}_{ij}}{ \|\mId-\mO_{ij}\|_F}-\frac{\mId -\mO_{ij}}{ \|\mId-\mO_{ij}\|_F}\right\|_F
		\leq 2\sqrt{2\delta_0}
	\end{align*}
 \endgroup
 where the last inequality holds because 
 \begingroup
\allowdisplaybreaks
 \begin{align*}
     \left\|\frac{\mId -\tilde{\mO}_{ij}}{ \|\mId-\tilde{\mO}_{ij}\|_F}-\frac{\mId -\tilde{\mO}_{ij}}{ \|\mId-\mO_{ij}\|_F}\right\|_F &=  \|\mId -\tilde{\mO}_{ij}\|_F\left|\frac{1}{ \|\mId-\tilde{\mO}_{ij}\|_F}-\frac{1}{ \|\mId-\mO_{ij}\|_F}\right| \\
     &= \left|1-\frac{\|\mId -\tilde{\mO}_{ij}\|_F}{ \|\mId-\mO_{ij}\|_F}\right| \\
     &= \frac{ \left|\|\mId-\mO_{ij}\|_F - \|\mId -\tilde{\mO}_{ij}\|_F\right|}{ \|\mId-\mO_{ij}\|_F} \leq \frac{ \|\mO_{ij} -\tilde{\mO}_{ij}\|_F}{ \|\mId-\mO_{ij}\|_F} \\
     &= \frac{ \|\mId -\mX_j\mX_i^\top\|_F}{ \|\mId-\mO_{ij}\|_F} 
     \leq  \sqrt{2\delta_0},
 \end{align*}
 \endgroup
 and 
 $$
 \left\|\frac{\mId -\tilde{\mO}_{ij}}{ \|\mId-\mO_{ij}\|_F}-\frac{\mId -\mO_{ij}}{ \|\mId-\mO_{ij}\|_F}\right\|_F = \frac{ \|\mO_{ij} -\tilde{\mO}_{ij}\|_F}{ \|\mId-\mO_{ij}\|_F} = \frac{ \|\mId -\mX_j\mX_i^\top\|_F}{ \|\mId-\mO_{ij}\|_F}
     \leq \sqrt{2\delta_0}.
 $$
Let $\Phi_i=\{j\in\mathcal{E}_i/\mathcal{A}_i\mid\|\mO_{ij}-\mId\|\leq \sqrt{2\delta_0} \}$. According to the fact that $|\mathcal{E}_i/\mathcal{A}_i|\leq (1+\epsilon)nq$ and the randomness of $\mO_{ij}$, it is easy to show that $|\Phi_i|\leq \sqrt{2\delta_0} nq$ hold for all $1\leq i\leq n$ with high probability. Thus, by splitting the sum $\sum_{j\in\mathcal{E}_i/\mathcal{A}_i}$ in to two parts: $j \in \Phi_i$ and $j \notin \Phi_i$, we have
	\e\label{ineq:oij}
	\left\|\sum_{j\in\mathcal{E}_i/\mathcal{A}_i} \frac{\mId -\tilde{\mO}_{ij}}{ \|\mId-\tilde{\mO}_{ij}\|_F} -\sum_{j\in\mathcal{E}_i/\Omega_i}\frac{\mId -\mO_{ij}}{ \|\mId-\mO_{ij}\|_F} \right\|_F\leq 4 \sqrt{2\delta_0} nq.
	\ee
	Besides, since $\mO_{ij}$ is uniformly distributed on $\so(d)$, according to Lemma A.1 in \cite{wang2013exact}, we know that $\mathbb{E}\left\{\frac{\mId -\mO_{ij}}{ \|\mId-\mO_{ij}\|_F}\right\}=c(d)\mId$. Then, the matrix Bernstein's inequality \cite{tropp2015introduction} tells us that, with high probability,
	\e
	\left\|\sum_{j\in\mathcal{E}_i/\Omega_i}\frac{\mId -\mO_{ij}}{\|\mId-\mO_{ij}\|_F}-|\mathcal{E}_i/\Omega_i|\cdot c(d)\mId\right\|_F\leq \sqrt{2\delta_0} nq.
	\ee
	This, together with \eqref{ineq:oij}, implies that
 $$
 \left\|\sum_{j\in\mathcal{E}_i/\mathcal{A}_i} \frac{\mId -\tilde{\mO}_{ij}}{ \|\mId-\tilde{\mO}_{ij}\|_F} - |\mathcal{E}_i/\Omega_i|\cdot c(d)\mId \right|_F\leq 5  \sqrt{2\delta_0} nq.
 $$
 The fact that $\widetilde \nabla h(\mX_i) = \sum_{j\in\mathcal{E}_i/\mathcal{A}_i} \frac{\mId -\tilde{\mO}_{ij}}{ \|\mId-\tilde{\mO}_{ij}\|_F} \mX_i$ implies that
	\e\label{ineq:partialfg}
	\left\|\widetilde \nabla h(\mX_i)-|\mathcal{E}_i/\Omega_i|\cdot c(d)\mX_i\right|_F\leq 5  \sqrt{2\delta_0} nq.
	\ee
 We complete the proof by taking the projection operator $\widetilde \nabla_{\calR} h(\mX_i) = \calP_{\T_{\mX_i}}( \widetilde \nabla h(\mX_i) )$ and the fact that $\calP_{\T_{\mX_i}}( \mX_i) = 0$.
\end{proof}

\end{document}